\newtheorem{theorem}{Theorem}
\newtheorem{lemma}[theorem]{Lemma}
\newtheorem{proposition}[theorem]{Proposition}
\newtheorem{corollary}[theorem]{Corollary}
\theoremstyle{definition}
\newtheorem{remark}[theorem]{Remark}
\newtheorem{definition}[theorem]{Definition}
\newtheorem{example}[theorem]{Example}
\begin{document}
	
	\begin{frontmatter}
		
		%% Title, authors and addresses
		
		%% use the tnoteref command within \title for footnotes;
		%% use the tnotetext command for theassociated footnote;
		%% use the fnref command within \author or \address for footnotes;
		%% use the fntext command for theassociated footnote;
		%% use the corref command within \author for corresponding author footnotes;
		%% use the cortext command for theassociated footnote;
		%% use the ead command for the email address,
		%% and the form \ead[url] for the home page:
		%% \title{Title\tnoteref{label1}}
		%% \tnotetext[label1]{}

		\author[label1]{Mohammed Essalih Benjrada \corref{c1}} %\fnref{thanks1}}
	\fntext[label1]{Department of Economics, University of Bergamo, Italy}
	\ead{mohammedessalih.benjrada@unibg.it}
	
	%\author[label1]{Tommaso Lando} %\fnref{thanks}}
%\fntext[label1]{Department of Economics, University of Bergamo, Italy}
%\ead{tommaso.lando@unibg.it}

%\author[label2]{Paulo Eduardo Oliveira} %\fnref{thanks1}} %etc.
%\fntext[label2]{CMUC, Department of Mathematics, University of Coimbra, Portugal}
%\ead{paulo@mat.uc.pt}

%% \address{Address\fnref{label3}}
%% \fntext[label3]{}

\title{Hazard Rate for Associated Data in Deconvolution Problems: Asymptotic Normality}%\tnoteref{thanktitle}\tnoteref{thanktitle1}}

%% use optional labels to link authors explicitly to addresses:
%% \author[label1,label2]{}
%% \address[label1]{}
%% \address[label2]{}

%\tnotetext[thanks]{Tommaso Lando was supported by the Italian funds ex MURST 60\% 2019, by the Czech Science Foundation (GACR) under project 20-16764S and moreover by SP2020/11, an SGS research project of V\v{S}B-TU Ostrava. The support is greatly acknowledged.}
%\cortext[thanks1]{Tommaso Lando was supported by the Italian funds ex MURST 60\% 2019, by the Czech Science Foundation (GACR) under project 20-16764S and moreover by SP2020/11, an SGS research project of V\v{S}B-TU Ostrava. The support is greatly acknowledged.}

%\tnotetext[thanks1]{Idir Arab and Paulo Eduardo Oliveira were partially supported by the Centre for Mathematics of the University of Coimbra - UIDB/00324/2020, funded by the Portuguese Government through FCT/MCTES.}
%% use optional labels to link authors explicitly to addresses:
%% \author[label1,label2]{}
%% \address[label1]{}
%% \address[label2]{}

%\author[1]{Idir Arab}
%\author[2]{Milto Hadjikyriakou}
%\author[1]{Paulo Eduardo Oliveira\corref{c1}}

\cortext[c1]{Corresponding author}

%\address[1]{CMUC, Department of Mathematics, University of Coimbra, Portugal}
%\address[2]{University of Central Lancashire, Cyprus}

\begin{abstract}
In reliability theory and survival analysis, observed data are often weakly dependent and subject to additive measurement errors. Such contamination arises when the underlying data are neither independent nor strongly mixed but instead exhibit association. This paper focuses on estimating the hazard rate by deconvolving the density function and constructing an estimator of the distribution function. We assume that the data originate from a strictly stationary sequence satisfying association conditions. Under appropriate smoothness assumptions on the error distribution, we establish the quadratic-mean convergence and asymptotic normality of the proposed estimators. The finite-sample performance of both the hazard rate and distribution function estimators is evaluated through a simulation study. We conclude with a discussion of open problems and potential future research directions.

\end{abstract}

%%Research highlights

\begin{keyword}
Hazard rate\sep Deconvolution\sep Asymptotic Normality \sep Positively Associated
%% keywords here, in the form: keyword \sep keyword

%% PACS codes here, in the form: \PACS code \sep code

%% MSC codes here, in the form: \MSC code \sep code
%% or \MSC[2008] code \sep code (2000 is the default)

\end{keyword}

\end{frontmatter}

%% \linenumbers

%% main text\section{Introduction} % Initial capital letter, then lower case. No full stop.

\section{Introduction}
In reliability theory and survival analysis, estimating the hazard rate is a fundamental objective. The hazard rate is closely associated with \textit{stochastic aging conditions}, which characterize the general behavior of lifetime distributions. Specifically, the concepts of \textit{increasing hazard rate} (IHR) and \textit{decreasing hazard rate} (DHR) correspond to \textit{positive aging} and \textit{negative aging}, respectively. These notions describe whether aging has an adverse or beneficial effect on lifespan.  

\begin{itemize}
	\item In the IHR case, aging negatively impacts longevity, meaning that the risk of failure increases over time.  
	\item Conversely, in the DHR case, aging has a positive effect, with the risk of failure decreasing over time.
\end{itemize}

Accurately estimating the hazard rate and understanding its shape is of paramount importance in reliability theory and survival analysis. However, in practice, the ideal scenario of directly observed independent and identically distributed (i.i.d.) data is rarely available. Instead, key quantities are often estimated from data that exhibit weak dependence and are subject to measurement errors. A widely used approach in this context is the \textit{convolution model}, expressed as:

\begin{equation}
	Y_i = X_i + e_i.
	\label{convolution}
\end{equation}

This model, known as the \textit{convolution model}, describes the relationship between the observed variables \( \{ Y_i \}_{i=1}^{n} \), the latent variables \( \{ X_i \}_{i=1}^{n} \), and the measurement errors \( \{ e_i \}_{i=1}^{n} \), all of which are assumed to be continuous random variables. It accounts for the presence of noise in observed data and plays a crucial role in practical estimation procedures.

Let \( G(x) \) and \( g(x) \) denote the distribution and density functions of the observed process \( \{ Y_{i} \}_{i=1}^{n} \), while \( F(x) \) and \( f(x) \) correspond to the distribution and density functions of the latent process \( \{ X_{i} \}_{i=1}^{n} \). The measurement errors \( \{ e_{i} \}_{i=1}^{n} \) are assumed to be independent and identically distributed (i.i.d.) with a fully known density function \( r(x) \). Moreover, the errors are independent of both each other and the latent variables \( \{ X_{i} \}_{i=1}^{n} \). This assumption ensures the \textit{identifiability} of the model, meaning that the contamination mechanism does not depend on the underlying events.  

Furthermore, we assume that the latent process \( \{ X_{i} \}_{i=1}^{n} \) forms a strictly stationary sequence of positively associated random variables.

\begin{definition}
	A finite family of random variables $\left\{ X_{i}\right\}_{i=1}^{n}$ is said to be \textit{positively associated} (or simply \textit{associated}) if for any two real-valued, \textit{coordinate-wise increasing} functions $\Phi_{1}(\cdot)$ and $\Phi_{2}(\cdot)$ defined on $\mathbb{R}^{n}$, the following holds:  
	
	Whenever $\mathbb{E}\left[ \Phi_{j}^{2}(\left\{ X_{i}\right\}_{i=1}^{n}) \right] < +\infty$ for $j = 1, 2$, the covariance satisfies:
	\begin{equation*}
		\text{Cov}\left( \Phi_{1}(\left\{ X_{i}\right\}_{i=1}^{n}), \Phi_{2}(\left\{ X_{i}\right\}_{i=1}^{n}) \right) \geq 0.
	\end{equation*}
\end{definition}

An infinite process $\left\{ X_{i}\right\}_{i=1}^{+\infty}$ is said to be \textit{associated} if every finite subcollection of its random variables is associated. This concept of associated random variables was introduced by \cite{Esary1967}, primarily for applications in reliability theory. The following properties of associated random variables were also established by \cite{Esary1967}:

\begin{enumerate}
	\item \textbf{Property $P_{1}$}: Any finite subcollection of associated random variables is itself associated.
	
	\item \textbf{Property $P_{2}$}: The union of two or more independent sets of associated random variables remains associated.
	
	\item \textbf{Property $P_{3}$}: A singleton set consisting of a single random variable is associated.
	
	\item \textbf{Property $P_{4}$}: Non-decreasing functions preserve association. Specifically, if $U = \left\{ U_{i}\right\}_{i=1}^{n}$ is an associated random process, then $\left\{ h_{j}\left( U\right) \right\}_{j=1}^{m}$ is also associated, where each $h_{j}(\cdot)$ is a non-decreasing function.
\end{enumerate}

The concept of association is of significant importance in various fields. For instance, in reliability theory, random variables often represent the lifetimes of components that are associated but not necessarily independent. 

To illustrate this, suppose $\left\{ X_{i}\right\}_{i=1}^{n}$ is a collection of associated random variables. Then, the observed process $\left\{ Y_{i}\right\}_{i=1}^{n}$, where $Y_{i} = X_{i} + e_{i}$ and $e_{i}$ represents an error term, is also associated. This can be shown as follows:

\begin{enumerate}
	\item By \textbf{Property $P_{3}$}, each singleton $e_{i}$ for $i = 1, \dots, n$ is associated.
	\item Since the $e_{i}$'s are independent random variables, \textbf{Property $P_{2}$} implies that the collection $\left\{ e_{i}\right\}_{i=1}^{n}$ is associated.
	\item Using \textbf{Property $P_{2}$} again, the union $\left\{ X_{i}\right\}_{i=1}^{n} \cup \left\{ e_{i}\right\}_{i=1}^{n}$ is associated.
	\item Finally, applying \textbf{Property $P_{4}$} with the non-decreasing functions $h_{j}\left( \left\{ X_{i}\right\}_{i=1}^{n} \cup \left\{ e_{i}\right\}_{i=1}^{n} \right) = X_{j} + e_{j}$ for $j = 1, \dots, n$, we conclude that $\left\{ Y_{i} = X_{i} + e_{i}\right\}_{i=1}^{n}$ is an associated random process.
\end{enumerate}

This result implies that if the lifetimes of associated components in a system are subject to measurement errors (due to experimental conditions or tools), the observed lifetimes remain associated. For further reading on the concept of association, we recommend the monographs by \cite{bulinski2007limit}, \cite{Oliveira2012}, and \cite{Rao2012}.

Such models with contaminated noise are prevalent in various domains, including biological organisms, communication theory, biostatistics, and other fields. For example, in the context of biological organisms, particularly in AIDS studies, the following scenario arises:

For the \(i^{\text{th}}\) individual:
\begin{itemize}
	\item \(Y_i\) represents the time from a starting point (e.g., initial observation) to the appearance of symptoms.
	\item \(e_i\) denotes the time from the starting point to the occurrence of infection.
	\item \(X_i\) corresponds to the time from infection to the appearance of symptoms (i.e., the incubation period).
\end{itemize}

In this case, the observed data \(\left\{ Y_{i} = X_{i} + e_{i} \right\}_{i=1}^{n}\) provides incomplete or corrupted information about the true incubation period \(X_i\), as it is influenced by the random infection time \(e_i\).

In reliability theory and related fields, the random variables \(\left\{ X_{i} \right\}_{i=1}^{n}\) often represent the lifetimes of \(n\) components or organisms. In such contexts, the hazard rate \(\lambda(x)\) plays a pivotal role, as it fully characterizes the distribution of the event under study (e.g., time to failure or death). The hazard rate function \(\lambda(x)\) is defined as:

\[
\lambda(x) := \lim_{dx \rightarrow 0} \frac{P\left( x \leq X < x + dx \mid X \geq x \right)}{dx} = \frac{f(x)}{1 - F(x)},
\]

where:
\begin{itemize}
	\item \(f(x)\) is the probability density function (PDF) of \(X\),
	\item \(F(x)\) is the cumulative distribution function (CDF) of \(X\).
\end{itemize}

The quantity \(\lambda(x) \, dx\) represents the probability that an organism or component, which is functioning at time \(x\), will fail within the infinitesimal interval \([x, x + dx]\) as \(dx \rightarrow 0\). 

The hazard rate \(\lambda(x)\) is particularly informative compared to other characterizing functions (such as the density or distribution functions). For instance, its graph can reveal key properties of the distribution, including:
\begin{itemize}
	\item The \textbf{mode(s)} (peaks of the hazard rate),
	\item \textbf{Symmetry} (behavior of the hazard rate over time),
	\item \textbf{Dispersion} (spread of the hazard rate),
	\item \textbf{Flattening} (how the hazard rate changes over time).
\end{itemize}

The estimation strategy considered in this paper is to estimate \(\lambda(x)\) using a deconvolving kernel-type density estimator, \( f_n(x) \), along with an estimator of the distribution function, \( F_n(x) \), which is obtained by integrating \( f_n(x) \). This approach naturally falls within the deconvolution framework. The deconvolution problem has been extensively studied in the literature, with most works focusing on estimating the unknown density and determining the rate of convergence under specific error structures.  

\cite{benjrada2022deconvolving} proposed an estimator for the distribution function by integrating the density estimator and examined its asymptotic normality, assuming that the measurement error's tail exhibits either a super-smooth or ordinary-smooth behavior. \cite{Fan1991a} investigated the kernel density estimator based on i.i.d. copies of the random variables \( Y_i \) and analyzed its asymptotic normality. \cite{Masry2003} extended this work to the multivariate setting, considering deconvolving density estimators constructed from associated observations, and established both their quadratic mean properties and asymptotic normality. Further foundational contributions can be found in \cite{Wise1977}, \cite{Liu1989}, \cite{Snyder1988}, among others.

Regarding the hazard rate function, \cite{Comte2018} studied the case where the random variables are subject to both censoring and measurement errors. Specifically, the measurement errors are assumed to affect both the variable of interest and the censoring variable. The authors described the model and estimation strategy in detail and derived an \( L_2 \)-risk bound for the proposed estimator.  

\cite{benjrada2022hazard} estimated the hazard rate function using a deconvolving kernel density estimator, \( f_n(x) \), and a distribution function estimator, \( F_n(x) \), obtained by integrating \( f_n(x) \). They established strong uniform consistency (with convergence rates) for \( f_n(x) \), \( F_n(x) \), and \( \lambda_n(x) \) separately.  

To the best of our knowledge, the asymptotic normality and quadratic mean convergence of the hazard rate function under the corrupted-associated model have not been established. This gap in the literature motivates the present study.  

The remainder of this paper is organized as follows:  
\begin{itemize}
	\item \textbf{Section 2} introduces the necessary notations, definitions, and assumptions.  
	\item \textbf{Section 3} presents the main theoretical results.  
	\item \textbf{Section 4} discusses future directions.  
	\item \textbf{Section 5} provides experimental studies to illustrate the estimator's performance.  
	\item \textbf{Section 6} contains the proofs of the main results.  
\end{itemize}

\section{Estimates and Assumptions}
\subsection{Estimation}
By a straightforward generalization, the hazard rate estimator  
\( \hat{\lambda}_{n}(x) \) was introduced by \cite{Watson1964} in the error-free case  
(i.e., when no measurement errors are present) as follows:  
\begin{equation}
	\hat{\lambda}_{n}(x) = \frac{\hat{f}_{n}(x)}{1 - \hat{F}_{n}(x)},
	\label{estimat}
\end{equation}
where \( \hat{f}_{n}(x) \) denotes the kernel-type density estimator, and  
\( \hat{F}_{n}(x) \) represents the empirical distribution function.

In developmental process fields, convolution models are not only of theoretical interest but also have practical implications. Ignoring measurement errors can introduce substantial bias, leading to incorrect conclusions. As a result, we do not focus on (\ref{estimat}), since \( \hat{f}_{n}(x) \) and \( \hat{F}_{n}(x) \) do not account for the presence of measurement errors.  

For an arbitrary function \( \zeta(\cdot) \), we define its corresponding Fourier transform as follows:  
\[
\phi _{\zeta }(t) := \int_{-\infty }^{+\infty } e^{- \text{i}ts} \zeta(s) \, ds.
\]  

The deconvolving kernel estimator of \( f(x) \) is given by (see \cite{Liu1989}):  
\begin{equation}  \label{fn}
	f_{n}(x) = \frac{1}{nh_{n}} \sum_{i=1}^{n} W_{h_{n}} \left( \frac{x - Y_{i}}{h_{n}} \right),
\end{equation}
where  
\[
W_{h_{n}}(x) = \frac{1}{2\pi} \int_{-\infty }^{+\infty } e^{-itx} \frac{\phi _{k}(t)}{\phi _{r}(t/h_{n})} \, dt.
\]  
Here, \( k(\cdot) \) is a smooth probability kernel, and \( \{ h_{n} \}_{n\geq 1} \) is a bandwidth sequence.

\begin{remark}
	\label{pdf}  
	It is clear that \( f_{n}(t) \) has the form of a classical kernel density estimator; however, here the kernel \( W_{h_{n}}(t) \) is related to the bandwidth sequence \( h_{n} \). Note that \( W_{h_{n}}(t) \) always lies in \( L_{1}(\mathbb{R}) \). In fact, by equation (\ref{fn}) and the inversion formula, we obtain
	\[
	\int_{-\infty }^{+\infty } e^{itx} W_{h_{n}}(x) \, dx = \frac{\phi _{k}(t)}{ \phi _{r}(t/h_{n})}.
	\]
	Taking \( t = 0 \) gives
	\[
	\int_{-\infty }^{+\infty } W_{h_{n}}(t) \, dt = 1.
	\]
	Hence, \( W_{h_{n}}(t) \) is a kernel, and we have \( \int_{-\infty }^{+\infty } f_{n}(x) \, dx = 1 \).
\end{remark}

A plug-in estimation of the cumulative distribution function (CDF) was first introduced by \cite{Fan1991}, who proposed a consistent estimator \( F_n \) within the framework of deconvolution methods, given by:  

\begin{equation}
	F_n(x) = \int_{-\infty}^{x} f_n(t) dt = \frac{1}{n} \sum_{i=1}^{n} M_{h_n} \left( \frac{x - Y_i}{h_n} \right) = \frac{1}{n} \sum_{i=1}^{n} M_{h_n, i} (x),  
	\label{Fn}
\end{equation}

where \( M_{h_n}(x) = \int_{-\infty}^{x} W_{h_n}(t) dt \).  

The smooth cumulative density estimator in (\ref{Fn}) is commonly known as the \textit{kernel distribution estimator}, originally proposed by \cite{Nadaraya1964} as an alternative to the empirical distribution function in the case of error-free data. Following a similar approach, \cite{Fan1991} extended this methodology to the deconvolution setting, providing a consistent estimator of \( F_n \). Throughout this paper, we adopt the same approach.

To adapt the classical hazard rate estimator in Eq. (\ref{estimat}) for contaminated data, we replace \( \hat{f}_n(\cdot) \) and \( \hat{F}_n(\cdot) \) with \( f_n(\cdot) \) and \( F_n(\cdot) \), respectively. The resulting estimator is given by:  

\begin{equation}\label{est1}
	\lambda_n(x) = \frac{f_n(x)}{1 - \min(F_n(x), 1 - \epsilon)}.
\end{equation}

where \( 0 < \epsilon < 1 \) and \( \epsilon \) approaches 0 from above. The inclusion of \( \epsilon \) in the denominator serves as a regularization term to prevent numerical instabilities when dividing by near-zero values. For large \( n \), the estimator can be approximated as  

\begin{equation}\label{est2}  
	\lambda_n(x) = \frac{f_n(x)}{1 - F_n(x)}.  
\end{equation}  

From \cite{benjrada2022hazard}, it follows that \( F_n(x) \to F(x) \) almost surely (a.s.), where \( F(x) < 1 \) almost everywhere (a.e.). Consequently, the estimator in (\ref{est1}) can be reduced to (\ref{est2}) when dealing with large sample sizes. This implies that the estimators in (\ref{est1}) and (\ref{est2}) can be used interchangeably, depending on whether \( n \) is small or sufficiently large (asymptotic analysis). However, in Section 3, we adopt the latter estimator (\ref{est2}) since our focus is on asymptotic normality.  

The approach in (\ref{est2}) was first introduced in \cite{Comte2018} to estimate the length of women's pregnancies based on incomplete and contaminated data.

\subsection{Assumptions and Notation}

Throughout this work, all constants denoted by $C$ represent generic positive constants that may vary from line to line. To streamline the presentation, we introduce the following assumptions:

\subsection*{(\textbf{H1}) Assumptions on the Error Density}

The error density function $r(\cdot)$ is assumed to be known. Additionally, its characteristic function $\phi_r(\cdot)$ satisfies:  

\begin{enumerate}
	\item $|\phi_r(t)| > 0$ for all $t \in \mathbb{R}$.
	\item $\lim\limits_{t \to +\infty} t^{\beta} \phi_r(t) = \beta_1$, where $\beta$ is an even number and $\beta_1$ is a positive constant.
\end{enumerate}

\subsection*{(\textbf{H2}) Properties of the Kernel Function}

The kernel function $k(\cdot)$ is a bounded density with an even Fourier transform $\phi_k(t)$ and satisfies:

\begin{enumerate}
	\item $\int_{-\infty}^{+\infty} s k(s) \, ds = 0$.
	\item $\int_{-\infty}^{+\infty} s^2 k(s) \, ds < +\infty$.
	\item $\phi_k(t) = O(t)$ as $t \to 0$.
\end{enumerate}

\subsection*{(\textbf{H3}) Integrability Conditions on the Kernel's Fourier Transform}

The Fourier transform $\phi_k(t)$ satisfies the following integrability conditions:

\begin{enumerate}
	\item $\int_{-\infty}^{+\infty} |t^{\beta +1} \phi_k(t)| \, dt < +\infty$.
	\item $\int_{-\infty}^{+\infty} |t^{\beta} \phi_k(t)| \, dt < +\infty$.
	\item $\int_{-\infty}^{+\infty} |t^{\beta -1} \phi_k(t)| \, dt < +\infty$.
	\item $\int_{-\infty}^{+\infty} |t^{2\beta} \phi_k(t)^2| \, dt < +\infty$.
\end{enumerate}

\subsection*{(\textbf{H4}) Regularity Conditions on $\phi_k(t)$}

The function $\phi_k(t)$ and its derivatives satisfy:

\begin{enumerate}
	\item $\int_{-\infty}^{+\infty} |t|^{\beta-2} |\phi_k(t)| \, dt < +\infty$.
	\item $\int_{-\infty}^{+\infty} |t|^{\beta-1} |\phi_k'(t)| \, dt < +\infty$.
	\item $\int_{-\infty}^{+\infty} |t|^{\beta} |\phi_k''(t)| \, dt < +\infty$.
\end{enumerate}

\subsection*{\textbf{(H5)} Conditions on the Marginal Distribution and Dependence}

\begin{enumerate}
	\item The marginal density function $g(x)$ is bounded on $\mathbb{R}$.
	\item The sequence $\{X_j\}_{j \geq 1}$ is an associated process, and the covariance structure satisfies:
	\[
	\sum\limits_{j\geq 2} j^{\mu} \text{Cov}(X_1, X_j) < +\infty, \quad \text{for some } \mu > 1.
	\]
\end{enumerate}

\subsection*{(\textbf{H6}) Growth Conditions on the Sequences $\{q_n\}$ and $\{u_n\}$}

Let $\{q_n\}$ and $\{u_n\}$ be sequences of positive integers tending to infinity, satisfying:
\[
q_n u_n = o\left( (n h_n)^{1/2} \right),
\]
and
\[
\frac{1}{n h_n^{2(\beta +1)}} \sum\limits_{i \geq p_n} \text{Cov}(X_1, X_i) \to 0,
\]
where  
\[
p_n = \left\lfloor \frac{(n h_n)^{1/2}}{u_n} \right\rfloor.
\]

	We introduce the following notations:  
	\begin{equation}
		L(u) := \frac{1}{\pi \beta_1} \int_{0}^{+\infty} \cos (t u) t^{\beta} \phi_k(t) dt.  
		\label{L}
	\end{equation}
	
	where \( \beta_1 \) and \( \beta \) are parameters defined in assumption (\textbf{H1}).  
	
	\[
	\kappa_{h_n} (x) := \left[ h_n^{\beta} W_{h_n}(x) \right]^2.
	\]
	
	We also define the constants:  
	
	\begin{equation}
		D_1 := \int_{-\infty}^{+\infty} L(u) \, du.  
		\label{D1}
	\end{equation}
	
	\begin{equation}
		D_2 := \frac{1}{2\pi \beta_1^2} \int_{-\infty}^{+\infty} \left\vert t^{2\beta} \phi_k(t)^2 \right\vert dt.  
		\label{D2}
	\end{equation}

\subsection{Comments on hypotheses}  

The positive constant \( \beta \) in (\textbf{H1}), (\textbf{H3}), (\textbf{H4}),  
and (\textbf{H6}) represents the order of the contaminating density \( r(x) \).  
This parameter plays a vital role in determining the rate of convergence of the hazard rate estimator. Assumption (\textbf{H1}) refers to the ordinary smooth case of order \( \beta \).  
For example, the Gamma\((a, b)\) and Laplacian distributions fall under the ordinary smooth case  
with \( \beta = a \) and \( \beta = 2 \), respectively. Assumption (\textbf{H2}) presents a classical choice of the kernel in non-parametric estimation. Assumption (\textbf{H3}) is used to establish the \( L^2 \)-norm of \( W_{h_n} \)  
and to derive bounds for the \( L^{\infty} \)-norm. Assumption (\textbf{H4}) is required to establish an approximation of the identity  
(see the asymptotic approximation in Lemma \ref{ident})  
with the aim of deriving asymptotic variances.  
Note that (\textbf{H3}) and (\textbf{H4}) can be relaxed if \( \phi_k \) is compactly supported. For more details, see \cite{bissantz2007}. Assumption (\textbf{H5}) is standard in non-parametric estimation from dependent data,  
as it facilitates the calculation of the limit covariance in the associated concepts. Finally, the conditions in assumption (\textbf{H6}) are frequently used when establishing  
asymptotic normality for dependent random variables.  
In particular, they allow the application of the well-known big block and small block techniques.  

The assumption  $\sum\limits_{j \geq 2} j^{\mu} \text{Cov}(X_1, X_j) < +\infty$ in (\textbf{H5}) holds if the covariance sequence satisfies  $\text{Cov}(X_1, X_j) = O(j^{-k})$ for some \( k > \mu + 1 \). Since we assume \( \mu > 1 \), this condition implies that \( k > 2 \), ensuring the summability of the series. We now provide two examples to illustrate this condition.
\begin{example} \label{Example1}
	As a first example, consider the AR(1) process:  
	\begin{equation*}
		X_{j} = \phi_1 X_{j-1} + \varepsilon_{j},
	\end{equation*}  
	where \( \{ \varepsilon_i \}_{i\geq 1} \) is an i.i.d. sequence of random variables with zero mean and unit variance.  
	
	In this case, the process \( \{X_{j}\} \) is positively associated, and its covariance function is given by:  
	\begin{equation*}
		\text{Cov}(X_1, X_j) = \phi_1^{j-1}.
	\end{equation*}  
	Thus, Condition (\textbf{H5})-2 is satisfied whenever \( 0<\phi_1 < 1 \). Indeed, in this case, the covariance decays exponentially as \( j \to +\infty \):  
	\begin{equation*}
		\text{Cov}(X_1, X_j) = O(e^{-c(j-1)}),
	\end{equation*}  
	where \( c = -\log \phi_1 \). Hence, since the series  
	\begin{equation*}  
		\sum\limits_{j \geq 2} j^{\mu} e^{-c(j-1)}  
	\end{equation*}  
	is power-exponentially weighted, it converges for all real \( \mu \) whenever the exponential decay \( e^{-c(j-1)} \) dominates the algebraic growth \( j^{\mu} \). Given our assumption that \( \mu > 1 \), we can choose \( c \) in accordance with \( \phi_{1} \) to ensure this dominance, thereby guaranteeing absolute summability and fulfilling the required condition.
\end{example}

\begin{example} \label{Example2}  
	Next, consider the Moving Average process of infinite order, denoted as \( MA(\infty) \). For a sequence of positive coefficients \( (\alpha_i)_{i \geq 1} \), the process \( \{X_j\} \) is defined as: $X_j = \sum\limits_{i=0}^{+\infty} \alpha_i \varepsilon_{j-i}.$ This process is positively associated, as established by properties P4--P6 in Esary et al. \cite{Esary1967}. The covariance sequence \( \text{Cov}(X_1, X_j) \) is given by:  
	\[
	\text{Cov}(X_1, X_j) = \sum\limits_{i=0}^{+\infty} \alpha_i \alpha_{j+i-1}.
	\]  
	By specifying the coefficients as \( \alpha_i = O(i^{-\delta}) \), we obtain: $\text{Cov}(X_1, X_j) = O(j^{-2\delta +1})$. For the summability condition to hold, we require: $-2\delta +1 < -\mu -1,$ which simplifies to: $\delta > \frac{\mu+2}{2}.$ Since \( \mu > 1 \), this condition implies \( \delta > 3/2 \), ensuring that \( \alpha_i \) decays faster than \( i^{-3/2} \).  
\end{example}

As a first result of the deconvolutional approach, we give the following proposition:

\begin{proposition}
	\label{P1} 
	
	1) For all $x\in 
	%TCIMACRO{\U{211d} }%
	%BeginExpansion
	\mathbb{R}
	%EndExpansion
	$, we have 
	
	\begin{equation*}
		\lim_{n\rightarrow +\infty }E[f_{n}(x)]=f(x),
	\end{equation*}

	\begin{equation*}
		\lim_{n\rightarrow +\infty }E[F_{n}(x)]=F(x).
	\end{equation*}
	
	2) Under conditions (\textbf{H2})-1, (\textbf{H2})-2, and supposing that $f
	$ and $F$ are in $C_{2}\left( 
	%TCIMACRO{\U{211d} }%
	%BeginExpansion
	\mathbb{R}
	%EndExpansion
	\right) $, we find
	\begin{equation*}
		\lim_{n\rightarrow +\infty }\left( h_{n}\right) ^{-2}E\left[ f_{n}(x)-f(x)
		\right] =\frac{1}{2}f^{\prime \prime }(x)\int_{-\infty }^{+\infty
		}s^{2}k(s)ds,
	\end{equation*}
	\begin{equation*}
		\lim_{n\rightarrow +\infty }\left( h_{n}\right) ^{-2}E\left[ F_{n}(x)-F(x)
		\right] =\frac{1}{2}F^{\prime \prime }(x)\int_{-\infty }^{+\infty
		}s^{2}k(s)ds.
	\end{equation*}

\end{proposition}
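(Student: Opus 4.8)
The plan is to reduce both bias computations to those of a \emph{classical} kernel estimator acting directly on the latent density $f$, thereby showing that the deconvolution step leaves the bias unaffected to leading order. The key identity I would establish first is
\[
E[f_{n}(x)] = \int_{-\infty}^{+\infty} k(s)\, f(x - h_{n}s)\, ds = (k_{h_n} * f)(x),
\]
where $k_{h_n}(u) = h_n^{-1} k(u/h_n)$. To derive it, I would write
\[
E[f_{n}(x)] = \frac{1}{h_n}\int_{-\infty}^{+\infty} W_{h_n}\!\left(\frac{x-y}{h_n}\right) g(y)\, dy,
\]
insert the inversion formula for $W_{h_n}$ from Remark~\ref{pdf}, and interchange the order of integration. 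After the substitution $s = t/h_n$, the inner integral against $g$ produces $\phi_g(-s) = \phi_f(-s)\phi_r(-s)$, since $Y = X + e$ with independent summands gives $\phi_g = \phi_f \phi_r$. The factor $\phi_r$ then cancels against $\phi_r(t/h_n)$ in the denominator of $W_{h_n}$ (using that $\phi_r$ is real and even under (\textbf{H1})), and after a change of variable $s \mapsto -s$ the evenness of $\phi_k$ from (\textbf{H2}) recasts the integrand as the inverse Fourier transform of $\phi_f(\cdot)\,\phi_k(h_n\,\cdot)$, which is exactly the transform of $k_{h_n}*f$. Inverting recovers the displayed identity.

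With this identity in hand, Part~1 follows by letting $h_n \to 0$: since $\int k = 1$ (Remark~\ref{pdf}) and $f$ is continuous, dominated convergence gives $\int k(s) f(x-h_n s)\,ds \to f(x)$. For the distribution function I would use $F_n(x) = \int_{-\infty}^x f_n(t)\,dt$, so that $E[F_n(x)] = \int_{-\infty}^x E[f_n(t)]\,dt$ and, after a further application of Fubini,
\[
E[F_n(x)] = \int_{-\infty}^{+\infty} k(s)\, F(x - h_n s)\, ds \longrightarrow F(x),
\]
again by continuity of $F$ and $\int k = 1$.

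For Part~2 I would Taylor-expand to second order. Writing $E[f_n(x)] - f(x) = \int k(s)[f(x-h_n s) - f(x)]\,ds$ and expanding $f(x - h_n s) = f(x) - h_n s\, f'(x) + \tfrac12 h_n^2 s^2 f''(x) + o(h_n^2 s^2)$, the first-order term vanishes by (\textbf{H2})-1, and (\textbf{H2})-2 together with $f \in C_{2}$ leaves
\[
E[f_n(x)] - f(x) = \tfrac12 h_n^2 f''(x)\int_{-\infty}^{+\infty} s^2 k(s)\,ds + o(h_n^2),
\]
which yields the claimed limit after multiplying by $h_n^{-2}$. The identical argument applied to $\int k(s) F(x-h_n s)\,ds$, using $F \in C_{2}$ and (\textbf{H2})-1--2, gives the companion statement for $F_n$.

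The main obstacle is rigor in the first step: justifying the Fubini interchange and the cancellation of $\phi_r$. This requires that the oscillatory integral defining $W_{h_n}$ be absolutely convergent and that $W_{h_n} \in L_1$, which is supplied by the integrability conditions (\textbf{H3}) on $\phi_k/\phi_r$ and by (\textbf{H1})-1 ($|\phi_r| > 0$), while the boundedness of $g$ in (\textbf{H5})-1 controls the outer integral. A secondary point is controlling the Taylor remainder uniformly in $s$ so that the error integrates to $o(h_n^2)$; this is guaranteed by the finiteness of $\int s^2 k(s)\,ds$ together with the $C_{2}$ regularity of $f$ and $F$.
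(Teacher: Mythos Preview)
Your proposal is correct and follows exactly the standard route that the paper defers to: the paper omits the proof of Proposition~\ref{P1} entirely and points to Proposition~1 in \cite{Masry2003} and \cite{benjrada2022hazard}, both of which proceed precisely by establishing the identity $E[f_n(x)]=\int k(s)\,f(x-h_n s)\,ds$ via the Fourier-side cancellation of $\phi_r$ and then invoking the classical second-order Taylor expansion under \textbf{(H2)}-1,2. One small remark: the evenness of $\phi_r$ that you use for the cancellation is not stated explicitly in \textbf{(H1)}, but it is implicit in \textbf{(H1)}-2 (since $t^\beta\phi_r(t)\to\beta_1>0$ forces $\phi_r$ to be real, hence even), and all the error densities the paper considers are symmetric; you may want to flag this as an implicit assumption rather than a direct consequence of \textbf{(H1)}.
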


The proof of Proposition \ref{P1} is rather technical (see Proposition 1 in \cite{benjrada2022hazard} and Proposition 1 in \cite{Masry2003}) and then omitted.

\section{Main Results}

This section presents the main results of the paper. For clarity, we divide it into two subsections. The first subsection discusses the quadratic mean convergence of \( \lambda_n(x) \). The second subsection establishes its convergence in distribution toward a Gaussian distribution, leveraging the \(\delta\)-method theorem from \cite{Doob1935}. As a direct consequence, an asymptotic confidence interval is constructed in Corollary \ref{CI}.

\subsection{The Quadratic Mean Convergence}

First, we derive the precise asymptotic expression for \(\text{Var}(\lambda_n(x))\). Since \(\lambda_n(x)\) is defined as the ratio of \( f_n(x) \) to \( 1 - F_n(x) \), we need to compute \(\text{Cov}(f_n(x), 1 - F_n(x))\) and \(\text{Var}(1 - F_n(x))\) separately. The expression for \(\text{Var}(f_n(x))\) has already been established by \cite{Masry2003} for this type of data.

\begin{theorem}
	\label{variance} 
	Under the hypotheses (\textbf{H1}), (\textbf{H2})-3, (\textbf{H3})--(\textbf{H5}), suppose also that $\left| g_{Y_{1},Y_{i}}(u,v) - g(u) g(v) \right| \leq M < +\infty$	and $nh_{n}^{2\beta} \to +\infty \quad \text{as} \quad n \to +\infty$. Then, we have:  
	\begin{equation}
		\lim_{n\rightarrow +\infty } nh_{n}^{2\beta } \, \text{Var}(1 - F_{n}(x)) = D_{1}^{2} G(x) \left[ 1 - G(x) \right],  \label{var1}
	\end{equation}
	\begin{equation}
		\lim_{n\rightarrow +\infty } nh_{n}^{2\beta } \, \text{Cov}(f_{n}(x), 1 - F_{n}(x)) = -D_{2} G(x),  \label{cov1}
	\end{equation}
	where \( G(x) \) is the distribution function of \( \left\{ Y_{i} \right\}_{i=1}^{n} \).
\end{theorem}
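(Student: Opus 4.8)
The plan is to establish both limits in tandem, after the reductions $\text{Var}(1-F_n(x))=\text{Var}(F_n(x))$ and $\text{Cov}(f_n(x),1-F_n(x))=-\text{Cov}(f_n(x),F_n(x))$. Writing $M_{h_n,i}(x)=M_{h_n}((x-Y_i)/h_n)$ and $W_{h_n,i}(x)=W_{h_n}((x-Y_i)/h_n)$ and using strict stationarity, I would expand each quantity as a single diagonal term plus a weighted lag sum,
\begin{equation*}
	\text{Var}(F_n(x))=\frac{1}{n}\text{Var}(M_{h_n,1}(x))+\frac{2}{n}\sum_{j=2}^{n}\Big(1-\tfrac{j-1}{n}\Big)\text{Cov}(M_{h_n,1}(x),M_{h_n,j}(x)),
\end{equation*}
and analogously for $\text{Cov}(f_n(x),F_n(x))$, with the factor $h_n^{-1}W_{h_n,1}(x)$ replacing one copy of $M_{h_n,1}(x)$. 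Multiplying through by $nh_n^{2\beta}$ splits the target into a diagonal contribution and an off-diagonal (dependence) contribution, and the whole argument consists in showing that the diagonal produces the stated constants while the off-diagonal is asymptotically negligible.

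For the diagonal terms I would change variables to $v=(x-y)/h_n$, so that, for instance, $\E[M_{h_n,1}(x)^2]=h_n\int M_{h_n}(v)^2 g(x-h_nv)\,dv$ and $\E[W_{h_n,1}(x)M_{h_n,1}(x)]=h_n\int W_{h_n}(v)M_{h_n}(v)g(x-h_nv)\,dv$. A first-order Taylor expansion of $g(x-h_nv)$ about $x$ reduces these to integrals of $h_n^{\beta}W_{h_n}$ and of its antiderivative $h_n^{\beta}M_{h_n}$, which are precisely the objects controlled by the approximation-of-the-identity statement of Lemma \ref{ident} together with the limiting identities recorded in \eqref{D1} and \eqref{D2}. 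The factor $G(x)$, rather than $F(x)$, appears because these second moments are averaged against the observation density $g$ and the dominant (non-oscillatory) part of $h_n^{\beta}M_{h_n}((x-Y_1)/h_n)$ behaves like $D_1\mathbf{1}\{Y_1<x\}$, whose second moment and variance are $G(x)$ and $G(x)[1-G(x)]$; the squared-mean corrections are $O(1)$ by Proposition \ref{P1} and therefore vanish after multiplication by $h_n^{2\beta}$. This yields the right-hand sides of \eqref{var1} and \eqref{cov1}.

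The delicate part, and the step I expect to be the main obstacle, is proving that the off-diagonal lag sums are negligible on the scale $nh_n^{2\beta}$. Here association is exploited in two complementary regimes separated at the cutoff $p_n=\lfloor (nh_n)^{1/2}/u_n\rfloor$ of (\textbf{H6}). For distant lags $j>p_n$ I would invoke the covariance inequality for associated random variables (of Newman--Bulinski type), bounding $|\text{Cov}(M_{h_n,1}(x),M_{h_n,j}(x))|$ by the product of the Lipschitz constants of $y\mapsto M_{h_n}((x-y)/h_n)$ --- of order $h_n^{-1}\|W_{h_n}\|_{\infty}=O(h_n^{-(\beta+1)})$ under (\textbf{H3}) --- times $\text{Cov}(X_1,X_j)$, so that the far block is controlled by $\big(nh_n^{2(\beta+1)}\big)^{-1}\sum_{i\ge p_n}\text{Cov}(X_1,X_i)$, which tends to $0$ by (\textbf{H6}). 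For the near lags $2\le j\le p_n$ I would instead use the uniform bound $|g_{Y_1,Y_j}(u,v)-g(u)g(v)|\le M$ to estimate each covariance directly and count the $O(p_n)$ surviving terms, the growth restriction $q_nu_n=o((nh_n)^{1/2})$ being exactly what makes this block $o(1)$ after scaling. The crux is calibrating the blow-up of the kernel norms (powers of $h_n^{-\beta}$) against the covariance decay furnished by $\sum_{j\ge2}j^{\mu}\text{Cov}(X_1,X_j)<\infty$ in (\textbf{H5}); once both blocks vanish, only the diagonal survives and \eqref{var1}--\eqref{cov1} follow.
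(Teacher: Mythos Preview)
Your overall architecture---split into diagonal and off-diagonal parts, use Lemma~\ref{ident} for the former and the association covariance inequality for the latter---matches the paper's proof. The diagonal portion is fine once you lean on Lemma~\ref{ident} for both $l=1$ and $l=2$ (note that the squared-mean piece $(h_n^{\beta}E[M_{h_n,1}(x)])^2$ does \emph{not} vanish: it contributes $-(G(x)D_1)^2$, which is what produces the factor $1-G(x)$).

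The genuine gap is in the off-diagonal block. First, Theorem~\ref{variance} does not assume (\textbf{H6}), so you cannot appeal to the cutoff $p_n$ or to the condition $(nh_n^{2(\beta+1)})^{-1}\sum_{i\ge p_n}\text{Cov}(X_1,X_i)\to0$. Second, even granting (\textbf{H6}), your scaling is off by the factor $nh_n^{2\beta}\to\infty$: after multiplying $\text{Var}(F_n(x))$ by $nh_n^{2\beta}$, the far block is bounded by $Ch_n^{-2}\sum_{i>\text{cutoff}}\text{Cov}(X_1,X_i)$, not by $(nh_n^{2(\beta+1)})^{-1}\sum_{i}\text{Cov}(X_1,X_i)$; the (\textbf{H6}) condition is therefore too weak even if it were available. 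Third, the near block (using the joint-density bound) yields $O(\text{cutoff}\cdot h_n^{2})$ after scaling, and nothing in $q_nu_n=o((nh_n)^{1/2})$ forces $p_nh_n^{2}\to0$. The paper resolves all three issues simultaneously by choosing a \emph{tailored} truncation $\rho_n=h_n^{-2/\mu}$ (so $\rho_n h_n^{2}=h_n^{2-2/\mu}\to0$ since $\mu>1$) and exploiting the $j^{\mu}$-weighting in (\textbf{H5})-2: for $i>\rho_n$ one has $1\le(i/\rho_n)^{\mu}$, whence $h_n^{-2}\sum_{i>\rho_n}\text{Cov}(X_1,X_i)\le h_n^{-2}\rho_n^{-\mu}\sum_{i>\rho_n}i^{\mu}\text{Cov}(X_1,X_i)=\sum_{i>\rho_n}i^{\mu}\text{Cov}(X_1,X_i)\to0$. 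You should replace your (\textbf{H6})-based split by this $\rho_n$-split driven by (\textbf{H5})-2; the rest of your outline then goes through.
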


Now, we are in a position to evaluate the asymptotic expression of $\text{Var}\left( \lambda _{n}\left( x\right) \right) $.

\begin{corollary}
	\label{corellary1} Under the assumptions of Theorem \ref{variance}, we have 
	\begin{eqnarray*}
		\lim_{n\rightarrow +\infty }nh_{n}^{2\beta }\text{Var}(\lambda _{n}(x)) &=&\frac{%
			D_{2}g\left( x\right) }{\left[ 1-F(x)\right] ^{2}}-2\frac{D_{2}f\left(
			x\right) G\left( x\right) }{\left[ 1-F(x)\right] ^{3}}+\frac{D_{1}^{2}\left[
			f(x)\right] ^{2}G(x)\left[ 1-G(x)\right] }{\left[ 1-F(x)\right] ^{4}} \\
		&&\left. =:\sigma ^{2}\left( x\right) \right. .
	\end{eqnarray*}
\end{corollary}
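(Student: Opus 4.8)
The corollary is precisely the propagation-of-variance (delta-method) statement for the smooth map $\psi(u,v)=u/v$ applied to the random pair $\bigl(f_n(x),\,1-F_n(x)\bigr)$. The plan is therefore to linearize $\lambda_n(x)$ about the deterministic limit of its mean and then insert the three second-order quantities that are already available: the two limits supplied by Theorem \ref{variance}, together with the variance of $f_n(x)$ established by Masry for this type of data. This is the natural variance-level analog of the $\delta$-method that drives the distributional result announced for the next subsection.

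First I would set $U_n:=f_n(x)$ and $V_n:=1-F_n(x)$, so that $\lambda_n(x)=\psi(U_n,V_n)$ with $\psi(u,v)=u/v$. By Proposition \ref{P1} we have $E[U_n]\to f(x)$ and $E[V_n]\to 1-F(x)>0$ (recall $F(x)<1$ a.e.). Expanding $\psi$ to first order about $(E[U_n],E[V_n])$ gives the standard approximation
\begin{equation*}
	\text{Var}(\lambda_n(x)) = \psi_u^2\,\text{Var}(U_n) + 2\,\psi_u\psi_v\,\text{Cov}(U_n,V_n) + \psi_v^2\,\text{Var}(V_n) + R_n,
\end{equation*}
where, evaluated in the limit at $\bigl(f(x),\,1-F(x)\bigr)$, the partial derivatives are
\begin{equation*}
	\psi_u = \frac{1}{1-F(x)}, \qquad \psi_v = -\,\frac{f(x)}{[1-F(x)]^2},
\end{equation*}
and $R_n$ collects the higher-order remainder.

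Next I would multiply through by $nh_n^{2\beta}$ and pass to the limit. The three leading coefficients $\psi_u^2=[1-F(x)]^{-2}$, $2\psi_u\psi_v$, and $\psi_v^2=[f(x)]^2[1-F(x)]^{-4}$ multiply, respectively, the limiting quantities $nh_n^{2\beta}\text{Var}(f_n(x))\to D_2\,g(x)$ (Masry), $nh_n^{2\beta}\text{Cov}(f_n(x),1-F_n(x))\to -D_2\,G(x)$ and $nh_n^{2\beta}\text{Var}(1-F_n(x))\to D_1^2\,G(x)[1-G(x)]$ of Theorem \ref{variance}. Assembling these three contributions reproduces term by term the summands defining $\sigma^2(x)$, which completes the computation modulo the remainder.

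The genuine obstacle is not this algebra but the justification that the rescaled remainder is negligible, $nh_n^{2\beta}R_n\to 0$, so that the linearization is valid at the level of variances rather than merely in distribution. I would control this in two steps: (i) bound the negative moments of $V_n=1-F_n(x)$ away from the singularity $v=0$ of $\psi$, which is legitimate because $1-F(x)>0$ and, if necessary, via the $\epsilon$-truncation in (\ref{est1}); and (ii) establish uniform integrability of the quadratic Taylor remainder after the $nh_n^{2\beta}$ rescaling, using the moment and covariance bounds on $f_n(x)$ and $F_n(x)$ that the association hypothesis (\textbf{H5}) and the kernel integrability conditions (\textbf{H3})--(\textbf{H4}) already furnish in the proof of Theorem \ref{variance}. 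The expected outcome is that $R_n=o\bigl((nh_n^{2\beta})^{-1}\bigr)$, whence the stated limit for $\text{Var}(\lambda_n(x))$ follows.
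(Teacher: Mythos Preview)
Your approach is essentially the same as the paper's: both apply the $\delta$-method to $\psi(u,v)=u/v$ at the pair $(f_n(x),1-F_n(x))$, assemble the quadratic form $\nabla\psi^{T}\Sigma\,\nabla\psi$ from Masry's variance result for $f_n$ together with the two limits of Theorem~\ref{variance}, and read off $\sigma^2(x)$. The paper simply invokes Doob's $\delta$-method theorem without discussing the remainder, so your explicit plan for controlling $R_n$ is, if anything, more careful than what the paper provides.
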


\begin{remark}
	The asymptotic variance $\sigma ^{2}\left( x\right) $ depends on $ D_{1}$ and $D_{2}$, which pertain to $\phi _{k}\left(.\right) $. Thus, in order to have a hazard rate estimator with minimal variance, one minimizes $ \sigma ^{2}\left(x\right) $ by dealing with $D_{1}$ and $D_{2}$ along with $\phi _{k}\left(.\right) $ in a certain class of kernels.
\end{remark}

As to the asymptotic expectation, the next centering term is needed: 
\begin{equation*}
	\tilde{E}(\lambda _{n}(x))=\frac{E\left[ f_{n}(x)\right] }{1-E\left[ F_{n}(x)%
		\right] }.  
\end{equation*}%
We give the following intermediate result.

\begin{proposition} \label{bais1} Under the assumptions of Theorem \ref{variance}, we have
	\begin{equation*}
		E(\lambda _{n}(x))=\tilde{E}(\lambda _{n}(x))+O((nh_n^{2\beta})^{-1}).
	\end{equation*}
	
\end{proposition}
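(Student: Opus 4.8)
The plan is to treat $\lambda_n(x) = f_n(x)/(1-F_n(x))$ as a ratio and expand it around the ratio of expectations $\tilde E(\lambda_n(x)) = a/b$, where I write $a := E[f_n(x)]$ and $b := 1 - E[F_n(x)]$. By Proposition \ref{P1}, $a \to f(x)$ and $b \to 1-F(x) > 0$, so $b$ is bounded away from zero for $n$ large enough, which legitimizes dividing by $b$. Introducing the centered variables $U := f_n(x) - a$ and $V := (1-F_n(x)) - b = E[F_n(x)] - F_n(x)$, both of zero mean, I would use the exact second-order identity $(b+V)^{-1} = b^{-1} - Vb^{-2} + V^2 b^{-2}(b+V)^{-1}$ to write
\begin{equation*}
\lambda_n(x) = \frac{a}{b} + \frac{U}{b} - \frac{aV}{b^2} - \frac{UV}{b^2} + \frac{f_n(x)\,V^2}{b^2\,(1-F_n(x))}.
\end{equation*}
Taking expectations and using $E[U] = E[V] = 0$ to kill the two linear terms gives
\begin{equation*}
E(\lambda_n(x)) - \tilde E(\lambda_n(x)) = -\frac{\text{Cov}(f_n(x),1-F_n(x))}{b^2} + \frac{1}{b^2}\,E\!\left[\frac{f_n(x)\,V^2}{1-F_n(x)}\right].
\end{equation*}

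The first term is controlled directly by Theorem \ref{variance}: relation (\ref{cov1}) yields $\text{Cov}(f_n(x),1-F_n(x)) = O((nh_n^{2\beta})^{-1})$, and since $b^2 \to (1-F(x))^2$ is bounded away from zero, this whole term is $O((nh_n^{2\beta})^{-1})$, exactly the claimed order.

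The main work, and the principal obstacle, is to show that the remainder $R_n := E[f_n(x)V^2/(1-F_n(x))]$ is also $O((nh_n^{2\beta})^{-1})$; the difficulty is the random denominator $1-F_n(x)$, which is not deterministically bounded below under (\ref{est2}). Here I would invoke the regularization built into (\ref{est1}), for which $1-\min(F_n(x),1-\epsilon) \geq \epsilon$, so that $R_n \leq \epsilon^{-1}\,E[f_n(x)V^2]$; the two estimators agree asymptotically, and for $n$ large $E[F_n(x)] \to F(x) < 1-\epsilon$, so the centering $b$ is unaffected. Splitting $f_n(x) = a + U$ gives $E[f_n(x)V^2] = a\,E[V^2] + E[UV^2]$. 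The leading piece is $a\,E[V^2] = E[f_n(x)]\,\text{Var}(1-F_n(x))$, which is $O(1)\cdot O((nh_n^{2\beta})^{-1})$ by (\ref{var1}) together with $E[f_n(x)] \to f(x)$.

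The cross term is handled by Cauchy--Schwarz, $|E[UV^2]| \leq (\text{Var}(f_n(x)))^{1/2}(E[V^4])^{1/2}$: the variance of the deconvolving density estimator has the order established by \cite{Masry2003}, while the fourth central moment $E[V^4]$ of the normalized sum $F_n(x)$ is $O((nh_n^{2\beta})^{-2})$. This last bound is the genuinely technical step: it is obtained by expanding $V^4$ into its mixed-moment contributions and estimating them through the covariance structure of the associated sequence, using Property $P_4$ (so that the transformed summands remain associated) and the summability condition (\textbf{H5})-2, in the same spirit as the variance computation underlying Theorem \ref{variance}. Combining these estimates gives $E[UV^2] = O((nh_n^{2\beta})^{-1})$, hence $R_n = O((nh_n^{2\beta})^{-1})$, and adding the two contributions completes the argument.
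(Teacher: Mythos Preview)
Your decomposition is exactly the one the paper uses: the identity $(b+V)^{-1}=b^{-1}-Vb^{-2}+V^2b^{-2}(b+V)^{-1}$ is equation (\ref{decomp}), and after multiplying by $f_n(x)$ and taking expectations the paper arrives at the same two pieces, the covariance term and $s_n(x):=E[\lambda_n(x)V^2]$, which is your $R_n$. The handling of the covariance term is identical. The difference lies only in how the remainder is controlled.

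The paper avoids any fourth-moment bound by exploiting the algebraic identity $\lambda_n(x)(1-F_n(x))=f_n(x)$. Writing $s_n(x)=\text{Cov}\big(\lambda_n(x)V,\,F_n(x)\big)$ and then substituting $\lambda_n(x)F_n(x)=\lambda_n(x)-f_n(x)$, one obtains
\[
s_n(x)=-\text{Cov}\big(f_n(x),F_n(x)\big)+\big(1-E[F_n(x)]\big)\,\text{Cov}\big(\lambda_n(x),F_n(x)\big),
\]
and Cauchy--Schwarz with Theorem \ref{variance} and Corollary \ref{corellary1} finishes the job using only second moments already established. Your route instead bounds the random denominator via the $\epsilon$-regularization of (\ref{est1}) and then needs $E[V^4]=O\big((nh_n^{2\beta})^{-2}\big)$. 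That bound is plausible but is \emph{not} a consequence of anything proved in the paper; for associated sequences it requires a genuine cumulant/mixed-moment computation beyond the covariance estimates of Theorem \ref{variance}, so you are trading a two-line algebraic trick for an additional technical lemma. Both approaches are valid, but the paper's is shorter and stays entirely within the second-moment machinery already in place.
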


Next
\begin{proposition}
	\label{bais2} Under the assumptions of Theorem \ref{variance} and the
	assumption that $f$ and $F$ are in $C_{2}(%
	%TCIMACRO{\U{211d} }%
	%BeginExpansion
	\mathbb{R}
	%EndExpansion
	)$, we get 
	\[
	\tilde{E}(\lambda _{n}(x))-\lambda (x)=\frac{h_{n}^{2}}{2}\left[ \lambda ^{\prime
		\prime }(x)-\frac{2\lambda ^{\prime }(x)F^{\prime }(x)}{1-F(x)}\right]
	\int_{-\infty }^{+\infty }t^{2}k(t)dt+o(1).
	\]
\end{proposition}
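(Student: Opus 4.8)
The plan is to reduce the claim to the first-order (in $h_n^2$) expansions of the two expectations $E[f_n(x)]$ and $E[F_n(x)]$ already furnished by Proposition~\ref{P1}, followed by a controlled expansion of the ratio defining $\tilde{E}(\lambda_n(x))$. Writing $m_2 := \int_{-\infty}^{+\infty} t^2 k(t)\,dt$, part (2) of Proposition~\ref{P1} (valid here because $f,F\in C_2(\R)$) gives
\begin{equation*}
	E[f_n(x)] = f(x) + \frac{h_n^2}{2}\,f''(x)\,m_2 + o(h_n^2), \qquad E[F_n(x)] = F(x) + \frac{h_n^2}{2}\,F''(x)\,m_2 + o(h_n^2).
\end{equation*}
I would set $a_n := E[f_n(x)] - f(x)$ and $b_n := E[F_n(x)] - F(x)$, both of order $O(h_n^2)$, and keep careful track of which remainder terms are $o(h_n^2)$.

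Next I would expand the denominator. Since $F(x) < 1$ and $b_n \to 0$, the quantity $1 - E[F_n(x)] = (1-F(x))\bigl(1 - b_n/(1-F(x))\bigr)$ stays bounded away from zero for $n$ large, so a geometric expansion is legitimate:
\begin{equation*}
	\frac{1}{1 - E[F_n(x)]} = \frac{1}{1-F(x)}\left[ 1 + \frac{b_n}{1-F(x)} + O(b_n^2) \right].
\end{equation*}
Multiplying by $E[f_n(x)] = f(x) + a_n$ and discarding the products $a_n b_n$ and $b_n^2$, each of which is $O(h_n^4) = o(h_n^2)$, I obtain
\begin{equation*}
	\tilde{E}(\lambda_n(x)) = \lambda(x) + \frac{a_n}{1-F(x)} + \frac{f(x)\,b_n}{(1-F(x))^2} + o(h_n^2).
\end{equation*}
Substituting the expressions for $a_n$ and $b_n$ collects the bias into $\frac{h_n^2}{2}\,m_2\bigl[\frac{f''(x)}{1-F(x)} + \frac{f(x)F''(x)}{(1-F(x))^2}\bigr] + o(h_n^2)$.

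The final step is an algebraic identity: I would verify that this bracket equals $\lambda''(x) - \frac{2\lambda'(x)F'(x)}{1-F(x)}$. Using $F' = f$ and $F'' = f'$, a direct differentiation gives
\begin{equation*}
	\lambda'(x) = \frac{f'(x)}{1-F(x)} + \frac{f(x)^2}{(1-F(x))^2}, \qquad \lambda''(x) = \frac{f''(x)}{1-F(x)} + \frac{3f(x)f'(x)}{(1-F(x))^2} + \frac{2f(x)^3}{(1-F(x))^3}.
\end{equation*}
Subtracting $\frac{2f(x)}{1-F(x)}\lambda'(x) = \frac{2f(x)f'(x)}{(1-F(x))^2} + \frac{2f(x)^3}{(1-F(x))^3}$ cancels the two highest-order terms and leaves $\frac{f''(x)}{1-F(x)} + \frac{f(x)f'(x)}{(1-F(x))^2}$, which coincides with the collected bracket because $F'' = f'$. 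As $h_n \to 0$, any $o(h_n^2)$ term is in particular $o(1)$, yielding the stated expansion.

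The main obstacle is bookkeeping rather than conceptual: one must be sure that every term dropped in the ratio expansion is genuinely $o(h_n^2)$, which relies on $1-F(x)$ being bounded away from zero so that the geometric series is valid, and on $a_n b_n$ and $b_n^2$ being $O(h_n^4)$. The closing identity is a routine but error-prone derivative computation, cleanest when organized by powers of $(1-F(x))^{-1}$.
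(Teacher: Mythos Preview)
Your proof is correct and follows essentially the same route as the paper: expand $E[f_n(x)]$ and $E[F_n(x)]$ to second order via Proposition~\ref{P1}, linearize the ratio, and then reduce the resulting bracket $\frac{f''(x)}{1-F(x)} + \frac{f(x)F''(x)}{(1-F(x))^2}$ to $\lambda''(x) - \frac{2\lambda'(x)F'(x)}{1-F(x)}$. The only cosmetic difference is that the paper writes the difference $\tilde{E}(\lambda_n(x)) - \lambda(x)$ directly as the single fraction $\frac{E[f_n(x)-f(x)] + \lambda(x)E[F_n(x)-F(x)]}{1-E[F_n(x)]}$, which sidesteps the geometric-series step and the explicit discarding of $a_n b_n$ and $b_n^2$; your version is equally valid and in fact more explicit about the remainder control and the derivative identity.
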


\subsection{The Asymptotic Normality}
The fundamental result in this subsection is Theorem \ref{an} hereafter.

\begin{theorem}
	\label{an} 
	Under assumptions \textbf{(H1)}--\textbf{(H6)}, suppose that: $nh_n^{2\beta} \to +\infty \quad \text{as} \quad n \to +\infty.$ 
	Then, the following asymptotic normality result holds:  
	\begin{equation*}
		\sqrt{nh_n^{2\beta}} \left( \lambda_n(x) - \mathbb{E} \left[ \lambda_n(x) \right] \right) 
		\overset{\mathcal{D}}{\longrightarrow} \mathcal{N} \left( 0, \sigma^2(x) \right).
	\end{equation*}
\end{theorem}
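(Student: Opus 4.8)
The plan is to linearize $\lambda_n(x)$ around the centering ratio $\tilde{E}(\lambda_n(x)) = \E[f_n(x)]/(1-\E[F_n(x)])$ and to transfer asymptotic normality from the underlying pair $(f_n(x),F_n(x))$ through the $\delta$-method of \cite{Doob1935}. First I would reduce the centering constant. By Proposition \ref{bais1} we have $\E[\lambda_n(x)] = \tilde{E}(\lambda_n(x)) + O((nh_n^{2\beta})^{-1})$, hence $\sqrt{nh_n^{2\beta}}\,(\E[\lambda_n(x)] - \tilde{E}(\lambda_n(x))) = O((nh_n^{2\beta})^{-1/2}) \to 0$. By Slutsky's lemma it therefore suffices to prove that $\sqrt{nh_n^{2\beta}}\,(\lambda_n(x) - \tilde{E}(\lambda_n(x))) \overset{\mathcal{D}}{\longrightarrow} \mathcal{N}(0,\sigma^2(x))$.

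The core step is to establish the joint asymptotic normality of the bivariate vector $\sqrt{nh_n^{2\beta}}\,(f_n(x)-\E f_n(x),\ (1-F_n(x))-\E(1-F_n(x)))$, whose limiting covariance matrix $\Sigma$ has entries supplied by Theorem \ref{variance} (for $\mathrm{Var}(1-F_n)$ and $\mathrm{Cov}(f_n,1-F_n)$) and by \cite{Masry2003} (for $\mathrm{Var}(f_n)$). By the Cramér--Wold device this reduces to a univariate central limit theorem for $T_n := \sqrt{h_n^{2\beta}/n}\,\sum_{i=1}^{n}(\xi_{n,i}-\E\xi_{n,i})$, where $\xi_{n,i} = c_1 h_n^{-1} W_{h_n}((x-Y_i)/h_n) + c_2\,(1-M_{h_n}((x-Y_i)/h_n))$ and $c_1,c_2$ are arbitrary reals. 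Since $\{Y_i\}$ is strictly stationary and associated, $T_n$ is a normalized partial sum over an associated triangular array, whose asymptotic normality I would prove by the big-block/small-block (Bernstein) technique licensed by assumption \textbf{(H6)}: split $\{1,\dots,n\}$ into alternating large blocks of size $p_n$ and small separating blocks, show in $L^2$ that the small-block and remainder contributions vanish, show by a covariance inequality for associated variables that the large blocks are asymptotically independent, and finally verify a Lyapunov condition so that the sum over large blocks is Gaussian with the variance prescribed by $\Sigma$. Evaluating the quadratic form $c^\top \Sigma c$ at the weights $c_1 = 1/(1-F(x))$ and $c_2 = -f(x)/(1-F(x))^2$ reproduces $\sigma^2(x)$ of Corollary \ref{corellary1}.

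To close, I would apply the $\delta$-method. Writing $\lambda_n(x) = \psi(f_n(x),1-F_n(x))$ with $\psi(u,w)=u/w$, which is continuously differentiable at $(f(x),1-F(x))$ because $1-F(x)>0$, a first-order Taylor expansion around $(\E f_n(x),\E(1-F_n(x))) \to (f(x),1-F(x))$ yields $\lambda_n(x)-\tilde{E}(\lambda_n(x)) = \psi_u\,(f_n-\E f_n) + \psi_w\,((1-F_n)-\E(1-F_n)) + R_n$, where the remainder $R_n$ is a quadratic form in the centered estimators. Since each centered term is $O_P((nh_n^{2\beta})^{-1/2})$, the remainder satisfies $\sqrt{nh_n^{2\beta}}\,R_n = O_P((nh_n^{2\beta})^{-1/2}) = o_P(1)$, and the claim follows from the joint normality above together with Slutsky's lemma.

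The main obstacle will be the central limit theorem of the second step, for two reasons. The deconvolution kernel $W_{h_n}$ is not monotone, so Property $P_4$ (monotone functions preserve association) does not apply directly to the summands $\xi_{n,i}$; instead one must control inter-block covariances through a Fourier-type covariance inequality for associated variables, of the form $|\mathrm{Cov}(e^{\mathrm{i}tU},e^{\mathrm{i}sV})| \le |t||s|\,\mathrm{Cov}(U,V)$, combined with the inversion representations of $W_{h_n}$ and $M_{h_n}$. The $L^1$-norms arising from these representations blow up at the rate $h_n^{-\beta}$ because of the factor $\phi_r(t/h_n)$ in the denominator, and the delicate part is to balance this blow-up against the covariance decay. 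This is precisely the role of the tail condition $\frac{1}{nh_n^{2(\beta+1)}}\sum_{i\ge p_n}\mathrm{Cov}(X_1,X_i)\to 0$ and the growth constraint $q_n u_n = o((nh_n)^{1/2})$ in \textbf{(H6)}, which guarantee that the small blocks are negligible and the large blocks decouple at the correct normalization $nh_n^{2\beta}$.
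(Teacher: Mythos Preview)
Your proposal is correct and follows essentially the same route as the paper: both prove joint asymptotic normality of $(f_n(x),1-F_n(x))$ via Cram\'er--Wold and the Bernstein big-block/small-block scheme under \textbf{(H6)}, control inter-block dependence through Birkel's covariance inequality (your ``Fourier-type'' bound is exactly what Lemma~\ref{Berkel} gives when applied to $e^{it\eta_s/\sqrt{n}}$, with the derivative bounded by $C/(h_n^{\beta+1}\sqrt{n})$ via Lemma~\ref{prop2}), verify the Lindeberg--Feller condition, and conclude with the $\delta$-method. Your explicit reduction of the centering from $\E[\lambda_n(x)]$ to $\tilde{E}(\lambda_n(x))$ via Proposition~\ref{bais1} and Slutsky is a step the paper leaves implicit, but otherwise the two arguments coincide.
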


By applying Propositions \ref{bais1} and \ref{bais2}, and assuming $nh_n^{2\beta} \to +\infty \quad \text{as} \quad n \to +\infty,$ we obtain:  $E(\lambda_{n}(x)) = \lambda(x) + O(h_n^2) + O\left((nh_n^{2\beta})^{-1}\right) =\lambda(x)+ o(1).$ Combining this result with Theorem \ref{an}, we obtain the following corollary.

\begin{corollary}
	\label{better} Under the assumptions of Theorem \ref{an} and the assumption that $%
	f$ and $F$ are in $C_{2}(%
	%TCIMACRO{\U{211d} }%
	%BeginExpansion
	\mathbb{R}
	%EndExpansion
	)$ we find 
	\begin{equation*}
		\sqrt{nh_{n}^{2\beta }}\left( \lambda _{n}\left( x\right) -\lambda \left(
		x\right) \right) \overset{\mathcal{D}}{\rightarrow }\mathcal{N}\left(
		0,\sigma^2 \left( x\right) \right) .
	\end{equation*}
\end{corollary}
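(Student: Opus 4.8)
The plan is to obtain Corollary~\ref{better} from Theorem~\ref{an} by a re-centering argument, replacing the random centering $\mathbb{E}[\lambda_n(x)]$ by the true target $\lambda(x)$ and absorbing the difference into a deterministic bias that I will show is negligible after scaling. First I would write the exact decomposition
\begin{equation*}
\sqrt{nh_n^{2\beta}}\left(\lambda_n(x)-\lambda(x)\right)
= \sqrt{nh_n^{2\beta}}\left(\lambda_n(x)-\mathbb{E}[\lambda_n(x)]\right)
+ \sqrt{nh_n^{2\beta}}\left(\mathbb{E}[\lambda_n(x)]-\lambda(x)\right),
\end{equation*}
so that the first summand is precisely the stochastic quantity controlled by Theorem~\ref{an}, which converges in distribution to $\mathcal{N}(0,\sigma^2(x))$, while the second summand is purely deterministic. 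It then remains to prove that this deterministic term tends to $0$, after which Slutsky's theorem closes the argument.

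To handle the bias I would chain Propositions~\ref{bais1} and~\ref{bais2}. Proposition~\ref{bais1} bounds the gap between $\mathbb{E}[\lambda_n(x)]$ and the surrogate center $\tilde{E}(\lambda_n(x))$ by $O((nh_n^{2\beta})^{-1})$, while Proposition~\ref{bais2} expands $\tilde{E}(\lambda_n(x))-\lambda(x)$ with leading term of order $h_n^2$. Concatenating them gives
\begin{equation*}
\mathbb{E}[\lambda_n(x)]-\lambda(x)
= \frac{h_n^2}{2}\left[\lambda''(x)-\frac{2\lambda'(x)F'(x)}{1-F(x)}\right]\int_{-\infty}^{+\infty} t^2 k(t)\,dt
+ O\!\left((nh_n^{2\beta})^{-1}\right) + o(h_n^2).
\end{equation*}
Multiplying through by $\sqrt{nh_n^{2\beta}}$, the deconvolution/dependence remainder becomes $O((nh_n^{2\beta})^{-1/2})$, which vanishes automatically since $nh_n^{2\beta}\to+\infty$ by hypothesis.

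The hard part will be the smoothing-bias contribution $\sqrt{nh_n^{2\beta}}\cdot O(h_n^2)=O\!\bigl(\sqrt{nh_n^{2\beta+4}}\bigr)$: to drive it to zero I must invoke the undersmoothing calibration $nh_n^{2\beta+4}\to 0$, the standard bandwidth condition that permits centering at the true parameter rather than at its expectation. I would flag this explicitly, since the stated hypotheses only guarantee $\mathbb{E}[\lambda_n(x)]\to\lambda(x)$ (consistency of the mean), whereas the distributional centering in the corollary requires the \emph{scaled} bias to vanish; without $nh_n^{2\beta+4}\to 0$ the limit law would instead be a Gaussian shifted by the bracketed second-order term. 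Granting this condition, the full deterministic term is $o(1)$, and applying Slutsky's theorem to the decomposition above — a deterministic sequence tending to $0$ added to a sequence converging in distribution to $\mathcal{N}(0,\sigma^2(x))$ — delivers the claimed limit.
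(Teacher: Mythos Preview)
Your approach is exactly the paper's: the discussion immediately preceding Corollary~\ref{better} combines Propositions~\ref{bais1} and~\ref{bais2} to obtain $E[\lambda_n(x)]-\lambda(x)=O(h_n^2)+O((nh_n^{2\beta})^{-1})$ and then invokes Theorem~\ref{an}, which is precisely your decomposition-plus-Slutsky argument. Your observation about the undersmoothing condition $nh_n^{2\beta+4}\to 0$ is well taken --- the paper records only that the bias is $o(1)$ and does not explicitly check that the \emph{scaled} bias $\sqrt{nh_n^{2\beta}}\bigl(E[\lambda_n(x)]-\lambda(x)\bigr)$ vanishes, so you have in fact been more careful than the original and correctly identified a bandwidth requirement the paper leaves implicit.
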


\begin{remark}
	\label{remark3} The rate in Corollary \ref{better} is still slower compared to that asserted for the free-error framework in the i.i.d. and mixing
	concepts. Note that the resulting rate depend on the parameter $\beta $
	which pertains to the contamination mechanism. In addition, we remark that
	the larger $\beta $ is, the slower the rate of convergence become.
\end{remark}

Using the results in Corollary \ref{better}, Corollary \ref{CI} bellow gives
the confidence interval of asymptotic level $1-\zeta $ for $\lambda \left( x\right) $. To do so, we estimate the asymptotic variance $\sigma
^{2}\left( x\right) $\ using the following plug-in estimate: 
\[
\sigma _{n}^{2}\left( x\right) =\frac{D_{2}g_{n}\left( x\right) }{\left[
	1-F_{n}\left( x\right) \right] ^{2}}-2\frac{D_{2}f_{n}\left( x\right)
	G_{n}\left( x\right) }{\left[ 1-F_{n}\left( x\right) \right] ^{3}}+\frac{%
	D_{1}^{2}\left[ f_{n}\left( x\right) \right] ^{2}G_{n}\left( x\right) \left[
	1-G_{n}\left( x\right) \right] }{\left[ 1-F_{n}\left( x\right) \right] ^{4}},
\]

where $g_{n}\left( x\right) $ is a classical kernel type estimation of $%
g\left( x\right) $ and $G_{n}\left( x\right) :=\int_{-\infty
}^{x}g_{n}\left( u\right) du$.

\begin{corollary}
	\label{CI} Under the assumptions of Corollary \ref{better}, we can construct
	confidence intervals $IC_{\lambda _{n}}\left( x\right) $ for $\lambda \left( x\right) $ given by 
	\[
	IC_{\lambda _{n}}\left( x\right) =\left] \lambda _{n}\left( x\right)
	-z_{\left( 1-\zeta /2\right) }\frac{\sigma _{n}\left( x\right) }{\sqrt{%
			nh_{n}^{2\beta }}},\lambda _{n}\left( x\right) +z_{\left( 1-\zeta /2\right) }%
	\frac{\sigma _{n}\left( x\right) }{\sqrt{nh_{n}^{2\beta }}}\right[ ,
	\]
	
	where $z_{\left( 1-\zeta /2\right) }$ stands for the $\left( 1-\zeta
	/2\right) -$quantile of the standard normal distribution.
\end{corollary}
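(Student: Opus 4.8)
The plan is to combine the asymptotic normality of Corollary \ref{better} with the consistency of the plug-in variance estimator $\sigma_n^2(x)$, and then close the argument with Slutsky's theorem. Corollary \ref{better} already supplies
\[
\sqrt{nh_n^{2\beta}}\left(\lambda_n(x)-\lambda(x)\right)\overset{\mathcal{D}}{\longrightarrow}\mathcal{N}\left(0,\sigma^2(x)\right),
\]
so the only new ingredient is to show that replacing the unknown $\sigma(x)$ by the data-driven $\sigma_n(x)$ in the standardization does not change the limiting law. Assuming $\sigma^2(x)>0$, the construction of $IC_{\lambda_n}(x)$ will then be a routine inversion of the standard normal quantile $z_{(1-\zeta/2)}$.

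First, I would establish that each component entering $\sigma_n^2(x)$ converges in probability to its population counterpart, namely $f_n(x)\xrightarrow{P}f(x)$, $F_n(x)\xrightarrow{P}F(x)$, $g_n(x)\xrightarrow{P}g(x)$, and $G_n(x)\xrightarrow{P}G(x)$. The consistency of the deconvolving estimators $f_n$ and $F_n$ follows directly from the strong uniform consistency results of \cite{benjrada2022hazard} under the present associated–contaminated model. Since $g_n$ is a classical (non-deconvolving) kernel estimator of the observed density $g$ and $G_n$ is its integral, their consistency for associated data is standard under the dependence condition (\textbf{H5}), where the covariance summability controls the variance contribution.

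Second, I would note that $\sigma^2(\cdot)$, as expressed in Corollary \ref{corellary1}, is a continuous function of the quadruple $\left(f(x),F(x),g(x),G(x)\right)$ on the region where $1-F(x)>0$, which holds almost everywhere by assumption. By the continuous mapping theorem, the joint convergence in probability of the plug-in components yields $\sigma_n^2(x)\xrightarrow{P}\sigma^2(x)$, and hence $\sigma_n(x)\xrightarrow{P}\sigma(x)$ on the event $\sigma(x)>0$. Applying Slutsky's theorem to the ratio then gives
\[
\frac{\sqrt{nh_n^{2\beta}}\left(\lambda_n(x)-\lambda(x)\right)}{\sigma_n(x)}\overset{\mathcal{D}}{\longrightarrow}\mathcal{N}(0,1),
\]
so that $\mathbb{P}\left(\lambda(x)\in IC_{\lambda_n}(x)\right)\to 1-\zeta$, which is the assertion.

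The main obstacle is the consistency of the variance estimator rather than the probabilistic machinery: the Slutsky and continuous-mapping steps are immediate once the convergences are in hand. Concretely, the delicate part is verifying $g_n(x)\xrightarrow{P}g(x)$ and $G_n(x)\xrightarrow{P}G(x)$ for associated observations, since one must control the covariance terms in the variance of these kernel estimators using the summability in (\textbf{H5}); the consistency of $f_n$ and $F_n$ is already available from \cite{benjrada2022hazard}, so no fresh deconvolution analysis is required there.
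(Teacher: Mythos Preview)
Your proposal is correct and follows exactly the standard route one would take here. The paper itself does not supply an explicit proof of Corollary~\ref{CI}; it is stated as an immediate consequence of Corollary~\ref{better}, so your Slutsky-plus-consistency argument is precisely the omitted justification and is more detailed than what the paper provides.
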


\section{Future Directions}

This section explores several open questions and potential avenues for future research. We focus on two primary areas: 1) the case of supersmooth error, and 2) the case of non-standard measurement error.

\subsection{The Case of Supersmooth Error}

One promising direction for future research is the estimation of the hazard rate function from data corrupted by supersmooth error. Supersmooth error refers to scenarios where the characteristic function \(\phi_{r}\) of the measurement error decays exponentially fast as \(|t| \rightarrow +\infty\). Specifically, this occurs when the following condition is satisfied:
\[
\beta_{2}e^{-m|t|^{\alpha}}|t|^{\beta} \leq |\phi_{r}(t)| \leq \beta_{3}e^{-m|t|^{\alpha}}|t|^{\beta},
\]
where \(\alpha\), \(m\), \(\beta_2\), and \(\beta_3\) are positive constants, and \(\beta\) is a real number.

Supersmooth distributions include, for example, the Cauchy, Mixture Normal, and Normal densities. The constant \(\beta\) represents the order of the noise density \(r(x)\) and directly influences the convergence rate of the estimate \(\lambda_{n}(x)\). The smoother the error distribution, the slower the convergence rate of the estimator.

Deconvolution challenges are closely tied to the smoothness of the error distribution. Supersmooth distributions are notably more difficult to deconvolve than ordinary smooth distributions, as demonstrated in works such as \cite{Masry2003}.

\subsection{The Case of Non-Standard Error}

In the deconvolution literature, it is typically assumed that the noise density \(r(x)\) is known and that its Fourier transform \(\phi_{r}(t)\) has no vanishing points, i.e.,
\[
\phi_{r}^{-1}(\{0\}) := \{t \in \mathbb{R} : \phi_{r}(t) = 0\} = \varnothing.
\]
However, many error densities violate this assumption. Examples include triangular densities, their convolutions with arbitrary densities, and uniform densities. These are informally referred to as non-standard errors.

In such cases, ridge-parameter regularization is employed to estimate \(\phi_{r}(t)\). For further details on ridge-parameter regularization in deconvolution problems, readers are directed to \cite{Hall&Meister} and related references. To the best of our knowledge, no existing results address the estimation of the hazard rate function from data contaminated by non-standard errors. Addressing this gap represents a valuable research opportunity.

Another interesting direction is the estimation of the hazard rate function when the error distribution is unknown but estimable. In this framework, we assume the availability of an additional sample \(\{e_i\}_{i=1}^n\) drawn from the error distribution \(e\), collected from an independent trial. The proposed approach involves two steps: first, estimating \(r(x)\) using \(\{e_i\}_{i=1}^n\), and second, estimating the density \(f(x)\) using classical deconvolution methods. The cumulative distribution function \(F_n\) can then be obtained via the blogging method. This approach is effective regardless of whether the density \(r(x)\) is non-standard or not.

By addressing these open questions, future research can significantly advance the field of hazard rate estimation in the presence of additive measurement errors.

\section{Simulation study}

This section is motivated by the pointwise simulation of the hazard rate estimator \(\lambda_{n}(\cdot)\) with the goal of evaluating its performance quality and implementing the corresponding asymptotic normality. First, we employ a kernel with the characteristic function  
\[
\phi_{k}(t) = (1 - t^{2})^{3} \cdot \mathbf{1}_{[-1,1]}(t),
\]  
to evaluate the estimator and determine the asymptotic values \(D_{1}\) and \(D_{2}\) (as defined in equations \eqref{D1} and \eqref{D2}). These values correspond to the variances and covariance, respectively. The kernel \(\phi_{k}\) satisfies all the necessary assumptions required for our results to hold. For a detailed expression of the kernel function \(k(x)\), readers can refer to \cite{Fan1992}.  

The simulations in this study were conducted using the R package \texttt{deconvolve} \cite{delaigle2021deconvolve}. Additionally, MATLAB codes for reproducing the results are available on Delaigle's webpage.

\subsection{Designing the Simulated Data}

We provide detailed results for all possible scenarios of the hazard rate function estimator, covering cases commonly encountered in practice: Non-Monotonic Hazard Rate (NMHR), Increasing Hazard Rate (IHR), Decreasing Hazard Rate (DHR), and Constant Hazard Rate (CHR).

We avoid implementing the processes described in Examples \ref{Example1} and \ref{Example2} because the true hazard rate, which is necessary for evaluating the performance of our estimator, is unknown. Moreover, Example \ref{Example1} is infeasible to simulate in practice, as it requires an infinite sequence \( \{\varepsilon_{i}\}_{i>1} \). The primary reason for excluding these examples is the lack of a known true hazard rate.

To ensure positive association (PA), we define \( X_j = \exp(Z_j) \), where
\[
Z_{j} = \frac{\varepsilon_{j-1} + \varepsilon_{j-2}}{2},
\]
and \( \{\varepsilon_j\}_{j=1}^{n} \) is an i.i.d. sequence drawn from a standard Gaussian process. In this case, the random variables \( \{X_{i}\}_{i=1}^{n} \) follow a \( \text{LogNormal}(0,1) \) distribution. The hazard rate function in this example is NMHR and remains relatively flat around its peak, making it particularly suitable for comparative analysis in this region.

Additionally, we conduct experiments to analyze other behaviors of the hazard rate function. For this, we employ the Weibull distribution, denoted as \( W(a, b) \), where \( a \) is the shape parameter and \( b \) is the scale parameter. Since the scale parameter does not affect the shape of the hazard rate, we fix \( b = 1 \). We have the following cases: - When \( a > 1 \), the Weibull distribution exhibits an IHR.
- When \( a < 1 \), it exhibits a DHR.
- When \( a = 1 \), it reduces to the exponential distribution, corresponding to a CHR.

\textbf{Summary of Our Scenarios}:
\begin{itemize}
	\item \textbf{Increasing Hazard Rate (IHR)}: \( X \sim W(1.5,1) \)
	\item \textbf{Decreasing Hazard Rate (DHR)}: \( X \sim W(0.75,1) \)
	\item \textbf{Constant Hazard Rate (CHR)}: \( X \sim W(1,1) \)
	\item \textbf{Non-Monotonic Hazard Rate (NMHR)}: \( X_j = \exp(Z_j) \), where \( Z_j \) is defined as above.
\end{itemize}

These scenarios encompass a broad range of practical applications, allowing us to thoroughly evaluate the performance of the estimator under various conditions.

Next, we generate the measurement errors \( e \) as an i.i.d. random sequence \( \{ e_i \}_{i=1}^{n} \) drawn from a Laplacian distribution. The Laplacian distribution is chosen because it belongs to the class of \textit{ordinary smooth errors}, which frequently arise in practical applications. Notably, when the errors \( \{ e_i \}_{i\geq 1} \) follow a Laplacian distribution, the associated \textit{smoothness parameter} is \( \beta = 2 \).  

The variance of the measurement errors is regulated by the noise-to-signal ratio (NSR), defined as $NSR = \frac{\sigma_e}{\sigma_X},$ where \( \sigma_e \) and \( \sigma_X \) denote the standard deviations of the measurement error and the underlying random variable \( X \), respectively. In this study, we examine three levels of contamination: \( NSR = 0.1 \), \( 0.2 \), and \( 0.5 \), corresponding to \textbf{10\%}, \textbf{20\%}, and \textbf{50\%} noise contamination, respectively.

Using the data \(\{ Y_i = X_i + e_i \}_{i=1}^{n}\), the hazard rate estimator \(\lambda_n(x)\) is computed by varying \(x\) over a grid of points in \(\Omega = \{ x \in [0:0.01:6] \}\). To evaluate the impact of sample size on the performance of the estimator, we consider \(n = 1000\), \(2000\), and \(5000\). As highlighted in \cite{Phuong2019}, estimation from contaminated data converges slowly to the target \(X\)-function. Therefore, a large sample size is required to achieve an estimator with good performance, which explains the exclusion of smaller sample sizes in our analysis.

To visually illustrate the influence of the sample size \( n \) and the NSR on the quality of fit of the estimator, we present side-by-side plots of \( \lambda_n(x) \) and the true hazard rate function \( \lambda(x) \) (corresponding to NSR = 0), as shown in Figure \ref{F_Vs_Fn}.  
\begin{figure}[]
	\centering
	\begin{subfigure}[b]{\linewidth}
		\centering
		\includegraphics[width=4.5cm,height=5cm]{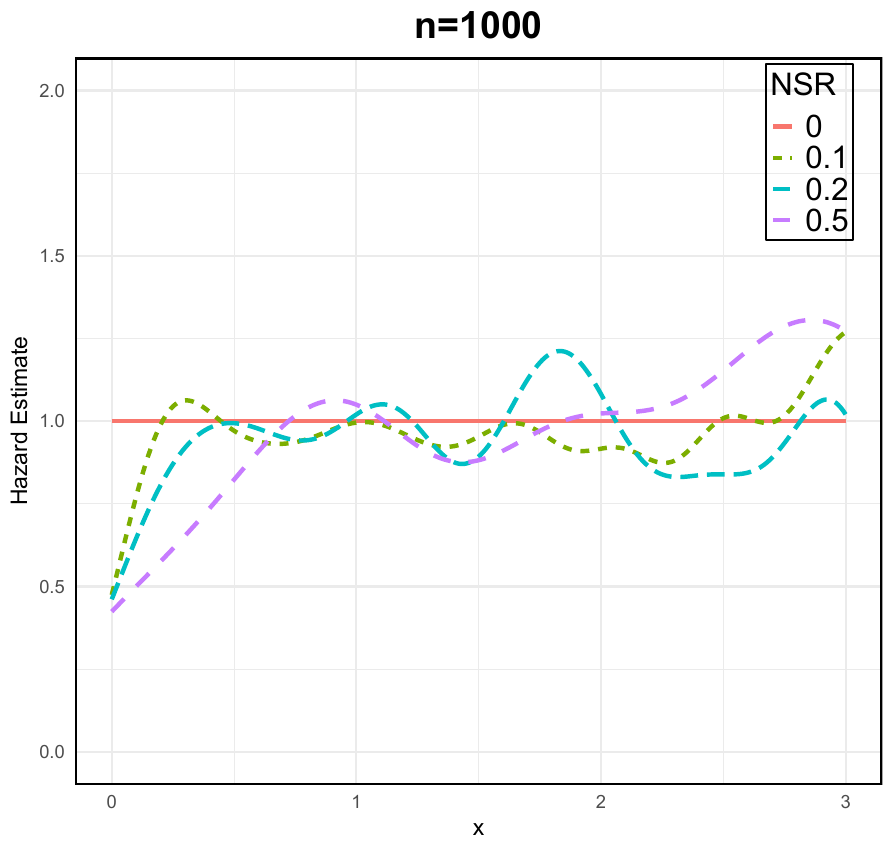}
		\includegraphics[width=4.5cm,height=5cm]{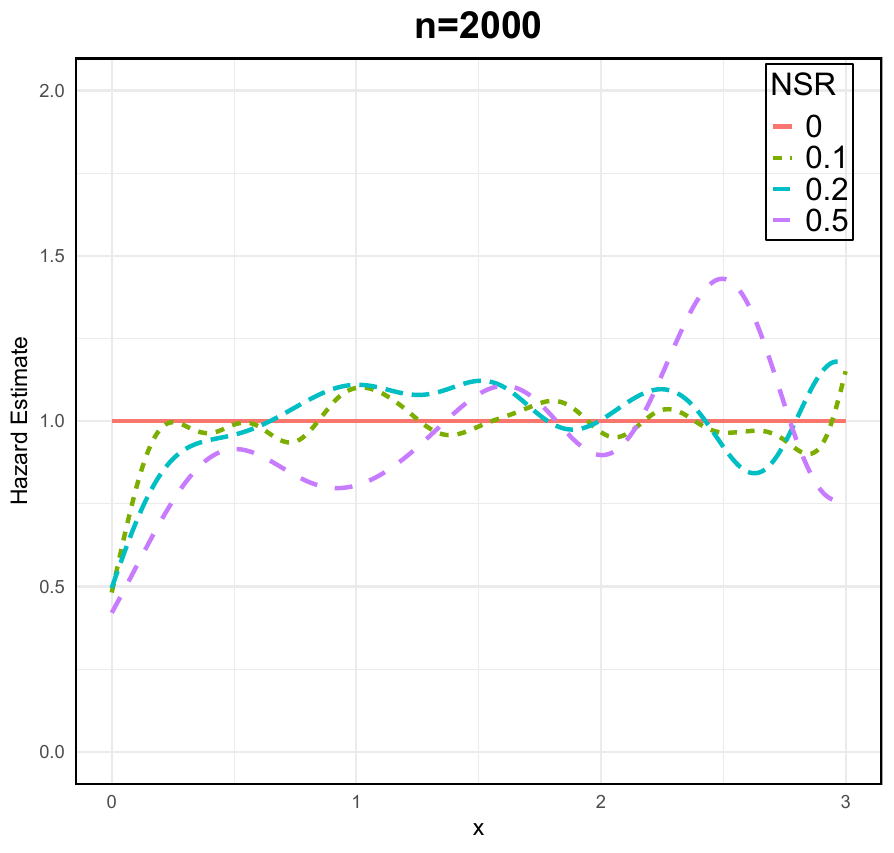}
		\includegraphics[width=4.5cm,height=5cm]{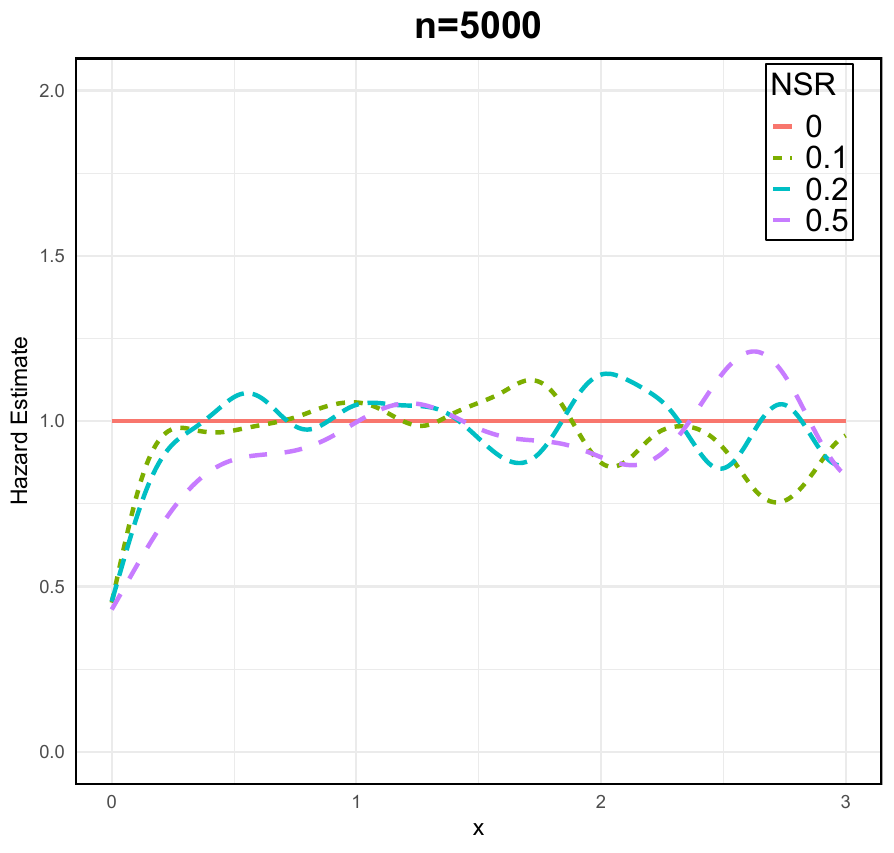}
		\caption{$W\left( 1 ,1\right) $}
	\end{subfigure}
	
	\begin{subfigure}[b]{\linewidth}
		\centering
		\includegraphics[width=4.5cm,height=5cm]{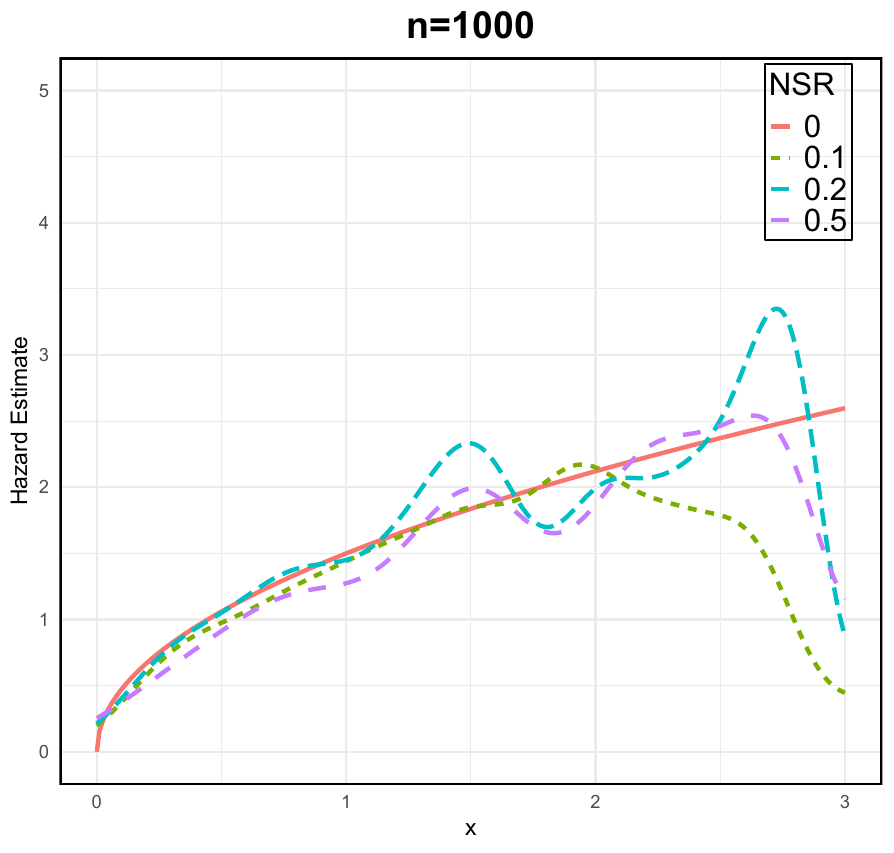}
		\includegraphics[width=4.5cm,height=5cm]{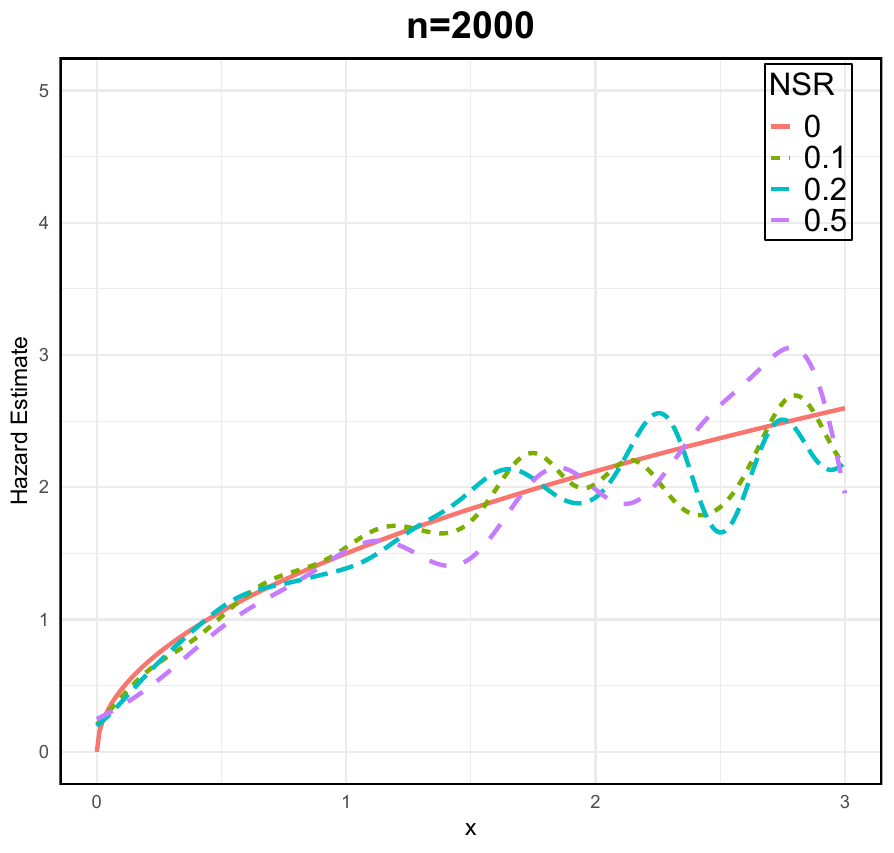}
		\includegraphics[width=4.5cm,height=5cm]{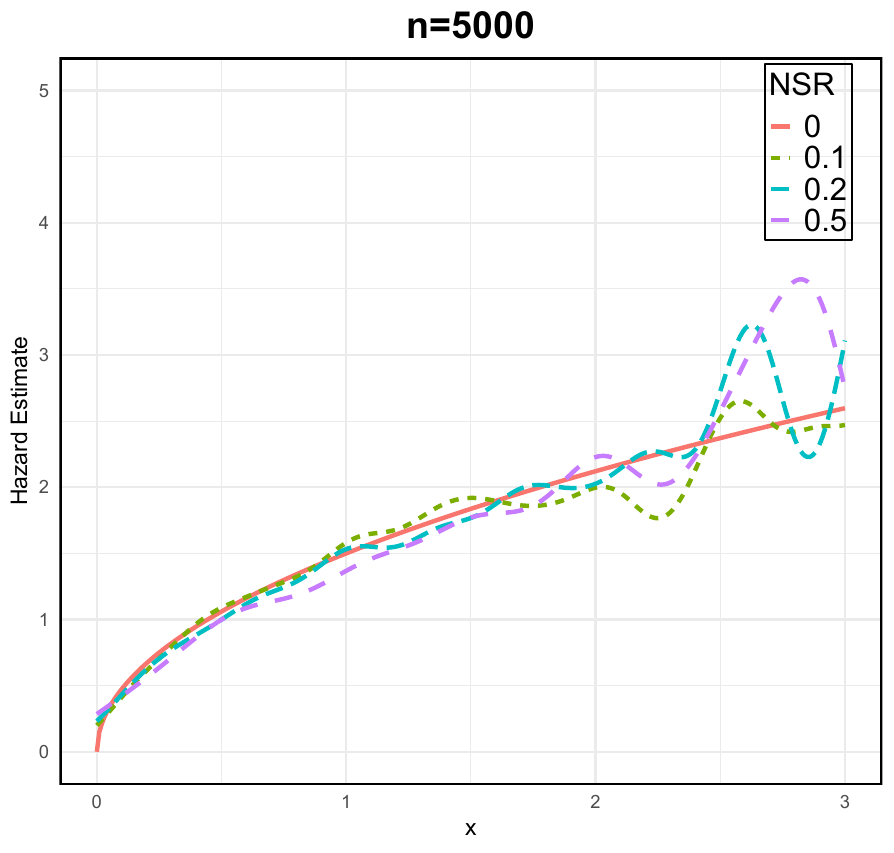}
		\caption{Weibull $W\left( 1.5 ,1\right) $}
	\end{subfigure}
	
	\begin{subfigure}[b]{\linewidth}
		\centering
		\includegraphics[width=4.5cm,height=5cm]{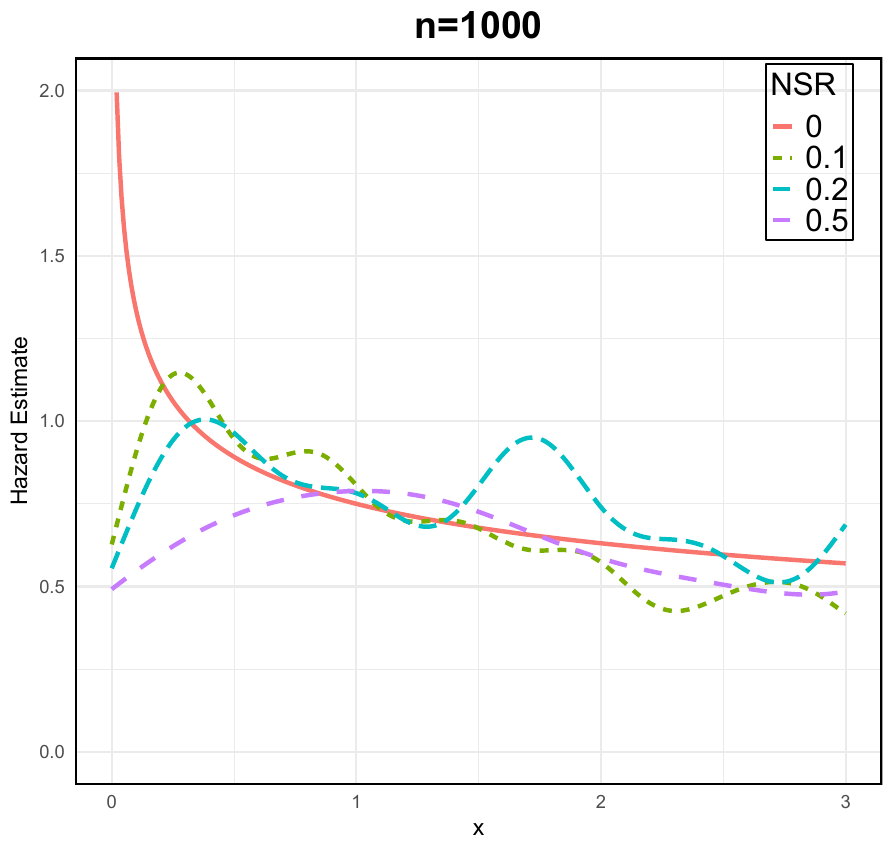}
		\includegraphics[width=4.5cm,height=5cm]{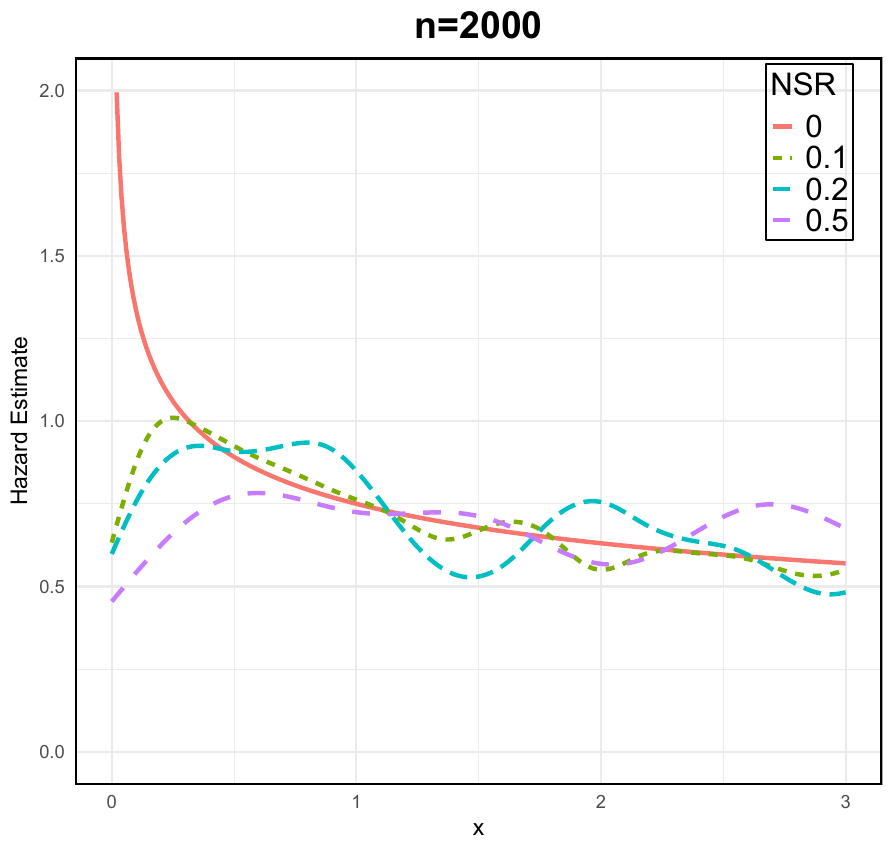}
		\includegraphics[width=4.5cm,height=5cm]{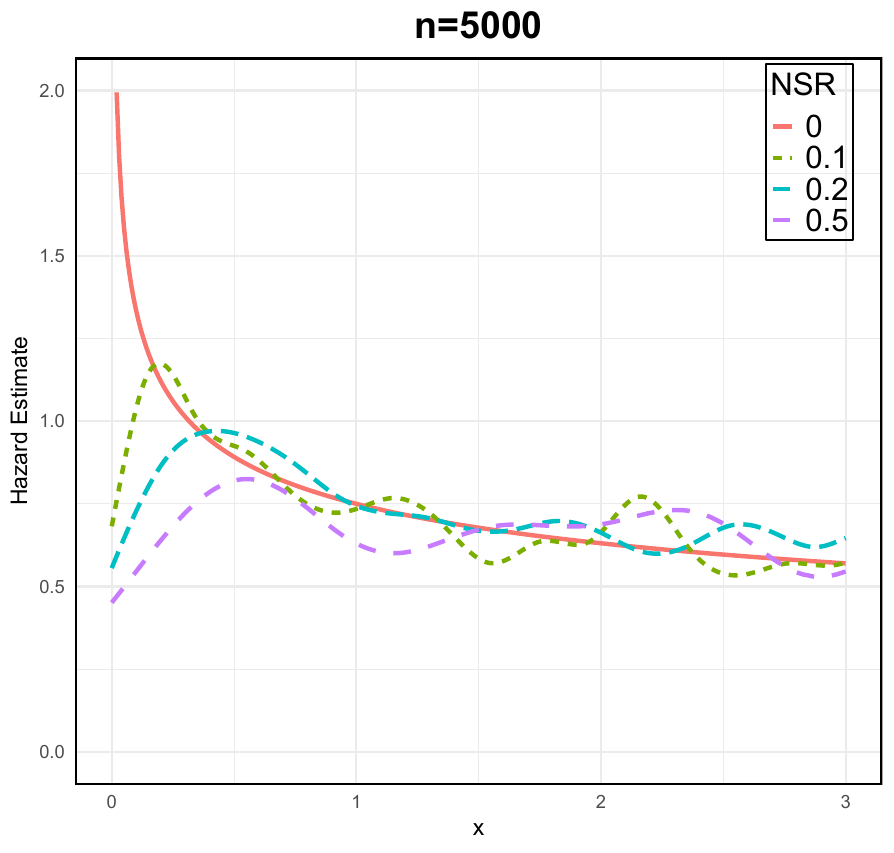}
		\caption{Weibull $W\left( 0.75 ,1\right) $}
	\end{subfigure}
	
	\begin{subfigure}[b]{\linewidth}
		\centering
		\includegraphics[width=4.5cm,height=5cm]{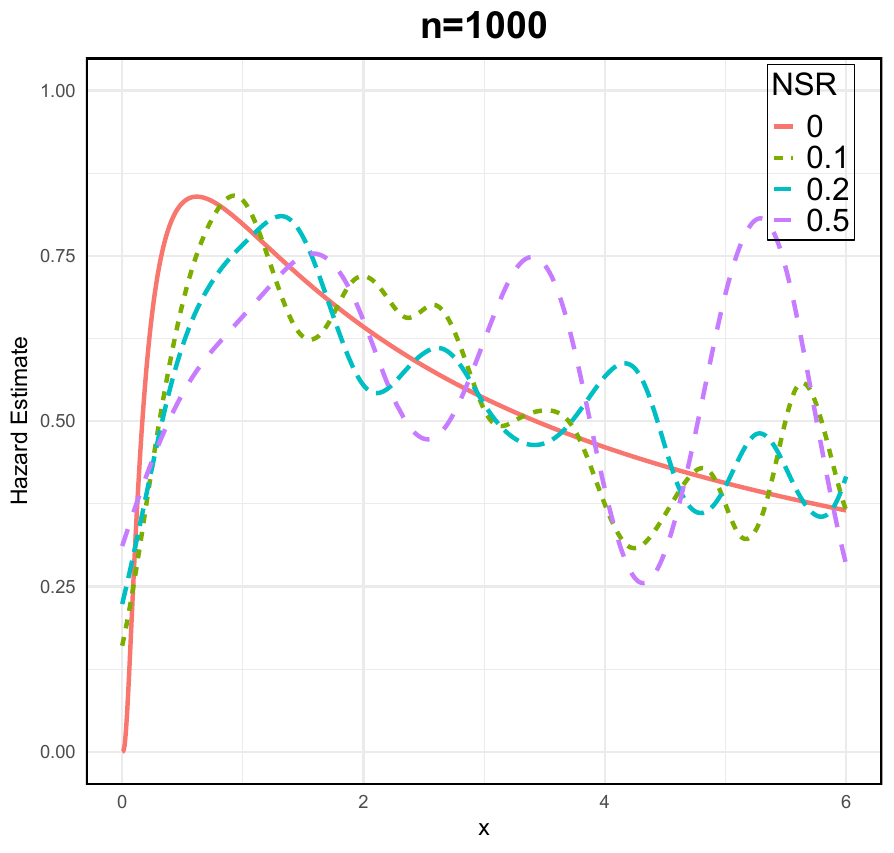}
		\includegraphics[width=4.5cm,height=5cm]{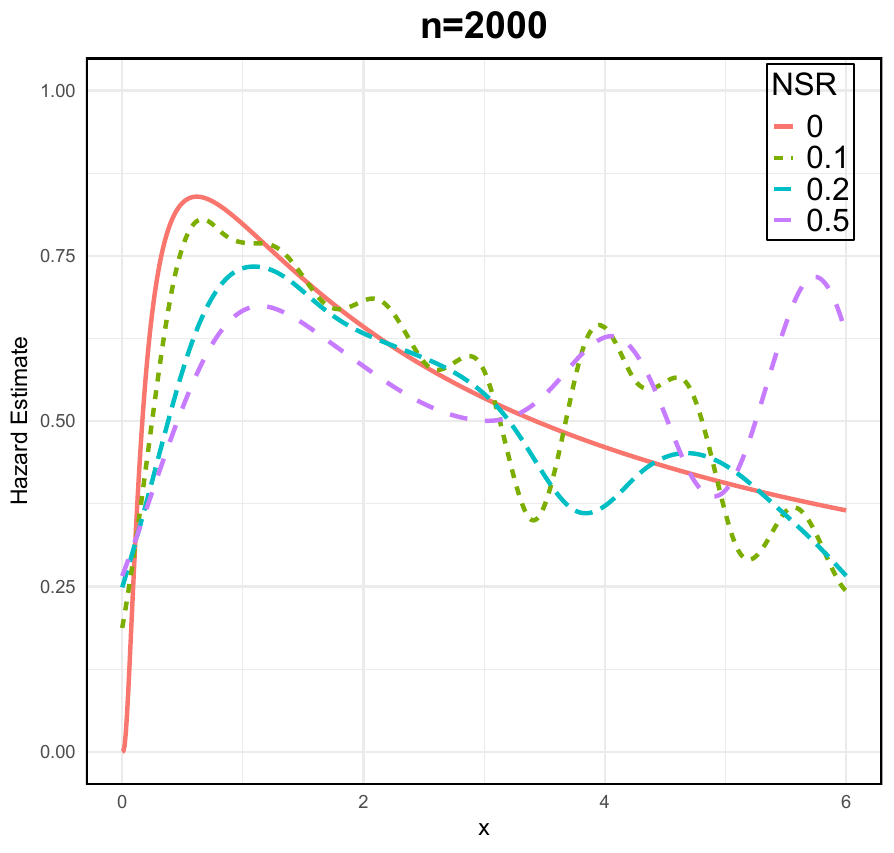}
		\includegraphics[width=4.5cm,height=5cm]{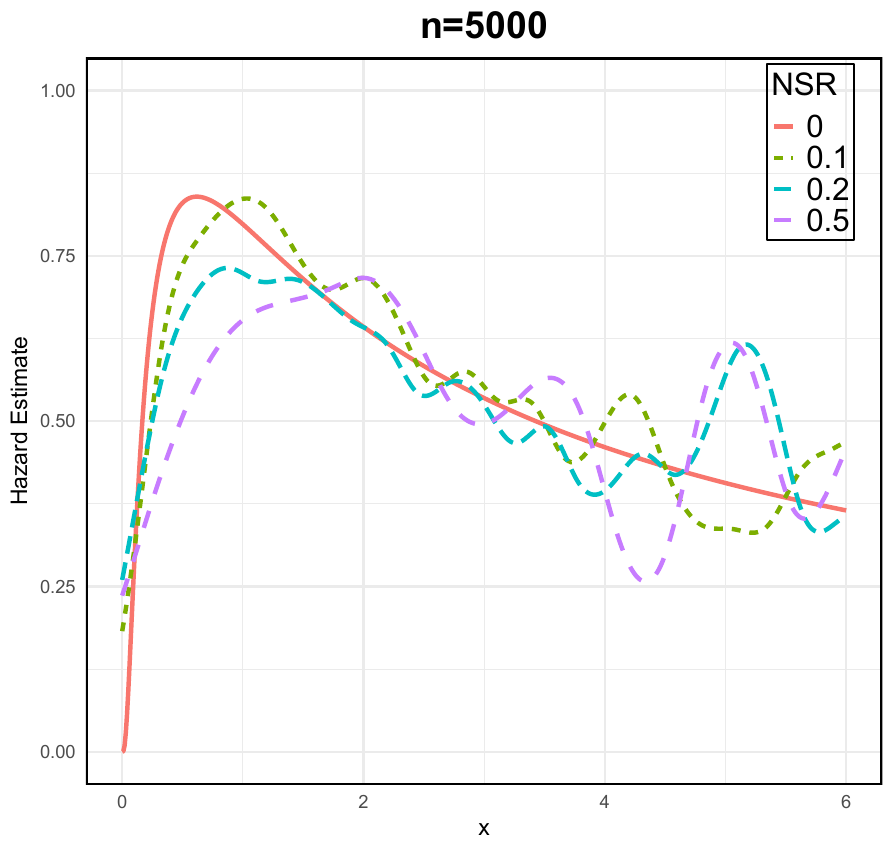}
		\caption{\( \text{LogNormal}(0,1) \)} % <- Check if this caption should be different
	\end{subfigure}
	
	\caption{ The true hazard rate function (corresponding to NSR = 0) is compared against its estimates obtained under different NSR.}
	\label{F_Vs_Fn}
\end{figure}

Summarizing the key findings from these plots:  
First, the hazard rate estimator demonstrates a strong ability to recover the true function across different scenarios. Next, the estimation performance remains acceptable even for \( \text{NSR} = 50\% \), and it significantly improves as the sample size \( n \) increases. In general, lower NSR values and larger sample sizes lead to better estimation accuracy.  

Additionally, we highlight that the quality of the estimation depends directly on the shape of the true hazard rate function. Specifically, NMHR and IHR tend to be estimated more accurately and rapidly compared to CHR and DHR.

\subsection{Estimation with Large Sample Sizes}

From the plots in Figure~\ref{F_Vs_Fn}, we observe that the estimator exhibits convergence for large values of \( n \), particularly when the Noise-to-Signal Ratio (NSR) is low. In this subsection, we extend our analysis to large sample sizes to investigate the asymptotic behavior of the estimator. Due to the computational complexity of the estimation process, substantial resources are required. To efficiently handle these computations, we implement the estimation procedure in MATLAB, leveraging the Fast Fourier Transform (FFT) for improved performance. 

Furthermore, we restrict our analysis to the NMHR case, as other scenarios yield similar results and lead to the same interpretation. The findings of this study are summarized in the plots presented in Figure~\ref{LS}.

\begin{figure}[h]
	\centering
	
	\begin{subfigure}[b]{0.3\textwidth}
		\centering
		\includegraphics[width=5cm,height=5.5cm]{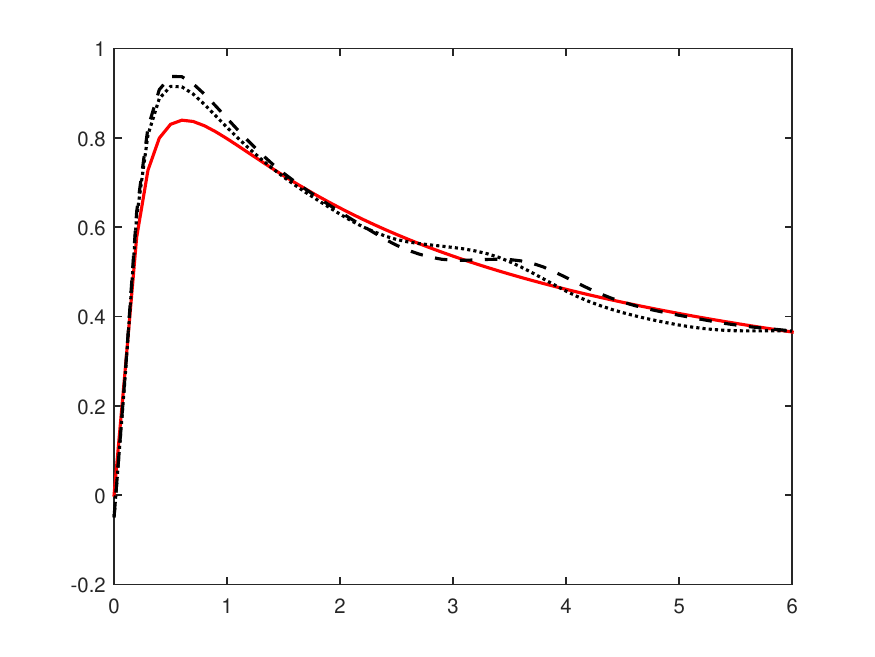}
		\caption{\( \text{NSR} = 0.1 \)}
	\end{subfigure}
	\hspace{0.5cm}
	\begin{subfigure}[b]{0.3\textwidth}
		\centering
		\includegraphics[width=5cm,height=5.5cm]{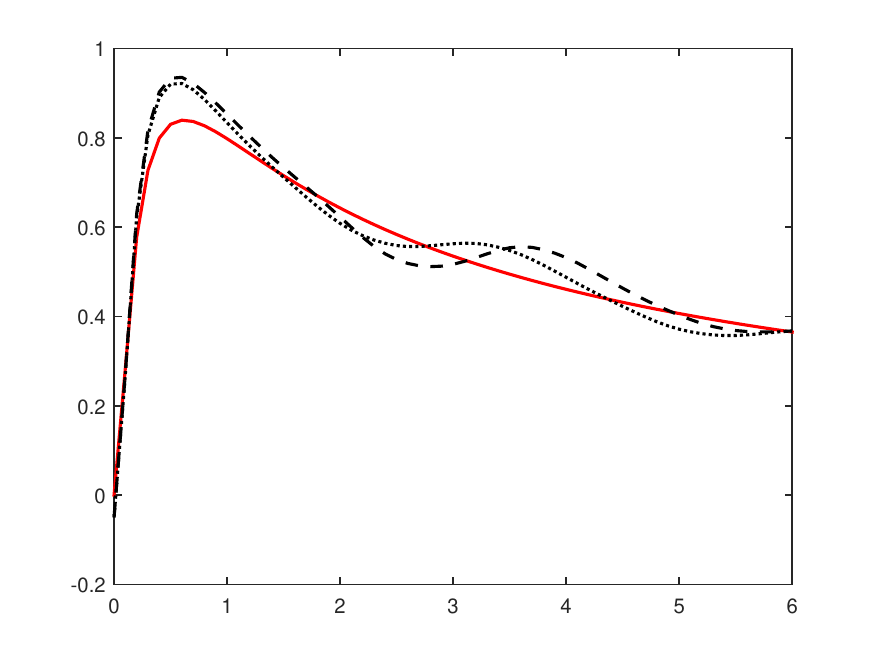}
		\caption{\( \text{NSR} = 0.2 \)}
	\end{subfigure}
	\hspace{0.5cm} 
	\begin{subfigure}[b]{0.3\textwidth}
		\centering
		\includegraphics[width=5cm,height=5.5cm]{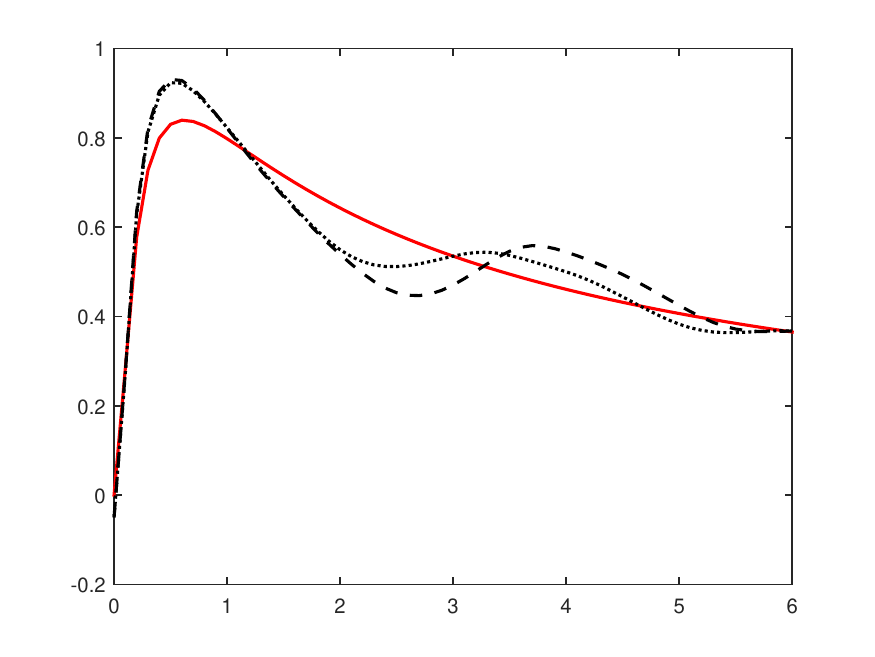}
		\caption{\( \text{NSR} = 0.5 \)}
	\end{subfigure}
	
	\caption{Hazard rate estimator computed from large sample size 
		observations contaminated by additive noise with \( \text{NSR} = 0.1 \), \( 0.2 \), and \( 0.5 \). 
		The solid line represents the true hazard rate function, the dashed line corresponds to 
		the hazard rate estimator for \( n = 10000 \), and the dotted line represents 
		the hazard rate estimator for \( n = 15000 \).}
	\label{LS}
\end{figure}

As shown in Figure~\ref{LS}, the estimation from samples with low contamination (\(\text{NSR} = 10\%\)) exhibits strong performance, improving progressively as \( n \) increases. Likewise, the quality of the fit enhances significantly with larger sample sizes, suggesting that the estimation error becomes negligible for sufficiently large \( n \). Additionally, we observe that the fit deteriorates as the NSR increases. However, for larger values of \( n \), the impact of NSR on the fit becomes less pronounced.

\subsection{Asymptotic Normality and Confidence Interval}

In this subsection, we investigate the asymptotic normality of the hazard rate estimator using normal probability plots, comparing the empirical distributions to the standard normal distribution. This analysis is conducted for Laplacian errors with \( \text{NSR} = 0.25 \) and \( M = 500 \) replications of samples of size \( n \), as illustrated in Figure~\ref{AN}.

Additionally, we construct confidence intervals for the hazard rate based on \( \lambda(0.5) \) in the case of NMHR. To achieve this, we generate \( M = 1000 \) replications of samples of size \( n \) (\( n = 1000 \), \( 2000 \), and \( 5000 \)) under the contamination scenario described earlier. The coverage probabilities (\( CP \)) and average lengths (\( AL \)) of these confidence intervals are summarized in Table~\ref{tab2}.
\begin{table}[]
	\caption{The coverage probabilities and average lengths
		of $95\%$ confidence intervals of $\protect\lambda \left( 0.5\right) $ .}
	\label{tab2}%
	\begin{center}
		\begin{tabular}{lrlll}
			\hline
			& \multicolumn{1}{l}{%
				\begin{tabular}{@{}c}
					$NSR$ \\ 
					$n$%
				\end{tabular}%
			} & 
			\begin{tabular}{c}
				0.1 \\ \hline
				$AL$ \qquad $CP$%
			\end{tabular}
			& 
			\begin{tabular}{c}
				0.25 \\ \hline
				$AL$ \qquad $CP$%
			\end{tabular}
			& 
			\begin{tabular}{c}
				0.5 \\ \hline
				$AL$ \qquad $CP$%
			\end{tabular}
			\\ \hline
			& 1000 & 
			\begin{tabular}{cl}
				0.0781 & 0.941%
			\end{tabular}
			& 
			\begin{tabular}{cl}
				0.0836 & 0.938%
			\end{tabular}
			& 
			\begin{tabular}{cl}
				0.0871 & 0.931%
			\end{tabular}
			\\ 
			& 2000 & 
			\begin{tabular}{cl}
				0.0561 & 0.945%
			\end{tabular}
			& 
			\begin{tabular}{cl}
				0.0578 & 0.941%
			\end{tabular}
			& 
			\begin{tabular}{cl}
				0.0612 & 0.937%
			\end{tabular}
			\\ 
			&5000 & 
			\begin{tabular}{cl}
				0.0521 & 0.958%
			\end{tabular}
			& 
			\begin{tabular}{cl}
				0.0571 & 0.945%
			\end{tabular}
			& 
			\begin{tabular}{cl}
				0.0688 & 0.946%
			\end{tabular}
			\\ \hline
		\end{tabular}%
	\end{center}
\end{table}

\begin{figure}[h]
	\centering
	
	\begin{subfigure}[b]{0.3\textwidth}
		\centering
		\includegraphics[width=4.5cm,height=5.5cm]{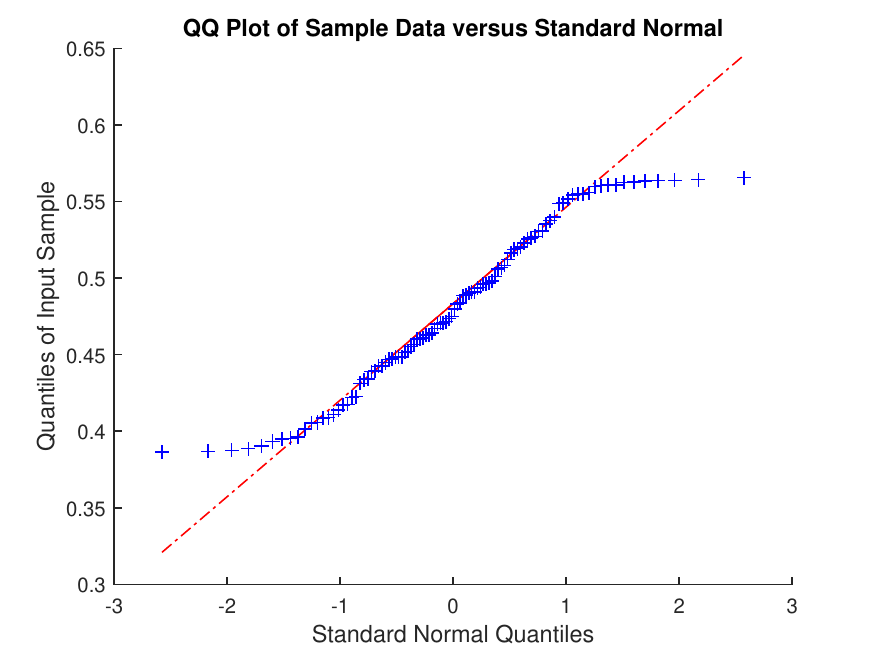}
		\caption{\( n = 1000 \)}
		\label{fig:n1000}
	\end{subfigure}
	\hspace{0.3cm}  
	\begin{subfigure}[b]{0.3\textwidth}
		\centering
		\includegraphics[width=4.5cm,height=5.5cm]{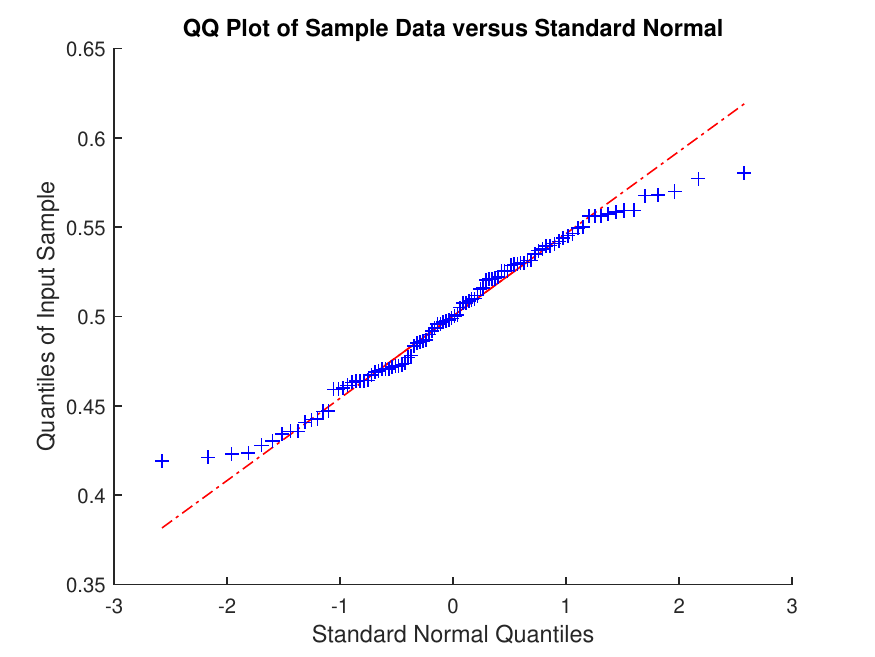}
		\caption{\( n = 2000 \)}
		\label{fig:n2000}
	\end{subfigure}
	\hspace{0.3cm}  
	\begin{subfigure}[b]{0.3\textwidth}
		\centering
		\includegraphics[width=4.5cm,height=5.5cm]{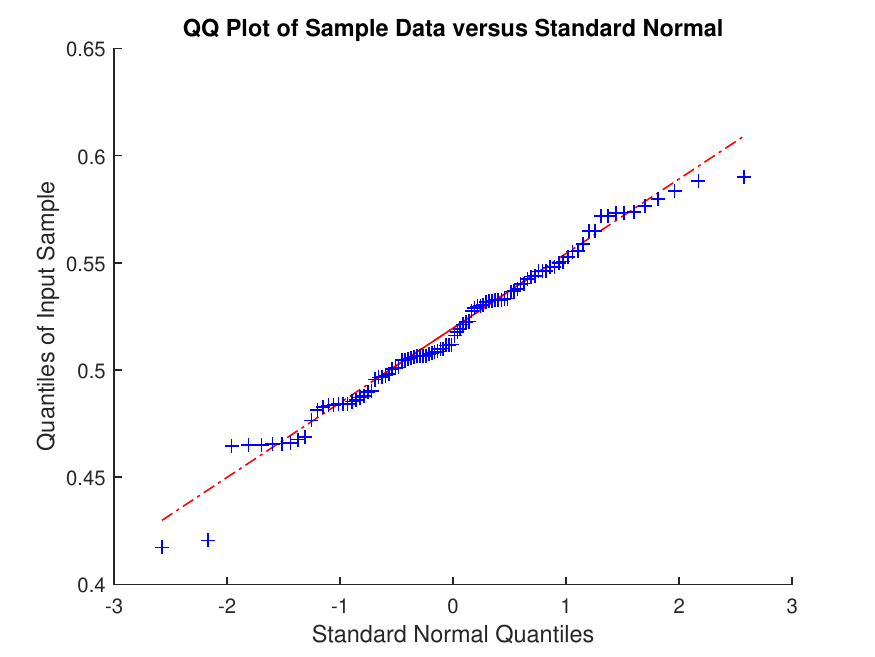}
		\caption{\( n = 5000 \)}
		\label{fig:n5000}
	\end{subfigure}
	
	\caption{Normal-probability plots of \( \lambda _{n}(0.5) \) based on Laplacian errors with \( \text{NSR} = 0.1 \).}
	\label{AN}
\end{figure}

As to the asymptotic normality, Figure \ref{AN}
shows that the sampling distribution of the hazard rate estimator matches
the Gaussian distribution. This match increases and becomes better along
with $n$. And for confidence intervals, Table \ref{tab2} confirms that the
coverage probabilities ($CP$) rise together with the sample size $n$. On the
other hand, we notice that the average lengths ($AN$) decrease reversely
with the sample size $n$.

\section{Proofs and Auxiliary Results}

To evaluate the precise asymptotic expression of $\text{Var}(F_{n}(x))$ in Theorem %
\ref{variance}, one shall establish an approximation of the identity. To do
so, we work analogously as in Lemma 1 of \cite{Masry2003}. Lemma \ref{ident}
hereafter is in order.

\begin{lemma}
	\label{ident} Under conditions (\textbf{H1}), (\textbf{H3}) and (\textbf{H4}), for $l=1,2$, we have
	
	\[
	h_{n}^{l\beta }E\left[ \left( M_{h_{n},1}\left( x\right) \right) ^{l}\right]
	\rightarrow G(x)D_{1}^{l},
	\]%
	where $D_{1}$ is defined in (\ref{D1}).
\end{lemma}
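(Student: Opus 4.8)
The plan is to recognise Lemma \ref{ident} as an \emph{approximation-of-the-identity} statement for the integrated deconvolving kernel, proved along the lines of Lemma~1 in \cite{Masry2003}. Conditioning on $Y_{1}$ and using $M_{h_{n},1}(x)=M_{h_{n}}\!\left(\frac{x-Y_{1}}{h_{n}}\right)$, I would first record the exact identity
\begin{equation*}
h_{n}^{l\beta}E\left[\left(M_{h_{n},1}(x)\right)^{l}\right]=\int_{-\infty}^{+\infty}\left[h_{n}^{\beta}M_{h_{n}}\!\left(\frac{x-y}{h_{n}}\right)\right]^{l}g(y)\,dy,\qquad l=1,2,
\end{equation*}
so that the whole statement reduces to understanding the limiting profile of the rescaled integrated kernel $h_{n}^{\beta}M_{h_{n}}\!\left(\frac{x-y}{h_{n}}\right)$ together with a dominated-convergence argument in $y$.

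The first genuine step is the pointwise approximation of identity for $W_{h_{n}}$ itself. Using the inversion formula of Remark \ref{pdf}, $h_{n}^{\beta}W_{h_{n}}(w)=\frac{1}{2\pi}\int_{-\infty}^{+\infty}e^{-itw}\,\frac{h_{n}^{\beta}\phi_{k}(t)}{\phi_{r}(t/h_{n})}\,dt$, and invoking (\textbf{H1})-2 in the form $\phi_{r}(t/h_{n})\sim\beta_{1}(h_{n}/t)^{\beta}$ as $h_{n}\to 0$, the integrand converges to $\frac{1}{2\pi\beta_{1}}e^{-itw}t^{\beta}\phi_{k}(t)$; since $\beta$ is even and $\phi_{k}$ is even the imaginary part cancels and the limit is exactly $L(w)$ from (\ref{L}). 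The integrability conditions in (\textbf{H3}) furnish an integrable majorant uniform in $n$, so dominated convergence yields $h_{n}^{\beta}W_{h_{n}}(w)\to L(w)$ and, after integrating, a uniform bound $\sup_{n,w}\left|h_{n}^{\beta}M_{h_{n}}(w)\right|\le C$, with $D_{1}=\int_{-\infty}^{+\infty}L(s)\,ds$ appearing as the total integral.

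Next I would transfer this to the scaled argument $w=(x-y)/h_{n}$. For $y<x$ one has $w\to+\infty$ and for $y>x$ one has $w\to-\infty$, and the aim is to show that $h_{n}^{\beta}M_{h_{n}}\!\left(\frac{x-y}{h_{n}}\right)$ tends to $D_{1}$ on $\{y<x\}$ and to $0$ on $\{y>x\}$, i.e.\ to $D_{1}\,\mathbf{1}_{\{y<x\}}$ almost everywhere. Granting this, the integrand in the displayed identity is bounded by $C^{l}g(y)\in L^{1}(\R)$, so dominated convergence gives
\begin{equation*}
h_{n}^{l\beta}E\left[\left(M_{h_{n},1}(x)\right)^{l}\right]\longrightarrow\int_{-\infty}^{+\infty}D_{1}^{l}\,\mathbf{1}_{\{y<x\}}\,g(y)\,dy=D_{1}^{l}\,G(x),
\end{equation*}
which is the asserted limit for both $l=1$ and $l=2$.

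The hard part is exactly the behaviour of the integrated kernel at arguments diverging with $h_{n}$: unlike $W_{h_{n}}$, the primitive $M_{h_{n}}$ does not decay but approaches a plateau, so one cannot simply insert the pointwise limit $L$ and integrate. To control $\int_{w}^{+\infty}h_{n}^{\beta}W_{h_{n}}(s)\,ds$ when $w=(x-y)/h_{n}\to+\infty$, I would pass to the frequency side and integrate by parts twice, using precisely the derivative bounds $\int|t|^{\beta-2}|\phi_{k}(t)|\,dt$, $\int|t|^{\beta-1}|\phi_{k}'(t)|\,dt$ and $\int|t|^{\beta}|\phi_{k}''(t)|\,dt$ of (\textbf{H4}) to produce a tail estimate of the form $\left|h_{n}^{\beta}W_{h_{n}}(w)\right|\le C(1+w^{2})^{-1}$, uniform in $n$; a Riemann--Lebesgue argument then disposes of the rapidly oscillating contributions $e^{-itw}$ and pins down the limiting profile $D_{1}\mathbf{1}_{\{y<x\}}$. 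This is the step that (\textbf{H4}) was designed for, as noted in the comments on the hypotheses, and it is where the main technical effort lies; the remaining passages are routine once the uniform tail bound and the almost-everywhere limit are in place.
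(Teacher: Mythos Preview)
Your proposal is correct and follows essentially the same route as the paper: both rest on the pointwise limit $h_{n}^{\beta}W_{h_{n}}\to L$ from (\textbf{H1})-2, a uniform bound on $h_{n}^{\beta}M_{h_{n}}$, and dominated convergence against $g$. The only difference is organizational---the paper splits the outer integral at $s=x$ and the inner one symmetrically at $\pm(x-s)/h_{n}$, producing three pieces $I_{n,1}',I_{n,1}'',I_{n,2}$ treated separately, whereas you argue in one pass that $h_{n}^{\beta}M_{h_{n}}\!\left(\tfrac{x-y}{h_{n}}\right)\to D_{1}\mathbf{1}_{\{y<x\}}$; you are in fact more explicit than the paper about the role of (\textbf{H4}) in supplying the integrable majorant (the paper simply invokes DCT for $I_{n,1}''$ without naming one).
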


\begin{proof}[Proof of Lemma \protect\ref{ident}]

	We split the integral as follows:
	\[
	\begin{aligned}
		\lim_{n \rightarrow +\infty} h_n^\beta W_{h_n}(u) 
		&= \lim_{n \rightarrow +\infty} \frac{h_n^\beta}{2\pi} 
		\int_{-\infty}^{+\infty} \cos(tu) \frac{\phi_k(t)}{\phi_r(t/h_n)} \, dt \\
		&\quad + \lim_{n \rightarrow +\infty} \frac{h_n^\beta}{2\pi} i 
		\int_{-\infty}^{+\infty} \sin(tu) \frac{\phi_k(t)}{\phi_r(t/h_n)} \, dt \\
		&=: l_1 + l_2.
	\end{aligned}
	\]
	
	First, using \textbf{(H1)}-2) together with the dominated convergence theorem, we obtain:
	\[
	\begin{aligned}
		l_1 &= \lim_{n \rightarrow +\infty} \frac{1}{2\pi} 
		\int_{-\infty}^{+\infty} \cos(tu) \frac{t^\beta \phi_k(t)}{(t/h_n)^\beta \phi_r(t/h_n)} \, dt \\
		&= \frac{1}{2\pi \beta_1} 
		\int_{-\infty}^{+\infty} \cos(tu) t^\beta \phi_k(t) \, dt.
	\end{aligned}
	\]
	
	Since \(\phi_k\) is an even function (as stated in \textbf{(H2)}) and \(\beta\) is an even integer, the integrand \(\cos(tu) t^\beta \phi_k(t)\) is also an even function. Therefore:
	\[
	l_1 = L(u),
	\]
	where \(L(u)\) is defined in \(\eqref{L}\). By similar steps, we obtain:
	\[
	l_{2} = \frac{i}{2\pi \beta_{1}} \int_{-\infty}^{+\infty} t^{\beta} \sin(tu) \phi_k(t) \, dt.
	\]
	
	The term \( l_{2} \) vanishes because the integrand \( t^{\beta} \sin(tu) \phi_k(t) \) is an odd function. Therefore, we can conclude that:
	\begin{equation}\label{Result1}
		\lim_{n \rightarrow +\infty} h_n^\beta W_{h_n}(u) = L(u).
	\end{equation}
	
	Now, for \( l = 1 \), it is easy to see that  
	\[
	h_{n}^{\beta} E\left[ M_{h_{n}, 1}(x) \right] = \int_{-\infty}^{+\infty} \left( \int_{-\infty}^{(x - s) / h_n} h_{n}^{\beta} W_{h_n}(u) \, du \right) g(s) \, ds.
	\]
	
	Next, we split the interval of integration appropriately as follows:  
	\begin{eqnarray}
		h_{n}^{\beta} E\left[ M_{h_{n}, 1}(x) \right] &=& \int_{-\infty}^{x} \left( \int_{-\infty}^{(x - s) / h_n} h_{n}^{\beta} W_{h_n}(u) \, du \right) g(s) \, ds \nonumber \\
		&& + \int_{x}^{+\infty} \left( \int_{-\infty}^{(x - s) / h_n} h_{n}^{\beta} W_{h_n}(u) \, du \right) g(s) \, ds \nonumber \\
		&=& I_{n, 1} + I_{n, 2}.
	\end{eqnarray}
	
	Now, consider \( I_{n, 2} \). Since we are integrating with respect to \( s \) from \( x \) to \( +\infty \), it follows that \( x - s \leq 0 \) in this range. Therefore,  
	\[
	\frac{x - s}{h_n} \to -\infty \quad \text{as} \quad n \to +\infty.
	\]
	From Remark \ref{pdf}, we observe that  
	$\left| \int_{-\infty}^{(x - s) / h_n} W_{h_n}(u) \, du \right| \leq 1.$  
	Moreover, since $\int_{x}^{+\infty} g(s) \, ds = 1 - G(x) < \infty,$ we can apply the dominated convergence theorem and obtain the following result:
	
	\[
	\lim_{n \to \infty} I_{n, 2} = \int_{x}^{+\infty} \left( \lim_{n \to \infty} \int_{-\infty}^{(x - s) / h_n} h_{n}^{\beta} W_{h_n}(u) \, du \right) g(s) \, ds \to 0 \quad \text{as} \quad n \to +\infty.
	\]

	For \( I_{n,1} \), we split the interval of integration as follows:
	\[
	\begin{aligned}
		I_{n,1} &= \int_{-\infty}^{x} \left( \int_{-\infty}^{-(x-s)/h_n} h_n^\beta W_{h_n}(u) \, du \right) g(s) \, ds + \int_{-\infty}^{x} \left( \int_{-(x-s)/h_n}^{(x-s)/h_n} h_n^\beta W_{h_n}(u) \, du \right) g(s) \, ds \label{e3} \\
		&=: I_{n,1}^{\prime} + I_{n,1}^{\prime \prime}.
	\end{aligned}
	\]
	
	Using a similar argument, we observe that \( -(x-s)/h_n \rightarrow -\infty \) as \( n \rightarrow +\infty \). Thus:
	\[
	I_{n,1}^{\prime} \rightarrow 0 \quad \text{as } n \rightarrow +\infty. \label{e4}
	\]
	
	The main task is to calculate the asymptotic value of \( I_{n,1}^{\prime \prime} \). Applying the dominated convergence theorem and using the result from \eqref{Result1}, we obtain:
	\[
	\begin{aligned}
		\lim_{n \rightarrow +\infty} I_{n,1}^{\prime \prime} &= \int_{-\infty}^{x} \left( \int_{-\infty}^{+\infty} L(u) \, du \right) g(s) \, ds \\
		&= \left( \int_{-\infty}^{+\infty} L(u) \, du \right) G(x).
	\end{aligned}
	\]
	
	The desired result is established for \( l = 1 \). Next, for \( l = 2 \), we proceed as follows:

	\begin{eqnarray*}
		h_{n}^{2\beta }E\left[ M_{h_{n},1}^{2}\left( x\right) \right]
		&=&\int\limits_{-\infty }^{x}\left( \int\limits_{-\infty }^{\left(
			x-s\right) /h_{n}}{}h_{n}^{\beta }W_{h_{n}}\left( u\right) du\right)
		^{2}g\left( s\right) ds+\int\limits_{x}^{+\infty }\left(
		\int\limits_{-\infty }^{\left( x-s\right) /h_{n}}W_{h_{n}}\left( u\right)
		du\right) ^{2}g\left( s\right) ds \\
		&=&I_{1}+I_{2}.
	\end{eqnarray*}
	
	We follow similar arguments to see that $I_{1}\rightarrow G\left( x\right) 
	D_{1}^{2}$ and $I_{2}\rightarrow 0$ as $n\rightarrow +\infty .$
\end{proof}

The next lemma which due to \cite{Birkel1988} plays a crucial role to
evaluate the covariance when dealing with associated (PA) data.

\begin{lemma}[Birkel]
	\label{Berkel} Suppose that $\left\{ Y_{j}\right\} _{j\in I}$ is a finite
	collection of positively associated r.v.'s. Let $A$ and $B$ be subsets of 
	$I$ and $\Phi _{1}(.)$, $\Phi  _{2}(.)$ be functions on $ 
	%TCIMACRO{\U{211d} }%
	%BeginExpansion
	\mathbb{R}
	%EndExpansion
	^{\left\vert A\right\vert }$ and $ 
	%TCIMACRO{\U{211d} }%
	%BeginExpansion
	\mathbb{R}
	%EndExpansion
	^{\left\vert B\right\vert }$  respectively, with bounded first order partial
	derivatives, then we have  
	\begin{equation*}
		\left\vert \text{Cov}\left[ \Phi _{1}(Y_{i},i\in A),\Phi _{2}(Y_{j},j\in B) \right]
		\right\vert \leq \sum_{i\in A}\sum_{j\in B}\left\Vert \frac{ \partial \Phi
			_{1}}{\partial t_{i}}\right\Vert _{\infty }.\left\Vert \frac{ \partial \Phi
			_{2}}{\partial t_{j}}\right\Vert _{\infty }\text{Cov}(Y_{i},Y_{j}),
	\end{equation*}
	
	where $\left\Vert .\right\Vert _{\infty }$ stands for the sup-norm, i.e. $%
	\left\Vert \frac{\partial \Phi _{i}}{ \partial t_{l}}\right\Vert _{\infty
	}=\max \left\{ \left\Vert \frac{\partial ^{+}\Phi _{i}}{\partial t_{l}}%
	\right\Vert _{\infty },\left\Vert \frac{ \partial ^{-}\Phi _{i}}{\partial
		t_{l}}\right\Vert _{\infty }\right\} $.
\end{lemma}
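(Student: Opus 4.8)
The plan is to prove Lemma~\ref{Berkel} by reducing the multivariate covariance to the nonnegative bivariate tail-covariance kernels that positive association controls, through a Hoeffding--Newman type identity. Throughout write $U=(Y_i)_{i\in A}$ and $V=(Y_j)_{j\in B}$, and set
\[
H_{ij}(s,t):=P(Y_i>s,\,Y_j>t)-P(Y_i>s)\,P(Y_j>t),\qquad i\in A,\ j\in B.
\]
Since any pair of positively associated random variables is positively quadrant dependent, one has $H_{ij}(s,t)\ge 0$ for all $(s,t)$, and the linear specialization of the identity below yields $\iint_{\mathbb{R}^2}H_{ij}(s,t)\,ds\,dt=\text{Cov}(Y_i,Y_j)$, the very quantity appearing in the bound. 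These two facts are the whole source of positivity and of the final covariance weights.

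First I would establish the one-dimensional building block. For random variables $S,T$ and $C^1$ functions $f,g$ with bounded derivatives, the Hoeffding identity
\[
\text{Cov}\big(f(S),g(T)\big)=\int_{\mathbb{R}}\int_{\mathbb{R}}f'(s)\,g'(t)\,\big[P(S>s,T>t)-P(S>s)P(T>t)\big]\,ds\,dt
\]
follows by writing $f(S)-f(0)=\int_{0}^{\infty}f'(s)\mathbf{1}(S>s)\,ds-\int_{-\infty}^{0}f'(s)\mathbf{1}(S\le s)\,ds$ (and similarly for $g$), substituting into $\text{Cov}(f(S),g(T))=\mathbb{E}[(f(S)-f(0))(g(T)-g(0))]-\mathbb{E}[f(S)-f(0)]\,\mathbb{E}[g(T)-g(0)]$, and interchanging expectation with integration by Fubini. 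Bounding $|f'|,|g'|$ by their sup-norms and using $H\ge 0$ immediately gives the scalar inequality $|\text{Cov}(f(S),g(T))|\le \|f'\|_\infty\|g'\|_\infty\,\text{Cov}(S,T)$.

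Next I would lift this to vectors by a coordinate-by-coordinate telescoping of $\Phi_1$ and $\Phi_2$, producing the representation
\[
\text{Cov}\big(\Phi_1(U),\Phi_2(V)\big)=\sum_{i\in A}\sum_{j\in B}\int_{\mathbb{R}}\int_{\mathbb{R}}\mathbb{E}\!\left[\partial_i\Phi_1(U)\,\partial_j\Phi_2(V)\,\middle|\,Y_i>s,\,Y_j>t\right]\!H_{ij}(s,t)\,ds\,dt,
\]
or a variant in which each summand is dominated in absolute value by $\|\partial_i\Phi_1\|_\infty\,\|\partial_j\Phi_2\|_\infty\,H_{ij}(s,t)$. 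Because every $H_{ij}\ge 0$, applying the triangle inequality, pulling the sup-norms out of the integrals, and using $\iint H_{ij}=\text{Cov}(Y_i,Y_j)$ collapses the expression to
\[
\big|\text{Cov}(\Phi_1(U),\Phi_2(V))\big|\le \sum_{i\in A}\sum_{j\in B}\|\partial_i\Phi_1\|_\infty\,\|\partial_j\Phi_2\|_\infty\,\text{Cov}(Y_i,Y_j),
\]
which is the claim.

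The main obstacle is the rigorous justification of the multivariate identity and its domination. Unlike the scalar case, the partial derivatives $\partial_i\Phi_1(U)$ and $\partial_j\Phi_2(V)$ are evaluated at random arguments depending on all coordinates, so one cannot factor them through $H_{ij}$ by a naive Fubini; the clean route is to telescope one coordinate at a time, applying the scalar Hoeffding identity conditionally at each step and controlling the remainder, then invoking dominated convergence (legitimate because the gradients are bounded and each $\text{Cov}(Y_i,Y_j)$ is finite) to interchange the limits, derivatives, and integrals. A technically lighter alternative, if preferred, is to first prove the bound for coordinatewise nondecreasing $C^1$ functions---where association is used directly and every $H_{ij}$ enters with a fixed sign---and then remove monotonicity by writing a general bounded-derivative $\Phi$ as a difference of two such functions (e.g.\ by adding a large linear term $c\sum_k t_k$), noting that this leaves the sup-norms of the partials unchanged and that the linear part contributes exactly the covariance weights.
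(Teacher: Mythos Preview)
The paper does not prove this lemma; it is quoted from \cite{Birkel1988} as a known tool, so there is no in-paper argument to compare against. Your proposal therefore has to be judged on its own, and both of the routes you sketch have a real gap.

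In your main route, the displayed identity with the conditional expectation $\mathbb{E}[\partial_i\Phi_1(U)\,\partial_j\Phi_2(V)\mid Y_i>s,Y_j>t]$ multiplying $H_{ij}(s,t)$ is not a standard formula and is not correct as written: when you telescope coordinate by coordinate, the partial derivatives are evaluated at hybrid arguments mixing original and independent-copy coordinates, and the resulting integrand does not factor through the bivariate tail kernel $H_{ij}$ of the \emph{pair} $(Y_i,Y_j)$ alone. You acknowledge this as ``the main obstacle'' but do not actually close it; ``apply the scalar Hoeffding identity conditionally at each step'' is precisely the hard part, and no domination argument rescues a representation that is not valid in the first place.

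Your alternative route is the classical Newman--Birkel idea and is the right one, but the claim that writing $\Phi$ as a difference of two coordinatewise nondecreasing functions ``leaves the sup-norms of the partials unchanged'' is false: if you set $\Psi_1=\Phi+c\sum_k t_k$ and $\Psi_2=c\sum_k t_k$, then $\|\partial_i\Psi_1\|_\infty$ can be as large as $\|\partial_i\Phi\|_\infty+c$, which would inflate the bound. The correct decomposition uses coordinate-specific constants $c_i=\|\partial_i\Phi_1\|_\infty$, $d_j=\|\partial_j\Phi_2\|_\infty$ and sets $f_1=\tfrac12(\Phi_1+\sum_i c_i t_i)$, $f_2=\tfrac12(-\Phi_1+\sum_i c_i t_i)$ (similarly $g_1,g_2$). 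All four cross-covariances $\text{Cov}(f_a(U),g_b(V))$ are nonnegative by association, and the key step you are missing is that one does \emph{not} bound them individually: instead one uses that $f_1+f_2=\sum_i c_i t_i$ is \emph{linear}, so
\[
\sum_{a,b\in\{1,2\}}\text{Cov}(f_a(U),g_b(V))=\text{Cov}\!\Big(\sum_{i\in A}c_iY_i,\sum_{j\in B}d_jY_j\Big)=\sum_{i\in A}\sum_{j\in B}c_id_j\,\text{Cov}(Y_i,Y_j),
\]
and since $|\text{Cov}(\Phi_1,\Phi_2)|=|\text{Cov}(f_1-f_2,g_1-g_2)|$ is dominated by this nonnegative sum, the inequality follows. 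That is the missing idea.
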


\begin{lemma}
	\label{prop2} Under hypothesis (\textbf{H1}) and for $n$ large enough, in
	addition to:
	
	1) If (\textbf{H3})-2 holds, then we have 
	\[
	h_{n}^{\beta }\left\Vert W_{h_{n}}\right\Vert _{\infty }\leq C. 
	\]
	
	2) If (\textbf{H3})-1 holds, then $W_{h_{n}}$ is continuous
	and first-order derivative with 
	\[
	h_{n}^{\beta }\left\Vert W_{h_{n}}^{\prime }\right\Vert _{\infty }\leq C,
	\]
	
	3) If (\textbf{H3})-3 holds true, then we have 
	\[
	h_{n}^{\beta }\left\Vert M_{h_{n}}\right\Vert _{\infty }\leq C.
	\]
\end{lemma}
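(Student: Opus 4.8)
The plan is to treat all three estimates through the common Fourier representation
\[
h_n^\beta W_{h_n}(x)=\frac{h_n^\beta}{2\pi}\int_{-\infty}^{+\infty}e^{-itx}\,\frac{\phi_k(t)}{\phi_r(t/h_n)}\,dt,
\]
and to control the amplification factor $h_n^\beta/|\phi_r(t/h_n)|$ once and for all. The first step is to record a two-sided bound on the error characteristic function: since $\phi_r$ is continuous and, by \textbf{(H1)}-1, nowhere vanishing, while \textbf{(H1)}-2 together with the conjugate symmetry $\phi_r(-t)=\overline{\phi_r(t)}$ forces $|\phi_r(t)|\sim\beta_1|t|^{-\beta}$ as $|t|\to+\infty$, there is a constant $c>0$ with $|\phi_r(u)|\ge c(1+|u|)^{-\beta}$ for every $u\in\mathbb{R}$. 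Substituting $u=t/h_n$ gives, for $h_n\le 1$,
\[
\frac{h_n^\beta}{|\phi_r(t/h_n)|}\le\frac{(h_n+|t|)^\beta}{c}\le\frac{(1+|t|)^\beta}{c}\le\frac{C(1+|t|^\beta)}{c},
\]
which is the workhorse inequality for all three parts.

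For part 1) I would bound the exponential in modulus by $1$ under the integral and apply the workhorse inequality, reducing the claim to the finiteness of $\int(1+|t|^\beta)|\phi_k(t)|\,dt$; splitting at $|t|=1$ and using that $\phi_k$ is a bounded characteristic function near the origin and that $\int|t^\beta\phi_k(t)|\,dt<\infty$ by \textbf{(H3)}-2 at infinity gives the uniform-in-$x$ bound. For part 2) I would differentiate under the integral sign, which is legitimate because $\int|t|\,|\phi_k(t)|/|\phi_r(t/h_n)|\,dt<\infty$ (the extra factor $|t|$ is absorbed by \textbf{(H3)}-1, since $\int|t^{\beta+1}\phi_k(t)|\,dt<\infty$); continuity of $W_{h_n}$ and $W_{h_n}'$ then follows from dominated convergence, and the same modulus estimate applied to $W_{h_n}'(x)=\frac{1}{2\pi}\int(-it)e^{-itx}\phi_k(t)/\phi_r(t/h_n)\,dt$ yields $h_n^\beta\|W_{h_n}'\|_\infty\le C$.

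The genuinely delicate part is 3), because $M_{h_n}=\int_{-\infty}^{\cdot}W_{h_n}$ is not integrable ($M_{h_n}(+\infty)=1$) and the naive Fubini exchange diverges at $t=0$. My plan is to introduce the candidate primitive
\[
\Lambda_n(x):=\frac{1}{2\pi}\int_{-\infty}^{+\infty}\frac{\phi_k(t)}{\phi_r(t/h_n)}\,\frac{e^{-itx}}{-it}\,dt,
\]
whose integrand is genuinely in $L^1$: near $t=0$ the singularity $1/t$ is cancelled by \textbf{(H2)}-3, which gives $\phi_k(t)=O(t)$ so that $\phi_k(t)/t$ stays bounded (and $\phi_r(t/h_n)\to\phi_r(0)=1$), while at infinity the workhorse inequality turns the integrand into something dominated by $|t|^{\beta-1}|\phi_k(t)|$, integrable by \textbf{(H3)}-3. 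Differentiating under the integral gives $\Lambda_n'=W_{h_n}$, so $M_{h_n}-\Lambda_n$ is constant; pinning this constant is the crux, and here I would invoke Riemann--Lebesgue, which forces $\Lambda_n(x)\to 0$ as $x\to-\infty$, matching $M_{h_n}(-\infty)=0$ and fixing the constant to zero. Hence $M_{h_n}=\Lambda_n$, and the same $L^1$ estimate, now carrying the $h_n^\beta$ factor and the workhorse inequality, yields $h_n^\beta\|M_{h_n}\|_\infty\le\frac{1}{2\pi c}\int(1+|t|)^\beta|\phi_k(t)|/|t|\,dt\le C$ uniformly in $x$. The main obstacle throughout is precisely this constant-fixing and removable-singularity bookkeeping in part 3; parts 1) and 2) become routine once the workhorse inequality is in hand.
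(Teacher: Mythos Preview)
Your proof is correct and follows the same route as the paper: bound the Fourier integrand in absolute value, control the amplification factor $h_n^\beta/|\phi_r(t/h_n)|$ through (\textbf{H1}), and then invoke (\textbf{H3})-2, (\textbf{H3})-1 and (\textbf{H3})-3 for the three parts respectively. The paper obtains the bound by computing the limit $h_n^\beta\int|\phi_k(t)/\phi_r(t/h_n)|\,dt\to\beta_1^{-1}\int|t^\beta\phi_k(t)|\,dt$ directly from (\textbf{H1})-2, while your workhorse inequality $|\phi_r(u)|\ge c(1+|u|)^{-\beta}$ is simply the uniform-in-$n$ version of the same estimate; for part 3) the paper writes the identical $1/t$-Fourier representation of $M_{h_n}$ via a formal Fubini step and then appeals to (\textbf{H3})-3, which your primitive $\Lambda_n$ together with the Riemann--Lebesgue constant-fixing makes rigorous (and, like the paper, relies on (\textbf{H2})-3 to tame the integrand at $t=0$).
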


\begin{proof}[Proof of Lemma \protect\ref{prop2}]
	To establish the first assertion, we begin by noting that  
	
	\[
	\left\Vert W_{h_{n}}\right\Vert _{\infty } \leq \frac{1}{2\pi } \int\limits_{-\infty }^{+\infty } \left\vert \frac{\phi _{k}\left( t\right) }{\phi _{r}\left( t/h_{n}\right) } \right\vert dt.
	\]
	
	Applying assumption (\textbf{H1})-2, we obtain  
	
	\begin{eqnarray*}
		\lim_{n \to +\infty} h_{n}^{\beta } \int\limits_{-\infty }^{+\infty } \left\vert \frac{\phi _{k}\left( t\right) }{\phi _{r}\left( t/h_{n}\right) } \right\vert dt
		&=& \lim_{n \to +\infty} \int\limits_{-\infty }^{+\infty } \left\vert \frac{t^{\beta } \phi _{k}\left( t\right) }{\left( t/h_{n}\right) ^{\beta } \phi _{r}\left( t/h_{n}\right) } \right\vert dt \\ 
		&=& \frac{1}{\beta _{1}} \int\limits_{-\infty }^{+\infty } \left\vert t^{\beta} \phi _{k}\left( t\right) \right\vert dt.
	\end{eqnarray*}
	
	Thus, the desired result follows from assumption (\textbf{H3})-2.
	
	Next, we analyze the derivative of $ W_{h_n}(x) $, given by  
	
	\[
	W_{h_{n}}^{\prime }(x) = \frac{1}{2\pi } \int_{-\infty }^{+\infty } -i e^{-i tx} \frac{t \phi _{k}(t)}{\phi _{r}(t/h_{n})} dt.
	\]
	
	This leads to the bound  
	
	\[
	\left\Vert W_{h_{n}}^{\prime } \right\Vert _{\infty } \leq \frac{1}{2\pi } \int\limits_{-\infty }^{+\infty } \left\vert \frac{t \phi _{k}\left( t\right) }{\phi _{r}\left( t/h_{n}\right) } \right\vert dt.
	\]
	
	Applying the same reasoning and assumption (\textbf{H3})-1, we obtain the required result.
	
	Finally, using Fubini's theorem, we express $ M_{h_n}(-x) $ as  
	
	\[
	M_{h_{n}}(-x) = \frac{1}{2\pi } \int_{-\infty }^{+\infty } \frac{-e^{-i tx}}{i} \frac{\phi _{k}(t)}{t \phi _{r}(t/h_{n})} dt.
	\]
	
	From this, we derive the bound  
	
	\[
	\left\Vert M_{h_{n}} \right\Vert _{\infty } \leq \frac{1}{2\pi } \int\limits_{-\infty }^{+\infty } \left\vert \frac{\phi _{k}\left( t\right) }{t \phi _{r}\left( t/h_{n}\right) } \right\vert dt.
	\]
	
	Following the same steps as in the first assertion and applying assumption (\textbf{H3})-3, the result follows.

\end{proof}

\begin{proof}[Proof of Theorem \protect\ref{variance}]
	First, let us consider 
	\begin{equation}
		\tilde{M}_{h_{n},i}\left( x\right) =M_{h_{n},i}\left( x\right) -E\left[
		M_{h_{n},i}\left( x\right) \right]  \label{tild1}
	\end{equation}
	
	By simple calculation and using stationarity we can see that 
	\begin{eqnarray*}
		\text{Var}\left( 1-F_{n}\left( x\right) \right) &=&\text{Var}\left( F_{n}\left( x\right)
		\right) \\
		&=&\frac{1}{n}\text{Var}\left( \tilde{M}_{h_{n},1}\left( x\right) \right) +\frac{2}{
			n}\sum\limits_{i=2}^{n}\left( 1-\frac{i}{n}\right) \text{Cov}\left( \tilde{M}
		_{h_{n},1}\left( x\right) ,\tilde{M}_{h_{n},i}\left( x\right) \right) .
	\end{eqnarray*}
	
	Now, we are in position to prove that
	
	\begin{equation}  \label{item1}
		\lim_{n\rightarrow +\infty }h_{n}^{2\beta }\text{Var}\left( \tilde{M}
		_{h_{n},1}\left( x\right) \right) =G\left( x\right) \left[ 1-G\left(
		x\right) \right] D_{1}^2,
	\end{equation}
	and 
	\begin{equation}  \label{item2}
		h_{n}^{2\beta }\sum\limits_{i=2}^{n}\left\vert \text{Cov}\left( \tilde{M}
		_{h_{n},1}\left( x\right) ,\tilde{M}_{h_{n},i}\left( x\right) \right)
		\right\vert =o\left( 1\right) .
	\end{equation}
	
	Concerning point (\ref{item1}), we write 
	\[
	\text{Var}\left( \tilde{M}_{h_{n},1}\left( x\right) \right) =E\left[ \left(
	M_{h_{n},1}\left( x\right) \right) ^{2}\right] -\left( E\left[
	M_{h_{n},1}\left( x\right) \right] \right) ^{2}.
	\]
	
	Consequently 
	\[
	h_{n}^{2\beta }\text{Var}\left( \tilde{M}_{h_{n},1}\left( x\right) \right)
	=h_{n}^{2\beta }E\left[ \left( M_{h_{n},1}\left( x\right) \right) ^{2}\right]
	-\left( h_{n}^{\beta }E\left[ M_{h_{n},1}\left( x\right) \right] \right)
	^{2}.
	\]
	
	The target result followed directly using Lemma \ref{ident}.
	
	As to assertion (\ref{item2}), we deal as follows
	\[
	\sum\limits_{i=2}^{n}\left\vert \text{Cov}\left( \tilde{M}_{h_{n},1}\left( x\right)
	,\tilde{M}_{h_{n},i}\left( x\right) \right) \right\vert
	=\sum\limits_{i=2}^{\rho _{n}}\left\vert \text{Cov}\left( \tilde{M}_{h_{n},1}\left(
	x\right) ,\tilde{M}_{h_{n},i}\left( x\right) \right) \right\vert
	+\sum\limits_{i=\rho _{n}+1}^{n}\left\vert \text{Cov}\left( \tilde{M}
	_{h_{n},1}\left( x\right) ,\tilde{M}_{h_{n},i}\left( x\right) \right)
	\right\vert , 
	\]
	
	for some positive sequence $\left\{ \rho _{n}\right\} $ satisfies $\rho
	_{n}\rightarrow +\infty $ and $h_{n}^{2}\rho _{n}\rightarrow 0$ as $%
	n\rightarrow +\infty .$
	
	For of the first contribution, we have 
	\[
	\left\vert \text{Cov}\left( \tilde{M}_{h_{n},1}\left( x\right) ,\tilde{M}
	_{h_{n},i}\left( x\right) \right) \right\vert =\int_{-\infty }^{+\infty
	}\int_{-\infty }^{+\infty }M_{h_{n}}\left( \frac{x-u}{h_{n}}\right)
	M_{h_{n}}\left( \frac{x-v}{h_{n}}\right) \left[ g_{Y_{1},Y_{i}}\left(
	u,v\right) -g\left( u\right) g\left( v\right) \right] dudv. 
	\]
	
	From the assumption that $\left| g_{Y_{1},Y_{i}}\left( u,v\right) - g\left( u\right) g\left( v\right) \right| \leq M < +\infty$, we can have: 
	\[
	\left\vert \text{Cov}\left( \tilde{M}_{h_{n},1}\left( x\right) ,\tilde{M}%
	_{h_{n},i}\left( x\right) \right) \right\vert \leq Ch_{n}^{2}\left\Vert
	M_{h_{n}}\right\Vert _{\infty }^{2}. 
	\]
	
	From Lemma \ref{prop2}, we find 
	\[
	\left\vert \text{Cov}\left( \tilde{M}_{h_{n},1}\left( x\right) ,\tilde{M}%
	_{h_{n},i}\left( x\right) \right) \right\vert \leq \frac{C}{h_{n}^{2\beta -2}%
	}.
	\]
	
	This means that 
	\[
	h_{n}^{2\beta}\sum\limits_{i=2}^{\rho _{n}}\left\vert \text{Cov}\left( \tilde{M}_{h_{n},1}\left(
	x\right) ,\tilde{M}_{h_{n},i}\left( x\right) \right) \right\vert =O\left( 
	\rho _{n}h_n^2\right) . 
	\]
	
	Using the fact that $h_{n}^{2}\rho _{n}\rightarrow 0$ as $n\rightarrow
	+\infty $, gives 
	\[
	h_{n}^{2\beta}\sum\limits_{i=2}^{\rho _{n}}\left\vert \text{Cov}\left( \tilde{M}_{h_{n},1}\left(
	x\right) ,\tilde{M}_{h_{n},i}\left( x\right) \right) \right\vert =o\left(1\right) . 
	\]
	
	Concerning the second contribution (for $\rho _{n}+1\leq i\leq n$), we make
	use of Lemma \ref{Berkel}. Indeed, it is mentioned in Section 1 that $%
	\left\{ Y_{i}\right\} _{i=1}^{n}$ is positively associated. Moreover, $%
	\text{Cov}\left( Y_{j},Y_{j^{\prime }}\right) =\text{Cov}\left( X_{j},X_{j^{\prime
	}}\right) $ uniformly in $%
	%TCIMACRO{\U{2115} }%
	%BeginExpansion
	\mathbb{N}
	%EndExpansion
	$, since $\left\{ X_{i}\right\} _{i=1}^{n}$ and $\left\{ e_{i}\right\}
	_{i=1}^{n}$ are independent and the $e_{i}$'s are independent among
	themselves. So, in order to apply Lemma \ref{Berkel}, we shall calculate the
	first partial derivatives of transformations $\Phi _{j}\left( .\right) $
	which are $M_{h_{n}}\left( \frac{.}{h_{n}}\right) $ in our case and the sets 
	$A$ and $B$ present $1$ and $i$ respectively.
	
	Using the fact that  
	\[
	\frac{\partial}{\partial x} M_{h_{n}} \left( \frac{x}{h_{n}} \right) = \frac{1}{h_{n}} W_{h_{n}} \left( \frac{x}{h_{n}} \right),
	\]  
	and applying Lemma \ref{prop2}, we conclude that  
	\begin{equation}  
		\left\| \partial M_{h_{n}} \left( \frac{.}{h_{n}} \right) \right\|_{\infty} \leq \frac{C}{h_{n}^{\beta +1}}.  
		\label{H}  
	\end{equation} 
	
	Then, from the fact that $\text{Cov}\left( Y_{1},Y_{i}\right) =\text{Cov}\left(
	X_{1},X_{i}\right) $, Lemma \ref{Berkel} gives 
	\begin{eqnarray*}
		\left\vert \text{Cov}\left( \tilde{M}_{h_{n},1}\left( x\right) ,\tilde{M}%
		_{h_{n},i}\left( x\right) \right) \right\vert &=&\left\vert \text{Cov}\left(
		M_{h_{n}}\left( \frac{x-Y_{1}}{h_{n}}\right) ,M_{h_{n}}\left( \frac{x-Y_{i}}{%
			h_{n}}\right) \right) \right\vert \\
		&\leq &\left( \frac{C}{h_{n}^{\beta +1}}\right) ^{2}\text{Cov}\left(
		X_{1},X_{i}\right) .
	\end{eqnarray*}
	
	Consequently 
	\[
	h_{n}^{2\beta }\sum\limits_{i=\rho _{n}+1}^{n}\left\vert \text{Cov}\left( \tilde{M}%
	_{h_{n},1}\left( x\right) ,\tilde{M}_{h_{n},i}\left( x\right) \right)
	\right\vert \leq \frac{C}{h_{n}^{2}}\sum\limits_{i=\rho _{n}+1}^{n}\text{Cov}\left(
	X_{1},X_{i}\right) . 
	\]
	
	Using the fact that $\rho _{n}\leq i$, we can see $1\leq \left( \frac{i}{
		\rho _{n}}\right) ^{\mu }$. Thus 
	\[
	h_{n}^{2\beta }\sum\limits_{i=\rho _{n}+1}^{n}\left\vert \text{Cov}\left( \tilde{M}%
	_{h_{n},1}\left( x\right) ,\tilde{M}_{h_{n},i}\left( x\right) \right)
	\right\vert \leq \frac{Ch_{n}}{\rho _{n}^{\mu }h_{n}^{2}}\sum\limits_{i=\rho
		_{n}+1}^{+\infty }i^{\mu }\text{Cov}\left( X_{1},X_{i}\right) . 
	\]
	
	Now, we choose \( \rho _{n} = h_{n}^{-2/\mu} \). (We still have \( h_{n}^{2} \rho _{n} \to 0 \) as \( n \to +\infty \) since $\mu >1$.) Then, Condition (\textbf{H5})-2 simultaneously completes the proof of (\ref{item2}) and point (\ref{var1}) in Theorem \ref{variance}. Next, we proceed to establish item (\ref{cov1}). To this end, let us consider
	\begin{equation}
		\tilde{W}_{h_{n},i}\left( x\right) =\frac{1}{h_{n}}W_{h_{n}}\left( \frac{
			x-Y_{i}}{h_{n}}\right) -E\left[ \frac{1}{h_{n}}W_{h_{n}}\left( \frac{x-Y_{i} 
		}{h_{n}}\right) \right] .  \label{tild2}
	\end{equation}
	
	By stationarity, we can write 
	\begin{eqnarray}
		-\text{Cov}\left( f_{n}(x),1-F_{n}(x)\right) &=&\text{Cov}\left( f_{n}(x),F_{n}(x)\right) 
		\nonumber \\
		&=&\frac{1}{n}\text{Cov}\left( \tilde{M}_{h_{n},1}\left( x\right) ,\tilde{W}
		_{h_{n},1}\left( x\right) \right)  \nonumber \\
		&&+\frac{2}{n}\sum\limits_{i=2}^{n}\left( 1-\frac{i}{n}\right) \text{Cov}\left( 
		\tilde{M}_{h_{n},1}\left( x\right) ,\tilde{W}_{h_{n},i}\left( x\right)
		\right) .  \label{dec}
	\end{eqnarray}
	The goal now becomes
	
	\begin{equation}
		\lim_{n\rightarrow +\infty }h_{n}^{2\beta }\text{Cov}\left( \tilde{M}
		_{h_{n},1}\left( x\right) ,\tilde{W}_{h_{n},1}\left( x\right) \right)
		=G\left( x\right) D_{2},  \label{assur1}
	\end{equation}
	\begin{equation}
		h_{n}^{2\beta }\sum\limits_{i=2}^{n}\left\vert \text{Cov}\left( \tilde{M}
		_{h_{n},1}\left( x\right) ,\tilde{W}_{h_{n},i}\left( x\right) \right)
		\right\vert =o\left( 1\right) .  \label{assur2}
	\end{equation}
	Using the results in Proposition \ref{P1} together with Fubini's theorem, we
	have
	
	\begin{eqnarray*}
		\text{Cov}\left( \tilde{M}_{h_{n},1}\left( x\right) ,\tilde{W}_{h_{n},1}\left(
		x\right) \right) &=&\frac{1}{h_{n}}\int\limits_{-\infty }^{+\infty
		}W_{h_{n}}\left( \frac{x-u}{h_{n}}\right) \left[ \int\limits_{-\infty
		}^{x}W_{h_{n}}\left( \frac{s-u}{h_{n}}\right) ds\right] g(u)du+O(1) \\
		&=&\frac{1}{h_{n}}\int\limits_{-\infty }^{x}\left[ \int\limits_{-\infty
		}^{+\infty }W_{h_{n}}\left( \frac{x-u}{h_{n}}\right) W_{h_{n}}\left( \frac{
			s-u}{h_{n}}\right) g(u)du\right] ds+O(1).
	\end{eqnarray*}
	
	From Remark \ref{pdf}, we can see that: 
	\begin{eqnarray*}
		W_{h_{n}}\left( \frac{x-u}{h_{n}}\right) &=&W_{h_{n}}\left( \frac{s-u}{h_{n} 
		}+\frac{x-s}{h_{n}}\right) \\
		&=&W_{h_{n}}\left( \frac{s-u}{h_{n}}\right) +O(1).
	\end{eqnarray*}
	
	Therefore, since $g(.)$ is a density we have 
	\[
	\text{Cov}\left( \tilde{M}_{h_{n},1}\left( x\right) ,\tilde{W}_{h_{n},1}\left(
	x\right) \right) =\frac{1}{h_{n}}\int\limits_{-\infty }^{x}\left[
	\int\limits_{-\infty }^{+\infty }W_{h_{n}}^{2}\left( \frac{s-u}{h_{n}}%
	\right) g(u)du\right] ds+O(1). 
	\]
	
	Integration after a convergence using Lemma 1-b) of \cite{Masry2003}, lead
	to 
	\begin{eqnarray}
		\lim_{n\rightarrow +\infty }h_{n}^{2\beta }\text{Cov}\left( \tilde{M}
		_{h_{n},1}\left( x\right) ,\tilde{W}_{h_{n},1}\left( x\right) \right)
		&=&\lim_{n\rightarrow +\infty }\int\limits_{-\infty }^{x}\left[
		\int\limits_{-\infty }^{+\infty }\frac{1}{h_{n}}\kappa _{h_{n}}\left( \frac{
			s-u}{h_{n}}\right) g(u)du\right] ds+O(h_{n}^{2\beta })  \nonumber
		\label{cov} \\
		&=&G(x)D_{2}.  \nonumber
	\end{eqnarray}
	This ends the proof of (\ref{assur1}).
	
	One can reach point (\ref{assur2}) following similar arguments which used
	for (\ref{item2}). Then, we omit the details.
	
\end{proof}

\begin{proof}[Proof of Corollary \protect\ref{corellary1}]
	From Theorem \ref{variance}, we have  
	\begin{equation*}  
		nh_{n}^{2\beta } \text{Cov}(f_{n}(x),1-F_{n}(x)) = -D_{2}G(x)(1+o(1)),  
	\end{equation*}  
	
	\begin{equation*}  
		nh_{n}^{2\beta } \text{Var}(1-F_{n}(x)) = D_{1}G(x)\left[1-G(x)\right](1+o(1)).  
	\end{equation*}  
	
	Furthermore, Theorem 1 in \cite{Masry2003} gives  
	\begin{equation*}  
		nh_{n}^{2\beta } \text{Var}(f_{n}(x)) = D_{2}g(x)(1+o(1)).  
	\end{equation*}  
	
	Next, we consider the mapping  
	\begin{equation*}  
		\tau (x,y) = \frac{x}{y}, \quad \forall (x,y) \in \mathbb{R} \times \mathbb{R}^{\ast}.  
	\end{equation*}  
	
	Define  
	\begin{equation*}  
		\theta(x) := \left(f(x), 1-F(x)\right), \quad \theta_n(x) := \left(f_n(x), 1-F_n(x)\right).  
	\end{equation*}  
	
	Thus, we observe that \(\tau(\theta(x))\) and \(\tau(\theta_n(x))\) correspond to \(\lambda(x)\) and \(\lambda_n(x)\), respectively.  
	
	Now, we compute the variance of \(\theta_n(x)\):  
	\begin{equation}  
		\text{Var}(\theta_n(x)) := \Sigma_{\theta_n(x)} = \frac{1}{nh_n^{2\beta}}  
		\begin{pmatrix}  
			D_{2}g(x) & -D_{2}G(x) \\  
			-D_{2}G(x) & D_{1}G(x)[1-G(x)]  
		\end{pmatrix} (1+o(1)).  
		\label{var}  
	\end{equation}  
	
	By applying the \(\delta\)-method theorem due to Doob \cite{Doob1935}, we obtain  
	\begin{equation*}  
		\text{Var}(\lambda_n(x)) = \text{Var}(\tau(\theta_n(x))) \to \bigtriangledown \tau (\theta(x))^T \Sigma_{\theta(x)} \bigtriangledown \tau (\theta(x)),  
	\end{equation*}  
	where  
	\begin{equation*}  
		\Sigma_{\theta}(x) = \lim_{n\to \infty} \Sigma_{\theta_n}(x)  
	\end{equation*}  
	and the gradient vector \(\bigtriangledown \tau (\theta(x))\) is given by  
	\begin{equation}  
		\bigtriangledown \tau (\theta(x)) :=  
		\begin{pmatrix}  
			\frac{\partial \lambda(x)}{\partial f(x)} \\  
			\frac{\partial \lambda(x)}{\partial (1-F(x))}  
		\end{pmatrix}  
		=  
		\begin{pmatrix}  
			\frac{1}{1-F(x)} \\  
			\frac{f(x)}{(1-F(x))^2}  
		\end{pmatrix}.  
	\end{equation}  
	
	Thus, we obtain the asymptotic variance:  
	\begin{equation}  
		\lim_{n\to \infty} nh_n^{2\beta} \text{Var}(\lambda_n(x)) =  
		\frac{D_{2}g(x)}{(1-F(x))^2}  
		- 2 \frac{D_{2}f(x) G(x)}{(1-F(x))^3}  
		+ \frac{D_{1}[f(x)]^2 G(x)[1-G(x)]}{(1-F(x))^4}.  
		\label{asvar}  
	\end{equation}  
	
\end{proof}

\begin{proof}[Proof of Proposition \protect\ref{bais1}]
	Notice that: 
	\begin{equation}
		\frac{1}{1-F_{n}(x)} = \frac{1}{1-E[F_{n}(x)]} - \frac{E[F_{n}(x)] - F_{n}(x)}{[E(1-F_{n}(x))]^{2}} + \frac{(E[F_{n}(x)] - F_{n}(x))^{2}}{[1-F_{n}(x)] [E(1-F_{n}(x))]^{2}}.  
		\label{decomp}
	\end{equation}

	Taking expectation after multiplying by $f_{n}(x)$, we obtain  
	\begin{equation}
		E(\lambda _{n}(x)) = \tilde{E}(\lambda _{n}(x)) + \frac{s_{n}(x)+\text{Cov} ( f_n(x), F_{n}(x) )}{[E(1-F_{n}(x))]^{2}},
	\end{equation}
	where  
	\begin{equation*}
		s_{n}(x) =  E\Bigg[ \lambda_{n}(x) \Big( F_{n}(x) - E[F_{n}(x)] \Big)\Big( F_{n}(x) - E[F_{n}(x)] \Big) \Bigg].
	\end{equation*}
	
	From Theorem \ref{variance}, we obtain: $\text{Cov} ( f_n(x), F_{n}(x) )=O\left( \frac{1}{nh_{n}^{2\beta}} \right).$ 
	
	As for \( s_n(x) \), we can observe that:	
	\begin{align*}
		s_{n}(x) &= \text{Cov} \Bigg( \lambda_{n}(x) \Big( F_{n}(x) - E[F_{n}(x)] \Big), F_{n}(x)-E[F_{n}(x) \Bigg) \\
		&= \text{Cov} \Big( \lambda_{n}(x) F_{n}(x), F_{n}(x) \Big) - E[F_{n}(x)] \text{Cov} \Big( \lambda_{n}(x), F_{n}(x) \Big) \\
		&= -\text{Cov} \Bigg( \lambda_{n}(x) \Big(1 - F_{n}(x)\Big)-\lambda_{n}(x), F_{n}(x) \Bigg) - E[F_{n}(x)] \text{Cov} \Big( \lambda_{n}(x), F_{n}(x) \Big) \\
		&= -\text{Cov} \Big( f_n(x), F_{n}(x) \Big) + \text{Cov} \Big( \lambda_{n}(x), F_{n}(x) \Big) \Big(1 - E[F_{n}(x)]\Big)
	\end{align*}
	The first term is done. For the second term, applying Theorem \ref{variance}, Corollary \ref{corellary1}, and the Cauchy-Schwarz inequality, we get:
	\begin{align*}
		\text{Cov} \Big( \lambda_{n}(x), F_{n}(x) \Big) &\leq \sqrt{\text{Var}(\lambda_{n}(x))} \cdot \sqrt{\text{Var}(F_{n}(x))} \\
		&= O\left( \frac{1}{n h_{n}^{2\beta}} \right).
	\end{align*}
The desired result follows from Proposition \ref{P1}, which states that $E[1 - F_{n}(x)] \to 1 - F(x) > 0 \quad \text{as} \quad n \to \infty.$

\end{proof}

\begin{proof}[Proof of Proposition \ref{bais2}]
	Using simple algebraic manipulations, we derive:
	\[
	\tilde{E}\left( \lambda_{n}(x) \right) - \lambda(x) = \frac{E\left( f_{n}(x) \right) - \lambda(x) \left[ 1 - E\left( F_{n}(x) \right) \right]}{1 - E\left( F_{n}(x) \right)}.
	\]
	This can be rewritten as:
	\[
	\tilde{E}\left( \lambda_{n}(x) \right) - \lambda(x) = \frac{E\left[ f_{n}(x) - f(x) \right] + \lambda(x) E\left[ F_{n}(x) - F(x) \right]}{1 - E\left( F_{n}(x) \right)}.
	\]
	
	For the denominator, applying the convolution theorem yields:
	\[
	1 - E\left( F_{n}(x) \right) = 1 - \int_{-\infty}^{+\infty} K(t) F(x - t h_{n}) \, dt.
	\]
	Under condition (\textbf{H2}) and using a first-order Taylor expansion, we obtain:
	\[
	1 - E\left( F_{n}(x) \right) = 1 - F(x) + o(1).
	\]
	
	For the numerator, a second-order Taylor expansion gives:
	\[
	\tilde{E}\left( \lambda_{n}(x) \right) - \lambda(x) = \frac{\frac{h_{n}^{2}}{2} \left[ f^{\prime \prime}(x) + \lambda(x) F^{\prime \prime}(x) \right] \int_{-\infty}^{+\infty} t^{2} k(t) \, dt}{1 - F(x)} + o(1).
	\]
	Simplifying further, we have:
	\[
	\tilde{E}\left( \lambda_{n}(x) \right) - \lambda(x) = \frac{h_{n}^{2}}{2} \left[ \lambda^{\prime \prime}(x) - \frac{2 \lambda^{\prime}(x) F^{\prime}(x)}{1 - F(x)} \right] \int_{-\infty}^{+\infty} t^{2} k(t) \, dt + o(1).
	\]
	This completes the proof.
\end{proof}

\begin{proof}[Proof of Theorem \protect\ref{an}]
	The goal is to establish the following convergence in distribution:
	\[
	\left( nh_{n}^{2\beta} \right)^{1/2} \left[ \begin{pmatrix} f_{n}(x) \\ 1 - F_{n}(x) \end{pmatrix} - \begin{pmatrix} E\left[ f_{n}(x) \right] \\ E\left[ 1 - F_{n}(x) \right] \end{pmatrix} \right] \overset{\mathcal{D}}{\rightarrow} \mathcal{N}_{2} \left( \mathbf{0}, \Sigma(x) \right),
	\]
	where \(\mathcal{N}_{2} \left( \mathbf{0}, \Sigma(x) \right)\) denotes a bivariate normal distribution with mean vector \(\mathbf{0} = (0, 0)^T\) and asymptotic covariance matrix:
	\[
	\Sigma(x) = \begin{pmatrix}
		D_{2}g(x) & -D_{2}G(x) \\
		-D_{2}G(x) & D_{1}G(x) \left[ 1 - G(x) \right]
	\end{pmatrix}.
	\]
	
	Let \(\alpha = (\alpha_1, \alpha_2)^T \in \mathbb{R}^2\) such that \(\alpha_1^2 + \alpha_2^2 \neq 0\). The goal reduces to showing:
	\[
	\sqrt{nh_{n}^{2\beta}} \left[ \alpha_1 \tilde{f}_n(x) - \alpha_2 \tilde{F}_n(x) \right] \overset{\mathcal{D}}{\rightarrow} \mathcal{N} \left( 0, \sigma^2(x) \right),
	\]
	where \(\tilde{f}_n(x) = f_n(x) - E\left[ f_n(x) \right]\), \(\tilde{F}_n(x) = F_n(x) - E\left[ F_n(x) \right]\), and:
	\[
	\sigma^2(x) = \alpha^T \Sigma(x) \alpha. \label{sigma}
	\]
	
	To proceed, define:
	\[
	\Lambda_n(x) = \sqrt{nh_{n}^{2\beta}} \left( \alpha_1 \tilde{f}_n(x) - \alpha_2 \tilde{F}_n(x) \right).
	\]
	
	From the definitions of \(\tilde{M}_{h_n,i}(x)\) and \(\tilde{W}_{h_n,i}(x)\) in \(\eqref{tild1}\) and \(\eqref{tild2}\), respectively, we have:
	\[
	\tilde{F}_n(x) = \frac{1}{nh_n} \sum_{i=1}^n h_n \tilde{M}_{h_n,i}(x),
	\]
	\[
	\tilde{f}_n(x) = \frac{1}{nh_n} \sum_{i=1}^n \tilde{W}_{h_n,i}(x).
	\]
	
	For notational convenience, let:
	\[
	T_{h_n,i}(x) = \alpha_1 \tilde{W}_{h_n,i}(x) - \alpha_2 h_n \tilde{M}_{h_n,i}(x). \label{T}
	\]
	It follows that:
	\[
	\Lambda_n(x) = \frac{1}{\sqrt{n}} \sum_{i=1}^n h_n^{\beta - 1} T_{h_n,i}(x) = \frac{1}{\sqrt{n}} \sum_{i=1}^n U_i.
	\]
	
	Thus, we aim to show:
	\[
	\Lambda_n(x) \overset{\mathcal{D}}{\rightarrow} \mathcal{N} \left( 0, \alpha^T \Sigma(x) \alpha \right).
	\]
	
	First, from Theorem \ref{variance}, we observe:
	\[
	\lim_{n \to \infty} \text{Var} \left( \Lambda_n(x) \right) = \alpha_1^2 \lim_{n \to \infty} \text{Var} \left( \sqrt{nh_n^{2\beta}} f_n(x) \right) 
	+ \alpha_2^2 \lim_{n \to \infty} \text{Var} \left( \sqrt{nh_n^{2\beta}} (1 - F_n(x)) \right) 
	\]
	\[
	+ 2\alpha_1 \alpha_2 \lim_{n \to \infty} \text{Cov} \left( \sqrt{nh_n^{2\beta}} f_n(x), \sqrt{nh_n^{2\beta}} (1 - F_n(x)) \right) = \alpha^T \Sigma(x) \alpha.
	\]

	To establish asymptotic normality for dependent random variables, we employ the big-block and small-block technique. Using the notation in Condition (\textbf{H6}), the set \(\{1, 2, \dots, n\}\) is partitioned into \(2\tau_n + 1\) subsets, consisting of large blocks of size \(p_n = p\) and small blocks of size \(q_n = q\), respectively, where \(\tau_n = \left\lfloor \frac{n}{p_n + q_n} \right\rfloor\). Specifically, define:
	\[
	T_s = \left\{ i : i = (s-1)(p_n + q_n) + 1, \dots, (s-1)(p_n + q_n) + p_n \right\}, \label{large}
	\]
	\[
	I_s = \left\{ j : j = (s-1)(p_n + q_n) + p_n + 1, \dots, s(p_n + q_n) \right\}. \label{small}
	\]
	
	For \(1 \leq s \leq \tau_n\), define the random variables:
	\[
	\eta_s = \sum_{i=(s-1)(p_n + q_n) + 1}^{(s-1)(p_n + q_n) + p_n} U_i \quad \text{and} \quad \zeta_s = \sum_{j=(s-1)(p_n + q_n) + p_n + 1}^{s(p_n + q_n)} U_i. \label{variables}
	\]
	
	The remaining block is:
	\[
	\vartheta_{\tau_n} = \sum_{i=\tau_n(p_n + q_n) + 1}^n U_i.
	\]
	
	Thus, we can write:
	\[
	\frac{1}{\sqrt{n}} \sum_{i=1}^n U_i = \frac{1}{\sqrt{n}} \left[ \sum_{s=1}^{\tau_n} \eta_s + \sum_{s=1}^{\tau_n} \zeta_s + \vartheta_{\tau_n} \right] = \frac{1}{\sqrt{n}} \left[ I_{n,1} + I_{n,2} + I_{n,3} \right].
	\]
	
	The goal now is to show:
	\[
	\frac{1}{n} E\left[ I_{n,2}^2 \right] \to 0, \quad \frac{1}{n} E\left[ I_{n,3}^2 \right] \to 0, \label{negligible}
	\]
	\[
	\frac{1}{\sqrt{n}} I_{n,1} \overset{\mathcal{L}}{\rightarrow} \mathcal{N} \left( 0, \sigma^2(x) \right). \label{normality}
	\]
	
	The limits in \(\eqref{negligible}\) indicate that \(I_{n,2}\) and \(I_{n,3}\) are asymptotically negligible, while \(\eqref{normality}\) establishes the asymptotic normality of the leading term \(I_{n,1}\). To prove \(\eqref{negligible}\), we rely on Lemma \ref{lemma1}.
	
	\begin{lemma}
		\label{lemma1} Under Conditions  (\textbf{H1}), (\textbf{H3})--(\textbf{H4}), (\textbf{H5})-2, (\textbf{H6}) , and for sufficiently large \( n \), we have:

		i) $\frac{\tau _{n}}{n}\text{Var}\left( \zeta _{1}\right) \rightarrow 0,$
		
		ii)$\frac{1}{n}\sum\limits_{1\leq i<j\leq \tau _{n}}\text{Cov}\left( \zeta
		_{i},\zeta _{j}\right) \rightarrow 0,$
		
		iii) $\text{Var}\left( \frac{1}{\sqrt{n}}I_{n,2}\right) \rightarrow 0,$
		
		iv) $\text{Var}\left( \frac{1}{\sqrt{n}}I_{n,3}\right) \rightarrow 0.$
	\end{lemma}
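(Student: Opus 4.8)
The four assertions are the standard negligibility estimates underlying the big-block/small-block decomposition, and my plan is to reduce all of them to two ingredients that are essentially already in hand: a bound on the variance of a single summand $U_1$ and a bound on the lag-covariances $\text{Cov}(U_1,U_{1+k})$. Since $\{Y_i\}$ is strictly stationary and each $U_i$ is a fixed measurable transformation of $Y_i$, the array $\{U_i\}$ is itself stationary; hence $\text{Var}(\zeta_s)=\text{Var}(\zeta_1)$ for every $s$, and it suffices to control one representative block. From the computation of $\text{Var}(\Lambda_n)$ carried out just above the lemma (and from Theorem \ref{variance}), the single-term variance $\text{Var}(U_1)$ stays bounded, being the diagonal part of the convergent quantity $\text{Var}(\Lambda_n)$. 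For the off-diagonal part I would apply Birkel's inequality (Lemma \ref{Berkel}) with $\Phi_j$ taken to be $M_{h_n}(\,\cdot/h_n)$ and $W_{h_n}(\,\cdot/h_n)$, using the sup-norm bounds of Lemma \ref{prop2} and the derivative bound \eqref{H}, together with $\text{Cov}(Y_k,Y_l)=\text{Cov}(X_k,X_l)$, to obtain a pointwise estimate of the form $|\text{Cov}(U_k,U_l)|\le C h_n^{-2(\beta+1)}\text{Cov}(X_k,X_l)$, whose summability over lags is supplied by (\textbf{H5})-2.

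For (i), writing $\zeta_1$ as a sum of $q_n$ consecutive terms and inserting the two ingredients gives
\[
\text{Var}(\zeta_1)\le q_n\,\text{Var}(U_1)+2q_n\sum_{k\ge 1}|\text{Cov}(U_1,U_{1+k})|=O(q_n).
\]
Therefore $\tfrac{\tau_n}{n}\text{Var}(\zeta_1)=O(\tau_n q_n/n)=O(q_n/(p_n+q_n))$, which tends to $0$ because (\textbf{H6}) forces $q_n=o(p_n)$ (indeed $q_n u_n=o((nh_n)^{1/2})$ while $p_n\sim (nh_n)^{1/2}/u_n$). For (iii) I then simply expand
\[
\text{Var}\Big(\tfrac{1}{\sqrt n}I_{n,2}\Big)=\frac{\tau_n}{n}\text{Var}(\zeta_1)+\frac{2}{n}\sum_{1\le i<j\le\tau_n}\text{Cov}(\zeta_i,\zeta_j),
\]
whose first term vanishes by (i) and whose second term vanishes by (ii); so (iii) is a direct corollary of (i) and (ii).

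The heart of the matter is (ii). The key structural fact is that two distinct small blocks $I_i$ and $I_j$ are always separated by at least one full large block, so every pair $k\in I_i$, $l\in I_j$ with $i\neq j$ satisfies $|k-l|\ge p_n$. Bounding each cross-covariance by Birkel's inequality, using this separation, and summing by stationarity, the double sum over $1\le i<j\le\tau_n$ collapses (after the $h_n^{2\beta}$-scaling carried by the $U_i$) to a constant multiple of
\[
\frac{1}{n\,h_n^{2(\beta+1)}}\sum_{i\ge p_n}\text{Cov}(X_1,X_i),
\]
which is precisely the tail quantity that (\textbf{H6}) is engineered to send to $0$. I expect this to be the main obstacle: one must verify that the prescribed big-block length $p_n=\lfloor (nh_n)^{1/2}/u_n\rfloor$ is large enough that the separated-lag covariance tail still beats the deconvolution blow-up factor $h_n^{-2(\beta+1)}$ arising from dividing by $\phi_r$, and carrying out the block-counting bookkeeping so that it lands exactly on the form controlled by (\textbf{H6}) (with the faster-than-summable decay of (\textbf{H5})-2 doing the rest) is the delicate step.

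Finally, (iv) is the easiest. The remainder block $I_{n,3}=\vartheta_{\tau_n}$ contains fewer than $p_n+q_n$ indices, so the same per-term variance and lag-covariance bounds give $\text{Var}(I_{n,3})=O(p_n+q_n)$ and hence $\text{Var}(\tfrac{1}{\sqrt n}I_{n,3})=O((p_n+q_n)/n)$. Since (\textbf{H6}) yields $p_n+q_n=o((nh_n)^{1/2})=o(n)$, this tends to $0$, which completes the proof of the lemma.
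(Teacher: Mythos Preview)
Your overall architecture (stationarity, per-block variance plus cross-block covariances, remainder of length $<p_n+q_n$) matches the paper's. Part (iii) as a corollary of (i)+(ii), and the size argument for (iv), are handled the same way. There is, however, a genuine gap in the ingredient you lean on throughout, namely the claim that Birkel's inequality together with (\textbf{H5})-2 yields $\sum_{k\ge1}|\text{Cov}(U_1,U_{1+k})|=O(1)$. Birkel applied to the $U_i$'s produces a bound of the form $|\text{Cov}(U_k,U_l)|\le C\,h_n^{-a}\,\text{Cov}(X_k,X_l)$ with a strictly positive power $a$ (coming from the derivative bounds in Lemma~\ref{prop2}); summing over $k$ and invoking only the finite constant $\sum_j\text{Cov}(X_1,X_j)<\infty$ from (\textbf{H5})-2 leaves you with $O(h_n^{-a})$, which blows up. Consequently your displayed estimate $\text{Var}(\zeta_1)=O(q_n)$ in (i), and the analogous bound $\text{Var}(I_{n,3})=O(p_n+q_n)$ in (iv), do not follow from the two ingredients you name. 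The paper avoids this by not using Birkel for all lags: it recycles the two-regime covariance analysis already carried out in the proof of Theorem~\ref{variance} (near lags controlled by the bounded-joint-density hypothesis $|g_{Y_1,Y_i}-g\otimes g|\le M$, far lags by Birkel with a truncation $\rho_n=h_n^{-2/\mu}$), which is exactly what yields $\sum_{l\ge2}|\text{Cov}(U_1,U_l)|=o(1)$ and hence $\mathcal{T}_{n,2}=o(q_n)$.

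For (ii) you take a genuinely different route from the paper. You exploit the $p_n$-separation between distinct small blocks and aim to land on the tail quantity in (\textbf{H6}). The paper instead uses the (weaker but still true) $q_n$-separation, bounds the double block sum by $2\sum_{s\ge q_n}|\text{Cov}(U_1,U_s)|$, and concludes $o(1)$ directly from the same Theorem~\ref{variance} covariance estimate; it does not invoke (\textbf{H6})-2 here at all, reserving that condition for the asymptotic-independence step \eqref{independent} for the big blocks. Your route can be made to work, but note that the bookkeeping does not produce the extra factor $1/n$ you wrote: after stationarity the $1/n$ outside cancels against the at-most-$n$ choices for the first index, leaving a bound proportional to $h_n^{-a}\sum_{i\ge p_n}\text{Cov}(X_1,X_i)$ with no $1/n$. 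So matching it to (\textbf{H6}) as literally stated requires extra care, whereas the paper's argument sidesteps this entirely by reusing the already-established $o(1)$ covariance sum.
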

	
	\begin{proof}[Proof of Lemma \protect\ref{lemma1}]
		From hypothesis (\textbf{H6}), we can see that  
		\begin{equation}
			\frac{q_{n}}{p_{n}}\rightarrow 0,\frac{p_{n}}{n}\rightarrow 0,\frac{\tau _{n}%
			}{nh_{n}}\rightarrow 0,\frac{p_{n}}{\sqrt{nh_{n}}}\rightarrow 0.
			\label{conc}
		\end{equation}
		
		\textbf{Proof of i)}: The stationarity leads to
		
		\begin{eqnarray}  \label{decom}
			\text{Var}\left( \zeta _{1}\right) &=&q_{n}\text{Var}\left( U_{1}\right)
			+2\sum\limits_{1\leq i<j\leq q_{n}}\left\vert \text{Cov}\left( U_{i},U_{j}\right)
			\right\vert  \notag \\
			&:&=\mathcal{T}_{n,1}+\mathcal{T}_{n,2}.
		\end{eqnarray}
		
		We deal with $\mathcal{T}_{n,1}$ and $\mathcal{T}_{n,2}$ separately. 
		From the previous analysis, we can see that  
		\begin{eqnarray}  \label{relation1}
			\text{Var}\left( U_{1}\right) &=&h_{n}^{2\beta -2}\text{Var}\left( T_{h_{n},1}\right) 
			\notag \\
			&=&h_{n}^{2\beta -2}\left[ \alpha _{1}^{2}\text{Var}\left( \tilde{W}
			_{h_{n},1}\left( x\right) \right) +\alpha _{2}^{2}h_{n}^{2}\text{Var}\left( \tilde{%
				M }_{h_{n},1}\left( x\right) \right) \right.  \notag \\
			&&\left. -2\alpha _{1}\alpha _{2}h_{n}\text{Cov}\left( \tilde{W}
			_{h_{n},1}\left( x\right) ,\tilde{M}_{h_{n},1}\left( x\right) \right) \right]
			\notag \\
			&\rightarrow &\sigma ^{2}\left( x\right) .
		\end{eqnarray}
		
		Concerning $\mathcal{T}_{n,2}$, by stationarity we get  
		\begin{eqnarray}  \label{ralation2}
			\mathcal{T}_{n,2} &=&2\sum\limits_{1\leq i<j\leq q_{n}}\left\vert \text{Cov}\left(
			U_{i},U_{j}\right) \right\vert ,  \notag \\
			&=&2\sum\limits_{l=1}^{q_{n}}\left( q_{n}-l\right) \left\vert \text{Cov}\left(
			U_{1},U_{l}\right) \right\vert ,  \notag \\
			&\leq &2q_{n}\sum\limits_{l=2}^{q_{n}}h_{n}^{2\beta -2}\left\vert \text{Cov}\left(
			\left( \alpha _{1}\tilde{W}_{h_{n},1}\left( x\right) -\alpha _{2}h_{n} 
			\tilde{M}_{h_{n},1}\left( x\right) \right) ,\right. \right.  \notag \\
			&&\left. \left. \left( \alpha _{1}\tilde{W}_{h_{n},l}\left( x\right)
			-\alpha _{2}h_{n}\tilde{M}_{h_{n},l}\left( x\right) \right) \right) \right] ,
			\notag \\
			&=&o\left( q_{n}\right) .
		\end{eqnarray}
		
		Combining the results in (\ref{decom}), (\ref{relation1}), and (\ref%
		{ralation2}) we get  
		\begin{equation}  \label{pn}
			\text{Var}\left( \zeta _{1}\right) =q_{n}\sigma ^{2}\left( x\right) \left(
			1+o\left( 1\right) \right) .
		\end{equation}
		
		Thus 
		\begin{equation*}
			\frac{\tau _{n}}{n}\text{Var}\left( \zeta _{1}\right) =\frac{\tau _{n}}{n}
			q_{n}\sigma ^{2}\left( x\right) \left( 1+o\left( 1\right) \right) \sim \frac{
				q_{n}}{q_{n}+p_{n}}\sim \frac{q_{n}}{p_{n}}.
		\end{equation*}
		
		The proof follows from (\ref{conc}).
		
		\textbf{Proof of ii): }
		
		At this level, we consider $\gamma _{s}=s\left( q_{n}+p_{n}\right) +q_{n}$. 
		Hence, we have
		
		\begin{equation*}
			\frac{1}{n}\sum\limits_{1\leq i<j\leq \tau _{n}}\text{Cov}\left( \zeta _{i},\zeta
			_{j}\right) =\frac{1}{n}\sum\limits_{1\leq i<j\leq \tau
				_{n}}\sum\limits_{r_{1}=1}^{q_{n}}\sum\limits_{r_{2}=1}^{q_{n}}\text{Cov}\left\{
			U_{\gamma _{i}+r_{1}},U_{\gamma _{j}+r_{2}}\right\} .
		\end{equation*}
		
		It is may easy to see that $\left\vert \gamma _{i}-\gamma
		_{j}+r_{1}+r_{2}\right\vert \geq q_{n}$ since $j\neq i$. Hence 
		\begin{equation*}
			\frac{1}{n}\sum\limits_{1\leq i<j\leq q_{n}}\left\vert \text{Cov}\left( \zeta
			_{i},\zeta _{j}\right) \right\vert \leq \frac{2}{n}\sum\limits_{r_{1}=1}^{n-%
				\tau _{n}}\sum\limits_{r_{2}=r_{1}+q_{n}}^{n}\left\vert \text{Cov}\left\{
			U_{r_{1}},U_{r_{2}}\right\} \right\vert .
		\end{equation*}
		
		By stationarity, we can see that  
		\begin{eqnarray*}
			\frac{1}{n}\sum\limits_{1\leq i<j\leq q_{n}}\left\vert \text{Cov}\left( \zeta
			_{i},\zeta _{j}\right) \right\vert &\leq
			&2\sum\limits_{s=q_{n}}^{n}\text{Cov}\left\{ U_{1},U_{s}\right\} . \\
			&=&o\left( 1\right) .
		\end{eqnarray*}
		
		\textbf{Proof of iii):} Again by stationarity, we have 
		\begin{equation*}
			\text{Var}\left( \frac{1}{\sqrt{n}}I_{n,2}\right) =\frac{\tau _{n}}{n}\text{Var}\left(
			\zeta _{1}\right) +\frac{2}{n}\sum\limits_{1\leq i<j\leq \tau
				_{n}}\left\vert \text{Cov}\left( \zeta _{i},\zeta _{j}\right) \right\vert .
		\end{equation*}
		
		Then, the proof finishes using items i) and ii).
		
		\textbf{Proof of vi): }By analogous argument, we find  
		\begin{eqnarray*}
			\text{Var}\left( \frac{1}{\sqrt{n}}I_{n,3}\right) &=&\frac{1}{n}\text{Var}\left(
			\sum\limits_{i=\tau _{n}\left( p_{n}+q_{n}\right) +1}^{n}U_{i}\right) \\
			&=&\frac{n-\tau _{n}\left( p_{n}+q_{n}\right) }{n}\text{Var}\left( U_{1}\right) + 
			\frac{2}{n}\sum\limits_{\tau _{n}\left( p_{n}+q_{n}\right) +1\leq i<j\leq
				n}\left\vert \text{Cov}\left( U_{i},U_{j}\right) \right\vert . \\
			&=&\frac{n-\tau _{n}\left( p_{n}+q_{n}\right) }{n}\text{Var}\left( U_{1}\right) + 
			\frac{2}{n}\sum\limits_{1\leq i<j\leq n-\tau _{n}\left( p_{n}+q_{n}\right)
			}\left\vert \text{Cov}\left( U_{i},U_{j}\right) \right\vert . \\
			&:&=\mathcal{F}_{n,1}+\mathcal{F}_{n,2}.
		\end{eqnarray*}
		
		First, simple calculation together with (\ref{conc}) lead to 
		\begin{eqnarray*}
			\mathcal{F}_{n,1} &=&\frac{n-\tau _{n}\left( p_{n}+q_{n}\right) }{n}%
			\text{Var}\left( U_{1}\right) \\
			&\leq &\frac{p_{n}}{n}\sigma ^{2}\left( x\right) \\
			&=&o\left( 1\right) .
		\end{eqnarray*}
		
		For $F_{n,2}$, we can write  
		\begin{eqnarray*}
			\mathcal{F}_{n,2} &=&\frac{2}{n}\sum\limits_{1\leq i<j\leq n-\tau _{n}\left(
				p_{n}+q_{n}\right) }\left\vert \text{Cov}\left( U_{i},U_{j}\right) \right\vert \\
			&\leq &\frac{2}{n}\sum\limits_{1\leq i<j\leq p_{n}}\left\vert \text{Cov}\left(
			U_{i},U_{j}\right) \right\vert \\
			&\leq &\frac{2p_{n}}{n}\sum\limits_{s=1}^{p_{n}-1}\left\vert \text{Cov}\left(
			U_{1},U_{1+s}\right) \right\vert \\
			&\leq &\frac{2p_{n}}{n}\sum\limits_{s=1}^{n}\left\vert \text{Cov}\left(
			U_{1},U_{1+s}\right) \right\vert
		\end{eqnarray*}
		
		By a similar argument we find, using (\ref{conc}):  
		\begin{equation*}
			\mathcal{F}_{n,2}=o\left( 1\right) .
		\end{equation*}
		
		This ends simultaneously the proofs of Lemma \ref{lemma1} and that of (\ref%
		{negligible}).
	\end{proof}
	
	We now proceed to establish the proof of (\ref{normality}). For this, we
	give the following assertions%
	\begin{equation}
		\frac{1}{n}\sum\limits_{s=1}^{\tau _{n}}E\left[ \eta _{s}^{2}\right]
		\rightarrow \sigma ^{2}\left( x\right) ,  \label{Lindeberg1}
	\end{equation}%
	\begin{equation}
		\left\vert E\left[ e^{ \frac{it}{\sqrt{n}}I_{n,1}} \right]
		-\prod\limits_{s=1}^{\tau _{n}}E\left[ e^ {\frac{it}{\sqrt{n}}\eta
			_{s}} \right] \right\vert \rightarrow 0,  \label{independent}
	\end{equation}
	
	\begin{equation}
		\frac{1}{n}\sum\limits_{s=1}^{\tau _{n}}E\left[ \eta _{s}^{2}1\left\{
		\left\vert \eta _{s}^{2}\right\vert >\varepsilon \sqrt{n}\sigma \left(
		x\right) \right\} \right] \rightarrow 0.  \label{Lindeberg2}
	\end{equation}
	
	Assertion (\ref{Lindeberg1}) establishes the explicit expression for the
	limit variance of the leading term $I_{n,1}$. Point (\ref{independent})
	demonstrates the asymptotic independence of the r.v.'s $\left\{ \eta
	_{s}\right\} $ composing the sum $I_{n,1}$ using the characteristic function
	criterion. Assurtion (\ref{Lindeberg2}) presents the
	Lindeberg-Feller condition of the asymptotic normality under independence. 
	
	First, we proceed to prove assertion (\ref{Lindeberg1}). Lemma \ref{lemma2} hereinafter
	is in order:
	
	\begin{lemma}
		\label{lemma2} Under Conditions (\textbf{H1}), (\textbf{H3})--(\textbf{H4}), (\textbf{H5})-2, and (\textbf{H6}), for sufficiently large \( n \), we have:

		i) $\frac{\tau _{n}}{n}\text{Var}\left( \eta _{1}\right) \rightarrow \sigma 
		^{2}\left( x\right) ,$
		
		ii)$\frac{1}{n}\sum\limits_{1\leq i<j\leq \tau _{n}}\text{Cov}\left( \eta _{i},\eta
		_{j}\right) \rightarrow 0,$
		
		iii) $\text{Var}\left( \frac{1}{\sqrt{n}}I_{1,n}\right) \rightarrow \sigma 
		^{2}\left( x\right) .$
	\end{lemma}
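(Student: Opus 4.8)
The plan is to mirror the proof of Lemma \ref{lemma1}, since the large blocks $\eta_s$ play here exactly the role played there by the small blocks $\zeta_s$; the only structural difference is that the normalizing ratio $\tau_n p_n/n$ now tends to $1$ rather than to $0$, and this is precisely what produces the nonzero limit $\sigma^2(x)$. Throughout I would reuse the relation \eqref{relation1} (which gives $\text{Var}(U_1)\to\sigma^2(x)$), the within-block covariance estimate \eqref{ralation2}, the consequences \eqref{conc} of (\textbf{H6}), the sup-norm derivative bound \eqref{H}, and the two structural inputs Lemma \ref{Berkel} (Birkel) and Lemma \ref{prop2}, together with $\text{Cov}(Y_i,Y_j)=\text{Cov}(X_i,X_j)$ as established in the proof of Theorem \ref{variance}.

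For part i) I would first use stationarity to write, in direct analogy with the decomposition \eqref{decom},
\[
\text{Var}(\eta_1)=p_n\,\text{Var}(U_1)+2\sum_{1\le i<j\le p_n}\lvert\text{Cov}(U_i,U_j)\rvert .
\]
The first summand is handled by \eqref{relation1}, giving $\text{Var}(U_1)\to\sigma^2(x)$. For the double sum I would repeat the estimate \eqref{ralation2} with $q_n$ replaced by $p_n$: by stationarity it equals $2\sum_{l=1}^{p_n}(p_n-l)\lvert\text{Cov}(U_1,U_{1+l})\rvert$, and bounding each covariance via Birkel's inequality together with \eqref{H} reduces it to a multiple of a tail of $\sum_l \text{Cov}(X_1,X_{1+l})$, which the argument of \eqref{ralation2} shows to be $o(p_n)$ under (\textbf{H5})-2. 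Hence $\text{Var}(\eta_1)=p_n\sigma^2(x)(1+o(1))$, and therefore
\[
\frac{\tau_n}{n}\text{Var}(\eta_1)=\frac{\tau_n p_n}{n}\,\sigma^2(x)(1+o(1))\sim\frac{p_n}{p_n+q_n}\,\sigma^2(x)\longrightarrow\sigma^2(x),
\]
the last step using $q_n/p_n\to0$ and $\tau_n(p_n+q_n)/n\to1$ from \eqref{conc}. This is exactly where the large-block computation departs from Lemma \ref{lemma1}, whose corresponding ratio was $q_n/(p_n+q_n)\to0$.

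For part ii) I would exploit the fact that two distinct large blocks are always separated by an interposed small block, so any index of $\eta_i$ and any index of $\eta_j$ (with $i\ne j$) differ by at least $q_n$. Expanding $\text{Cov}(\eta_i,\eta_j)$ into its entries, using stationarity and collecting by lag, therefore yields
\[
\frac1n\Bigl\lvert\sum_{1\le i<j\le\tau_n}\text{Cov}(\eta_i,\eta_j)\Bigr\rvert\le 2\sum_{s\ge q_n}\lvert\text{Cov}(U_1,U_{1+s})\rvert ,
\]
which is the tail of a convergent series (again by Birkel plus \eqref{H} plus (\textbf{H5})-2) and hence vanishes as $n\to\infty$; this step is identical to part ii) of Lemma \ref{lemma1}. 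Part iii) is then immediate, since by stationarity $\text{Var}\bigl(\tfrac1{\sqrt n}I_{1,n}\bigr)=\tfrac{\tau_n}{n}\text{Var}(\eta_1)+\tfrac2n\sum_{1\le i<j\le\tau_n}\text{Cov}(\eta_i,\eta_j)$, so i) and ii) combine to give the limit $\sigma^2(x)$.

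The main obstacle is the covariance bookkeeping shared by parts i) and ii): one must verify that, after the scaling built into $U_i=h_n^{\beta-1}T_{h_n,i}(x)$, the Birkel bound $\lvert\text{Cov}(U_1,U_{1+s})\rvert\le C\,h_n^{-m}\,\text{Cov}(X_1,X_{1+s})$ (obtained from \eqref{H} for a suitable exponent $m$) still leaves a summable series and a vanishing tail. Controlling these $h_n$-powers against the covariance decay is exactly what the weighted summability in (\textbf{H5})-2 and the bandwidth--covariance condition in (\textbf{H6}) are designed for; matching the powers carefully, rather than any conceptual difficulty, is the delicate part of the argument, and it is the same computation that already underlies \eqref{ralation2} and part ii) of Lemma \ref{lemma1}.
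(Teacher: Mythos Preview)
Your proposal is correct and follows essentially the same route as the paper: for i) decompose $\text{Var}(\eta_1)$ exactly as in \eqref{decom} with $p_n$ in place of $q_n$, invoke \eqref{relation1} and the estimate \eqref{ralation2} to obtain $\text{Var}(\eta_1)=p_n\sigma^2(x)(1+o(1))$, and conclude via $\tau_n p_n/n\to1$; for ii) reduce the cross-block covariances to a tail $\sum_{s}|\text{Cov}(U_1,U_s)|$ and then combine i) and ii) for iii). The only cosmetic discrepancy is that the paper records the tail in ii) as starting at $p_n$ rather than your $q_n$, but either choice vanishes under (\textbf{H5})-2 and (\textbf{H6}).
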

	
	\begin{proof}[Proof of Lemma \protect\ref{lemma2}]
		\textbf{Proof of i)}: By means of (\ref{variables}) and stationarity, we
		have 
		\begin{eqnarray*}
			\frac{\tau _{n}}{n}\text{Var}\left( \eta _{1}\right)  &=&\frac{\tau _{n}}{n}%
			\text{Var}\left( \sum\limits_{i=1}^{p_{n}}U_{i}\right)  \\
			&=&\frac{\tau _{n}p_{n}}{n}\text{Var}\left( U_{1}\right) +\frac{2\tau _{n}}{n}%
			\sum\limits_{1\leq i<j\leq p_{n}}\left\vert \text{Cov}\left( U_{i},U_{j}\right)
			\right\vert  \\
			&:&=\mathcal{T}_{n,1}^{\prime }+\mathcal{T}_{n,2}^{\prime }.
		\end{eqnarray*}
		
		The calculation of $\mathcal{T}_{n,1}^{\prime }$ and $\mathcal{T}%
		_{n,2}^{\prime }$ with $p_{n}$ terms is identical to that of $\mathcal{T}%
		_{n,1}$ and $\mathcal{T}_{n,2}$ with $q_{n}$ terms in (\ref{decom}) respectively.
		Thus,  $p_{n}$ replacing $q_{n}$ in (\ref{pn}) lead to 
		\begin{equation*}
			\frac{\tau _{n}}{n}\text{Var}\left( \eta _{1}\right) =\frac{\tau _{n}p_{n}}{n}%
			\sigma ^{2}\left( x\right) \left( 1+o\left( 1\right) \right) .
		\end{equation*}%
		The proof follows immediately from the fact that $\frac{\tau _{n}p_{n}}{n}%
		\rightarrow 1$ as $n\rightarrow +\infty $.
		
		\textbf{Proof of ii): } Following the same steps of proving item ii) in Lemma 1, we get 
		
		\begin{eqnarray*}
			\frac{1}{n}\sum\limits_{1\leq i<j\leq \tau _{n}}\left\vert \text{Cov}\left( \eta
			_{i},\eta _{j}\right) \right\vert  &\leq
			&2\sum\limits_{s=p_{n}}^{n}\text{Cov}\left\{ U_{1},U_{s}\right\} . \\
			&=&o\left( 1\right) .
		\end{eqnarray*}
		
		\textbf{Proof of iii)}: By stationarity, we can write%
		\begin{equation*}
			\text{Var}\left( \frac{1}{\sqrt{n}}I_{n,1}\right) =\frac{\tau _{n}}{n}\text{Var}\left(
			\eta _{1}\right) +\frac{2}{n}\sum\limits_{1\leq i<j\leq \tau _{n}}\left\vert
			\text{Cov}\left( \eta _{i},\eta _{j}\right) \right\vert 
		\end{equation*}
		
		The result follows using items i) and ii).
	\end{proof}
	
	Next, we move to prove (\ref{independent}) using the fact that $\left\{
	Y_{i}\right\} _{i\geq 1}$ is an associated random process. The strategy here
	is to use Lemma  \ref{Berkel} to bound the left-hand side of (\ref{independent}) in terms of
	the covariance sequences of the process $\left\{ X_{i}\right\} _{i\geq 1}$.
	Let us define 
	\[
	I_{\tau _{n}}\left( t\right) :=\left\vert E\left[ e^{\frac{it}{\sqrt{n}}%
		I_{n,1}}\right] -\prod\limits_{s=1}^{\tau _{n}}E\left[ e^{\frac{it}{\sqrt{n}}%
		\eta _{s}}\right] \right\vert .
	\]
	
	We proceed as follows
	
	\begin{eqnarray*}
		I_{\tau _{n}}\left( t\right)  &\leq &\left\vert E\left[ e^{it\sqrt{n}%
			\sum\limits_{s=1}^{\tau }\eta _{s}}\right] -E\left[ e^{it\sqrt{n}%
			\sum\limits_{s=1}^{\tau _{n}-1}\eta _{s}}\right] E\left[ e^{it\sqrt{n}\eta
			_{\tau _{n}}}\right] \right\vert  \\
		&&+\left\vert E\left[ e^{it\sqrt{n}\sum\limits_{s=1}^{\tau _{n}-1}\eta _{s}}%
		\right] E\left[ e^{it\sqrt{n}\eta _{\tau _{n}}}\right] -\prod\limits_{s=1}^{%
			\tau _{n}}E\left[ e^{it\sqrt{n}\eta _{s}}\right] \right\vert 
	\end{eqnarray*}%
	Thus, it is clearly to see that 
	\[
	I_{\tau _{n}}\left( t\right) \leq \left\vert \text{Cov}\left( e^{it\sqrt{n}%
		\sum\limits_{s=1}^{\tau _{n}-1}\eta _{s}},e^{it\sqrt{n}\eta _{\tau
			_{n}}}\right) \right\vert +I_{\tau _{n}-1}\left( t\right) .
	\]%
	We repeat this recursive process for the term $I_{\tau _{n}-1}\left(
	t\right) $ and so on, we get 
	\begin{equation}
		I_{\tau _{n}}\left( t\right) \leq \sum\limits_{l=1}^{\tau _{n}-l}\left\vert
		\text{Cov}\left( e^{it\sqrt{n}\sum\limits_{s=1}^{\tau _{n}-l}\eta _{s}},e^{it\sqrt{n%
			}\eta _{\tau _{n}-l+1}}\right) \right\vert .  \label{sum1}
	\end{equation}
	
	Using the large block sets $L_{s}$ in (\ref{large}), the right-hand side of (%
	\ref{sum1}) becomes 
	\[
	\sum\limits_{l=1}^{\tau _{n}}\left\vert \text{Cov}\left( e^{\left( \frac{it}{\sqrt{n%
		}}\sum\limits_{i\in L_{0}\cup L_{1}...L_{\tau _{n}-1-l}}\Psi \left(
		Y_{i}\right) \right) },e^{\left( \frac{it}{\sqrt{n}}\sum\limits_{i\in
			L_{\tau _{n}-l}}\Psi \left( Y_{i}\right) \right) }\right) \right\vert ,
	\]

	where $\Psi \left( Y_{i}\right) =\left( \alpha _{1}W_{h_{n}}\left( \frac{%
		x-Y_{i}}{h_{n}}\right) -\alpha _{2}h_{n}M_{h_{n}}\left( \frac{x-Y_{i}}{h_{n}}%
	\right) \right) $. From Lemma \ref{prop2}, it seen that%
	\[
	\Psi _{1}\left( Y_{i}\right) :=e^{\frac{it}{\sqrt{n}}\Psi \left(
		Y_{i}\right) }
	\]
	
	has a bounded derivative. Furthermore%
	\begin{eqnarray*}
		\left\Vert \Psi _{1}^{\prime }\left( x\right) \right\Vert _{\infty } &\leq & 
		\frac{\left\vert t\right\vert }{\sqrt{n}}\left[ \alpha _{1}\frac{1}{h_{n}}
		\left\Vert W _{h_{n}}\right\Vert _{\infty }+\alpha _{2}\left\Vert
		M_{h_{n}}\right\Vert _{\infty }\right] \\
		&\leq &\frac{C\left\vert t\right\vert }{h_{n}^{\beta +1}\sqrt{n}}.
	\end{eqnarray*}
	
	Then, by Lemma \ref{Berkel} we get 
	\[
	I_{\tau _{n}}\leq \frac{C\left\vert t\right\vert ^{2}}{nh_{n}^{2\left( \beta
			+1\right) }}\sum\limits_{l=1}^{\tau _{n}}\sum\limits_{i\in L_{0}\cup
		L_{1}\cup ...L_{\tau _{n}-1-l}}\sum\limits_{j\in L_{\tau _{n}-l}}\left\vert
	\text{Cov}\left( Y_{i},Y_{j}\right) \right\vert
	\]%
	The stationarity beside the fact that $\text{Cov}\left( Y_{i},Y_{j}\right)
	=\text{Cov}\left( X_{i},X_{j}\right) $, we get 
	\begin{eqnarray}  \label{fin}
		I_{\tau _{n}}\left( t\right) &\leq &\frac{Ct^{2}}{nh_{n}^{2\left( \beta
				+1\right) }}\left[ \left( \tau _{n}-1\right) \sum\limits_{j\in
			L_{0}}\sum\limits_{i\in L_{1}}\left\vert \text{Cov}\left( X_{i},X_{j}\right)
		\right\vert +\left( \tau _{n}-2\right) \sum\limits_{j\in
			L_{0}}\sum\limits_{i\in L_{2}}\left\vert \text{Cov}\left( X_{i},X_{j}\right)
		\right\vert ..\right.  \nonumber \\
		&&+\left. \sum\limits_{j\in L_{0}}\sum\limits_{i\in L_{\tau _{n}}}\left\vert
		\text{Cov}\left( X_{i},X_{j}\right) \right\vert \right]
	\end{eqnarray}%
	Furthermore, again by stationarity, we have
	
	\begin{eqnarray*}
		\sum\limits_{i\in L_{0}}\sum\limits_{j\in L_{1}}\left\vert \text{Cov}\left(
		X_{i},X_{j}\right) \right\vert
		&=&\sum\limits_{i=1}^{p_{n}}\sum\limits_{j=\left( p_{n}+q_{n}\right)
		}^{\left( p_{n}+q_{n}\right) +p_{n}-1}\left\vert \text{Cov}\left(
		X_{i},X_{j}\right) \right\vert \\
		&=&p_{n}\left\vert \text{Cov}\left( X_{1},X_{p_{n}+q_{n}}\right) \right\vert
		+\left( p_{n}-1\right) \left\vert \text{Cov}\left( X_{1},X_{p_{n}+q_{n}+1}\right)
		\right\vert ... \\
		&&+\left\vert \text{Cov}\left( X_{1},X_{\left( p_{n}+q_{n}\right) +p_{n}-1}\right)
		\right\vert \\
		&=&\sum\limits_{r=1}^{p_{n}}\left( p_{n}-r\right) \left\vert \text{Cov}\left(
		X_{1},X_{p_{n}+q_{n}+r}\right) \right\vert .
	\end{eqnarray*}
	Analogously
	
	\begin{equation*}
		\sum\limits_{j\in L_{0}}\sum\limits_{i\in L_{2}}\left\vert \text{Cov}\left(
		X_{i},X_{j}\right) \right\vert =\sum\limits_{r=1}^{p_{n}}\left(
		p_{n}-r\right) \left\vert \text{Cov}\left( X_{1},X_{2\left( p_{n}+q\right)
			_{n}+r}\right) \right\vert .
	\end{equation*}
	
	Finally
	
	\begin{equation*}
		\sum\limits_{j\in L_{0}}\sum\limits_{i\in T_{\tau _{_{n}}-1}}\left\vert
		\text{Cov}\left( X_{i},X_{j}\right) \right\vert =\sum\limits_{r=1}^{p_{n}}\left(
		p_{n}-r\right) \left\vert \text{Cov}\left( X_{1},X_{\left( \tau _{n}-1\right)
			\left( p_{n}+q\right) _{n}+r}\right) \right\vert .
	\end{equation*}%
	Hence, (\ref{fin}) becomes
	
	\begin{equation*}
		I_{\tau _{n}}\left( t\right) \leq \frac{Ct^{2}}{nh_{n}^{2\left( \beta
				+1\right) }}\sum\limits_{l=0}^{\tau _{n}}\sum\limits_{r=1}^{p_{n}}\left(
		\tau _{n}-l\right) \left( p_{n}-r\right) \left\vert \text{Cov}\left(
		X_{1},X_{l\left( p_{n}+q_{n}\right) +r}\right) \right\vert .
	\end{equation*}%
	We set the following change of index: $i=l\left( p_{n}+q_{n}\right) +r$.
	Hence, we have 
	\begin{equation*}
		I_{\tau _{n}}\left( t\right) \leq \frac{Ct^{2}}{nh_{n}^{2\left( \beta
				+1\right) }}\sum\limits_{r=1}^{p_{n}}\left( p_{n}-r\right)
		\sum\limits_{i=\left( p_{n}+q_{n}\right) +r}^{\tau _{n}\left(
			p_{n}+q_{n}\right) +r}\left( \tau _{n}-\frac{i-r}{\left( p_{n}+q_{n}\right) }%
		\right) \left\vert \text{Cov}\left( X_{1},X_{i}\right) \right\vert .
	\end{equation*}
	
	Next 
	\begin{eqnarray*}
		I_{\tau _{n}}\left( t\right)  &\leq &\frac{Ct^{2}\tau _{n}}{nh_{n}^{2\left(
				\beta +1\right) }}\sum\limits_{r=1}^{p_{n}}\left( p_{n}-r\right)
		\sum\limits_{i=\left( p_{n}+q_{n}\right) +r}^{\tau _{n}\left(
			p_{n}+q_{n}\right) +r}\left\vert \text{Cov}\left( X_{1},X_{i}\right) \right\vert  \\
		&\leq &\frac{Ct^{2}\tau _{n}p_{n}}{nh_{n}^{2\left( \beta +1\right) }}%
		\sum\limits_{i=p_{n}}^{+\infty }\left\vert \text{Cov}\left( X_{1},X_{i}\right)
		\right\vert .
	\end{eqnarray*}
	
	Finally, from the fact that $\frac{\tau _{n}p_{n}}{n}\rightarrow 1$ as $%
	n\rightarrow +\infty $ and Condition (\textbf{H6}), we conclude that 
	\begin{equation*}
		I_{\tau _{n}}\left( t\right) \leq \frac{Ct^{2}}{h_{n}^{2\left( \beta
				+1\right) }}\sum\limits_{i=p_{n}}^{+\infty }\left\vert \text{Cov}\left(
		X_{1},X_{i}\right) \right\vert \rightarrow 0.
	\end{equation*}
	
	The proof of (\ref{independent}) is finished.
	
	Now, we establish (\ref{Lindeberg2}). From Lemma \ref{prop2}, we can see
	that $\left\vert W _{h_{n}}\left( x\right) \right\vert \leq \frac{C}{%
		h_{n}^{\beta }}$ and $\left\vert M_{h_{n}}\left( x\right) \right\vert \leq 
	\frac{C}{h_{n}^{\beta }}$. This leads to%
	\begin{equation*}
		\left\vert \eta _{1}\right\vert \leq \frac{Cp_{n}}{h_{n}}\left( \alpha
		_{1}+h_{n}\alpha _{2}\right) .
	\end{equation*}%
	Then, by Tchebychev's inequality, we have 
	\begin{eqnarray*}
		\frac{\tau _{n}}{n}E\left[ \eta _{1}^{2}1_{\left\{ \left\vert \eta
			_{_{1}}\right\vert >\varepsilon \sqrt{n}\sigma \left( x\right) \right\} }%
		\right]  &\leq &\frac{C\tau _{n}p_{n}^{2}}{h_{n}^{2}n}\left( \alpha
		_{1}+h_{n}\alpha _{2}\right) ^{2}P\left( \left\vert \eta _{_{1}}\right\vert
		>\varepsilon \sqrt{n}\sigma \left( x\right) \right)  \\
		&\leq &\frac{Cp_{n}^{2}}{h_{n}^{2}n}\left( \alpha _{1}+h_{n}\alpha
		_{2}\right) ^{2}\frac{\frac{\tau _{n}}{n}\text{Var}\left( \eta _{_{1}}\right) }{%
			\varepsilon ^{2}\sigma ^{2}\left( x\right) } \\
		&=&\frac{Cp_{n}^{2}}{h_{n}n}\left( \alpha _{1}+h_{n}\alpha _{2}\right) ^{2}%
		\frac{\frac{\tau _{n}}{nh_{n}}\text{Var}\left( \eta _{_{1}}\right) }{\varepsilon
			^{2}\sigma ^{2}\left( x\right) }.
	\end{eqnarray*}
	
	The result follows from (\ref{conc}). This finishes the proof of both (%
	\ref{Lindeberg2}) and $\left( \ref{normality}\right) $.

\end{proof}

%\noindent\textbf{References}
\bibliographystyle{elsarticle-harv}
\bibliography{bib3}

%% else use the following coding to input the bibitems directly in the
%% TeX file.

%\begin{thebibliography}{00}

%% \bibitem[Author(year)]{label}
%% Text of bibliographic item

%\bibitem[ ()]{}

%\end{thebibliography}
\end{document}